\newcommand{\TheTitle}{Optimization with affine homogeneous quadratic integral inequality constraints\footnote{This work was partly supported by the Engineering and Physical Sciences Research Council (EPSRC) grant EP/J010537/1. Email addresses: 
\email{gf910@ic.ac.uk} (Giovanni Fantuzzi), \email{a.wynn@imperial.ac.uk} 
(Andrew Wynn), \email{paul.goulart@eng.ox.ac.uk} (Paul Goulart), 
\email{antonis@eng.ox.ac.uk} (Antonis Papachristodoulou).}}
\title{\center\Large\bf{\TheTitle}}
\author{
\institute{Department of Aeronautics, Imperial College London, London, UK.}
\institute{Department of Engineering Science, University of Oxford, Oxford, UK.}
Giovanni Fantuzzi\footnotemark[2] 
\and
Andrew Wynn\footnotemark[2] \and
Paul Goulart\footnotemark[3] \and
Antonis Papachristodoulou\footnotemark[3]
}
\renewcommand{\vec}[1]{\bm{#1}}
\newcommand{\F}{\vec{F}}
\newcommand{\at}[1]{(#1)}
\newcommand{\der}[1]{\partial^{#1}}
\newcommand{\Leg}{\mathcal{L}}
\newcommand{\norm}[1]{\left\| #1 \right\|}
\newcommand{\dx}{\,{\rm d}x }
\newcommand{\dy}{\,{\rm d}y }
\newcommand{\defeq}{:=}
\theoremstyle{definition}
\newtheorem{assumption}{Assumption} 			
\crefname{assumption}{Assumption}{Assumptions}	
\let\OLDthebibliography\thebibliography
\renewcommand\thebibliography[1]{
  \OLDthebibliography{#1}
  \footnotesize
  \setlength{\parskip}{1pt}
  \setlength{\itemsep}{1pt plus 0.5ex}
}
\begin{document}

\maketitle

\begin{abstract}
We introduce a new technique to optimize a linear cost function subject
to a one-dimensional affine homogeneous quadratic integral inequality, i.e., the
requirement that a homogeneous quadratic integral functional,
affine in the optimization variables, is non-negative over a space of
functions defined by homogeneous boundary conditions. Such problems arise in stability analysis, input-to-state/output analysis, and control of many systems governed by partial differential equations (PDEs), in particular fluid dynamical systems. First, we derive outer approximations for the feasible set of a homogeneous quadratic integral inequality in terms of linear matrix inequalities (LMIs), and show that under mild assumptions a convergent, non-decreasing sequence  of lower bounds for the optimal cost can be computed with a sequence of semidefinite programs (SDPs).  Second, we obtain inner approximations in terms of LMIs and sum-of-squares constraints, so upper bounds for the optimal cost and strictly feasible points for the integral inequality can also be computed with SDPs. To aid the formulation and solution of our SDP relaxations, we implement our techniques in \textsc{QUINOPT}, an open-source add-on to \textsc{YALMIP}. We demonstrate our techniques by solving problems arising from the stability analysis of PDEs.
\end{abstract}

\begin{keywords}
Integral inequalities, semidefinite programming, sum-of-squares optimization, partial differential equations.
\end{keywords}


\section{Introduction}
\label{S:Intro}

Analysis and control of
systems governed by partial differential equations (PDEs) are 
fundamental problems in physics and engineering, but are challenging because the system state is a (vector-valued) 
function $\vec{w}$ of both the time $t$ and the spatial position vector $\vec{x}$, and as such it belongs to an 
infinite-dimensional space (e.g. a Sobolev space). 

In an effort to reduce the conservativeness introduced by finite-dimensional approximations, recent years have seen the development of analytical techniques that consider directly the infinite-dimensional PDEs, and lead to consideration of integral inequalities.
For example, the stability of an equilibrium of a PDE system in a domain $\Omega$, or of a control policy designed to stabilize it,  can be established by constructing a positive integral Lyapunov functional 
$\mathcal{V}(t) = \mathcal{V}\{\vec{w}(t,\cdot)\} = \int_\Omega 
V[\vec{w}(t,\vec{x})] d\vec{x}$ whose time derivative (also an integral quantity) is 
non-positive~\cite{Straughan2004,Valmorbida2014,Valmorbida2016}.
Other input-to-state/output properties such as passivity, reachability, and input-to-state stability can be studied in a similar way using dissipation inequalities for integral functionals of the state 
variable~\cite{Ahmadi2014,Ahmadi2016}. 
Finally, the computational cost of designing optimal control policies for systems with complex dynamics, such as turbulent flows, may be reduced by requiring the control law to minimize an upper bound on the objective function rather than the objective itself~\cite{Huang2015,Huang2015a,Lasagna2016,Lasagna2016a}, and in the case of PDEs such upper bounds can be found by solving suitable integral inequalities~\cite{Constantin1995,Constantin1995a,Doering1992,Doering1994,Doering1996,Hagstrom2014}.

When the underlying PDE system is {\it autonomous}, the integral inequalities obtained in all aforementioned applications depend on time only through the state $\vec{w}(t,\vec{x})$, and since they are imposed {\it pointwise in time}, the time dependence of $\vec{w}$ can be dropped. Checking a certain integral inequality for given system parameters, or alternatively optimizing the system parameters while satisfying an integral inequality, then requires solving optimization problems of the form
\begin{equation}
\label{E:IntroProblem}
\begin{gathered}
\min_{\vec{\gamma}} \quad \vec{c}^T \vec{\gamma} \\
\text{s.t.} \quad \mathcal{F}_{\vec{\gamma}} \{\vec{w}\} \defeq 
\int_{\Omega} F_{\vec{\gamma}}(\vec{x},\mathcal{D}^{\vec{k}}\vec{w}) 
{\rm d}^n\vec{x}
\geq 0,\quad \vec{w} \in H,
\end{gathered}
\end{equation}
where $H$ is a suitable function space, e.g. the 
space of all $\vec{k}$-times differentiable functions from $\Omega \subseteq \mathbb{R}^n$ (typically $n = 3$ for physical systems) to $\mathbb{R}^q$ that satisfy a given set of boundary 
conditions (BCs). The optimization variable $\vec{\gamma}\in\mathbb{R}^s$ represents a vector of tunable system parameters, $\vec{c}\in\mathbb{R}^s$ is the cost vector,  $F_{\vec{\gamma}}(\cdot,\cdot)$ is a function that depends parametrically on $\vec{\gamma}$, and
$\mathcal{D}^{\vec{k}}\vec{w}=[w_1, \partial_{x_1} w_1, \partial_{x_2} w_1,\hdots,\partial^{k_1}_{x_n} w_1,\hdots,\partial^{k_q}_{x_n} w_q]^T$ lists all 
partial derivatives of the components of $\vec{w}$ up to the order specified by the multi-index $\vec{k}=[k_1, \hdots, k_q]$.

When the dependence on $\vec{\gamma}$ is at least affine and strong duality holds, problem~\eqref{E:IntroProblem} could be solved (in principle) by first computing the 
minimizer $\vec{w}^\star$ of $\mathcal{F}_{\vec{\gamma}}$ as a function of 
$\vec{\gamma}$ using the calculus of 
variations~\cite{Courant1953,Giaquinta2004}, and then minimizing the augmented 
Lagrangian $L(\vec{\gamma}) = \vec{c}^T\vec{\gamma} - 
\lambda\mathcal{F}_{\vec{\gamma}}\{\vec{w}^\star\}$, where the 
Lagrange multiplier $\lambda\geq 0$ is chosen to enforce the integral inequality. 
This strategy has been 
successfully applied to some problems in fluid dynamics (see e.g.~\cite{Doering1997,Wen2013,Wen2015a}), but it requires careful, 
problem-dependent computations. 
Alternatively, when the integrand $F_{\vec{\gamma}}(\cdot,\cdot)$ is linear with respect to $\mathcal{D}^{\vec{k}}\vec{w}$ and polynomial in $\vec{x}$,~\eqref{E:IntroProblem} can be transformed into a semidefinite program (SDP) using integration by parts and moment relaxation techniques~\cite{Bertsimas2006}.
More recently, it has been suggested that~\eqref{E:IntroProblem} can be recast as an SDP even when the integrand is 
polynomial in $\mathcal{D}^{\vec{k}}\vec{w}$~\cite{Papachristodoulou2006,Valmorbida2014,Valmorbida2016,Valmorbida2015}:
one relates the derivatives of the components of $\vec{w}$ using integration by parts and algebraic identities, and then requires that the polynomial integrand $F_{\vec{\gamma}}(\vec{x},\mathcal{D}^{\vec{k}}\vec{w})$ admits a sum-of-squares (SOS) decomposition over the domain of integration. However, scalability issues usually prevent the solution of problems of practical interest because---as our examples will demonstrate---high-degree SOS relaxations are normally needed to achieve accurate results.

This paper presents a new approach to solving a class of problems of 
type~\eqref{E:IntroProblem}. We consider homogeneous quadratic functionals $\mathcal{F}_{\vec{\gamma}}$ over a one-dimensional compact domain; in other words, we assume that $\vec{x} \in \Omega \equiv [a,b] \subset \mathbb{R}$ and that the integrand $F_{\vec{\gamma}}(\vec{x},\mathcal{D}^{\vec{k}}\vec{w})$ is a homogeneous quadratic polynomial with respect to $\mathcal{D}^{\vec{k}}\vec{w}$. Inequalities of this type arise in many fluid or thermal convection systems of practical interest (see e.g.~\cite{Straughan2004,Constantin1995a,Doering1994,Doering1996,Ahmadi2015}) and these are the main applications we have in mind. Our techniques, already partially introduced by some of the authors for particular problem instances~\cite{Fantuzzi2015,Fantuzzi2016}, rely on Legendre series expansions to formulate SDPs with better scaling properties than the SOS method of~\cite{Valmorbida2016}.  Our main contributions are:

\begin{enumerate}
\item{For the first time, we formulate convergent {\it outer} approximations of the feasible set of~\eqref{E:IntroProblem} described by linear matrix inequalities (LMIs), so lower bounds for the optimal cost can be computed using SDPs.  }
\item{We extend the method of~\cite{Fantuzzi2015,Fantuzzi2016} to derive SDP-representable {\it inner} approximations for the feasible set of~\eqref{E:IntroProblem} in the general setting, so upper bounds on the optimal value of can also be obtained with semidefinite programming.}
\item{We present \textsc{QUINOPT}, an add-on to \textsc{YALMIP}~\cite{Lofberg2004,Lofberg2009} to aid the formulation of the SDP relaxations outlined above, and use it to solve examples that demonstrate the advantages (and some limitations) of our approach compared to the SOS method of~\cite{Valmorbida2016}.}
\end{enumerate}

The rest of the paper is organized as follows. 
Section~\ref{S:ProblemDef} introduces the class of optimization problems studied in this work; as a motivating example, we consider the stability analysis of a fluid flow driven by a surface stress~\cite{Hagstrom2014}. 
We formulate outer SDP relaxations in \S\ref{S:OuterApprox}, and inner SDP relaxations in \S\ref{S:InnerApprox}.
We remove some simplifying assumptions and further extend our results in \S\ref{S:Extensions}. Section~\ref{S:Results} presents \textsc{QUINOPT} and numerical examples arising from the analysis of PDEs, and we comment on the scalability of our methods in \S\ref{S:scalability}. Finally, \S\ref{S:Conclusion} offers concluding remarks and perspectives for future developments.

\paragraph*{Notation.}

Vectors and matrices are denoted by boldface characters; in particular, $\vec{0}$ denotes the zero vector/matrix.
The usual Euclidean and $\ell^1$ norms of $\vec{v}\in\mathbb{R}^n$ are
$\|\vec{v}\|=(\sum_{i=1}^{n}\vert v_i\vert^2)^{1/2}$ and
$\|\vec{v}\|_1=\sum_{i=1}^{n}\vert v_i\vert$, respectively. 
Given a matrix $\vec{Q}\in\mathbb{R}^{n\times m}$, the Frobenius norm is defined as 
$\|\vec{Q}\|_F=(\sum_{i=1}^{n}\sum_{j=1}^{m}\vert Q_{ij}\vert^2)^{1/2}$.
The range and null space of $\vec{Q}$ are denoted by $\mathcal{R}(\vec{Q})$ and $\mathcal{N}(\vec{Q})$, respectively. We denote the space of $n\times n$ symmetric matrices  by $\mathbb{S}^n$, and indicate that $\vec{Q}\in \mathbb{S}^n$ is positive semidefinite with the notation $\vec{Q}\succeq 0$.

For a compact interval $[a,b]\subset\mathbb{R}$ and a positive integer $q$, $C^m([a,b],\mathbb{R}^q)$ is the space of 
$m$-times continuously differentiable functions with domain $[a,b]$ and values in $\mathbb{R}^q$; we also write $C^m([a,b])$ for $C^m([a,b],\mathbb{R})$. 
Given $u\in C^m([a,b])$, 
$\|u\|_2$ and $\|u\|_\infty$ denote the usual 
$L^2(a,b)$ and $L^\infty(a,b)$ norms, 
\begin{align*}
\|u\|_2&=\left[\int_{a}^{b}\vert u(x) \vert^2 \dx\right]^{1/2}, &
\|u\|_\infty&=\sup_{x\in[a,b]} \vert u(x) \vert.
\end{align*}

The set of non-negative integers is denoted by $\mathbb{N}$, and $\mathbb{N}^q$ is the set of multi-indices of the form $\vec{\alpha}=[\alpha_1,\,\hdots,\,\alpha_q]$. The length of the multi-index $\vec{\alpha}\in\mathbb{N}^q$ is $\vert \vec{\alpha}\vert = \alpha_1 + \cdots+\alpha_q$. 
Given $\vec{w}\in C^m([a,b],\mathbb{R}^q)$ and
$\vec{\alpha},\vec{\beta} \in \mathbb{N}^q$ with  
$\alpha_i\leq\beta_i\leq m$ for all $i\in\{1,\,\hdots,\,q\}$, we define $\vec{\beta}-\vec{\alpha} = [\beta_1-\alpha_1,\,\hdots,\,\beta_q - \alpha_q]\in\mathbb{N}^q$ and we list all multi-index derivatives of order between $\vec{\alpha}$ and $\vec{\beta}$ in the vector
\begin{equation}
\label{E:MultIndDerDef}
\mathcal{D}^{[\vec{\alpha},\vec{\beta}]} \vec{w} \defeq 
\left[
\partial^{\alpha_1} u_1,\, \hdots,\, \partial^{\beta_1} u_1,\,
\partial^{\alpha_2} u_2,\, \hdots,\, \partial^{\beta_2} u_2,\, \hdots, 
\partial^{\beta_q} u_q
\right]^T
\in \mathbb{R}^{q+\vert \vec{\beta} - \vec{\alpha}\vert}.
\end{equation}
We also collect all boundary values of such derivatives in the vector
\begin{equation}
\label{E:BoundaryVector}
\mathcal{B}^{[\vec{\alpha},\vec{\beta}]} \vec{w} \defeq \begin{bmatrix}
\mathcal{D}^{[\vec{\alpha},\vec{\beta}]} \vec{w}\at{a} \\ \mathcal{D}^{[\vec{\alpha},\vec{\beta}]} \vec{w}\at{b}
\end{bmatrix}
\in \mathbb{R}^{2(q+\vert \vec{\beta} - \vec{\alpha}\vert)}.
\end{equation}
To simplify the notation, when $\vec{\alpha} = \vec{0}$ we will write $\mathcal{D}^{\vec{\beta}} \vec{w}$ and $\mathcal{B}^{\vec{\beta}} \vec{w}$ instead of $\mathcal{B}^{[\vec{0},\vec{\beta}]} \vec{w}$ and $\mathcal{D}^{[\vec{0},\vec{\beta}]} \vec{w}$.

Finally, given two scalar functions $f$, $g$ of a scalar variable $N$, we write $f \sim g$ to indicate that $f$ and $g$ are asymptotically equivalent up to multiplication by a positive constant, that is, $\lim_{N\to\infty} f/g = c$ for some positive constant $c$.

\section{Optimization with affine homogeneous quadratic integral inequalities}
\label{S:ProblemDef}

Let $\vec{\gamma}\in\mathbb{R}^s$ be a vector of optimization variables, and consider two integers $m,\,q$ and two multi-indices $\vec{k}=[k_1,\,\hdots,\,k_q],\,\vec{l}=[l_1,\,\hdots,\,l_q]\in\mathbb{N}^q$ such that
\begin{subequations}
\begin{align}
\label{E:klCond1}
1 &\leq k_i \leq m-1, &&i\in\{1,\,\hdots,\,q\},\\
\label{E:klCond2}
k_i &\leq l_i \leq m, &&i\in\{1,\,\hdots,\,q\}.
\end{align}
\end{subequations}
Moreover, let $\vec{F}_0(x),\,\hdots,\,\vec{F}_s(x) \in \mathbb{S}^{q+\vert\vec{k}\vert}$ be symmetric matrices of polynomials of $x$ of degree at most $d_{F}$ and define
\begin{equation}
\label{E:FmatDef}
\vec{F}(x;\vec{\gamma}) \defeq  \vec{F}_0(x) + \sum_{i=1}^{s} \gamma_i \vec{F}_i(x),
\end{equation}
i.e., $\vec{F}(x;\vec{\gamma})$ is a symmetric matrix of polynomials of $x$ of degree at most $d_{F}$, 
the coefficients of which are affine in $\vec{\gamma}$. 

Throughout this paper, we consider linear optimization problems of type~\eqref{E:IntroProblem} subject to \emph{affine homogeneous quadratic integral inequalities}, i.e., problems of the form
\begin{gather}
\label{E:OptProblem}
\min_{\vec{\gamma}}\quad \vec{c}^T \vec{\gamma}	\\
\text{s.t.}\,\, \mathcal{F}_{\vec{\gamma}}\{\vec{w}\} 
\!\defeq\!
\int_{-1}^{1}\!\left(\mathcal{D}^{\vec{k}}\vec{w}\right)^T\!\vec{F}(x;\vec{\gamma}) \mathcal{D}^{\vec{k}}\vec{w} \dx \geq 0,
\, \vec{w} \in H,
\nonumber
\end{gather}
where $\vec{c}\in \mathbb{R}^s$ is the cost vector, $\vec{F}(x;\vec{\gamma})$ is as in~\eqref{E:FmatDef}, and
\begin{equation}
\label{E:HspaceDef}
H \defeq \left\{ \vec{w}\in C^m\left([-1,1],\mathbb{R}^q\right):\,\, \vec{A} \mathcal{B}^{\vec{l}}\vec{w} = \vec{0} \right\}
\end{equation}
is the space of $m$-times continuously differentiable functions satisfying the $p$ homogeneous BCs defined by the matrix $\vec{A}\in\mathbb{R}^{p\times2(q+\vert \vec{l}\vert)}$. 
There is no loss of generality in fixing the integration domain for the functional $\mathcal{F}_{\vec{\gamma}}$ to $[-1,1]$ because any compact interval $[a,b]$ can be mapped to it with a change of integration variable.
An affine homogeneous quadratic integral inequality represents a convex constraint on $\vec{\gamma}$, which makes~\eqref{E:OptProblem} a convex optimization problem.

\begin{remark}
For the sake of generality, we allow the space $H$ to be defined by derivatives of higher order than those appearing in $\mathcal{F}_{\vec{\gamma}}\{\vec{w}\}$ (this can always be achieved by adding zero columns to $\vec{A}$). In the applications we have in mind, i.e., problems arising from the study of autonomous PDEs, this is not uncommon:  $H$ encodes the BCs of the solution of a PDE, which might involve all derivatives up to the order of the PDE; $\mathcal{F}_{\vec{\gamma}}\{\vec{w}\}$, instead, is typically derived from a weak formulation of the PDE, after integrating some terms by parts.
\end{remark}

\begin{assumption}
To ease the exposition, we only consider two-dimensional functions $\vec{w}=[u,v]^T\in C^m([-1,1],\mathbb{R}^2)$. We also restrict the attention to the uniform  multi-indices $\vec{k}=[k,k]$ and $\vec{l}=[l,l]$, where $k$ and $l$ satisfy~\eqref{E:klCond1} and~\eqref{E:klCond2}. As will be discussed in \S\ref{S:Extensions}, however, all our results hold for the general case.
\end{assumption}

\subsection{Motivating Example}
\label{S:MotivatingExample}

\begin{figure}[b]
\centering
\begin{tikzpicture}[scale=1.25]
\draw[black, very thick] (-3,0) -- (3,0);
\draw[black, very thick, dashed] (-3.5,0) -- (3,0);
\draw[black, very thick, dashed] (3,0) -- (3.5,0);
\draw[black, very thick] (-3,1.5) -- (3,1.5);
\draw[black, very thick, dashed] (-3.5,1.5) -- (-3,1.5);
\draw[black, very thick, dashed] (3,1.5) -- (3.5,1.5);
\draw[black, thick, ->] (-3.5,.75) -- (3.5,.75) node[anchor=west] {\small$x$};
\draw[black, thick, ->] (-3,-0.3) -- (-3,1.75) node[anchor=south] {\small$y$};
\draw[black, very thick] (-1,1.65) -- (1,1.65) -- (0.75,1.9);
\node[anchor=south east] at (-3,0) {\small$-1$};
\node[anchor=north east] at (-3,1.5) {\small$1$};
\node[anchor=south] at (0,1.65) {\small$0.5\gamma$};
\fill[pattern=north east lines, pattern color=black] (-3,0) rectangle (3,-0.25);
\end{tikzpicture}
\caption{Sketch of the flow setup in our motivating example. The two-dimensional fluid layer extends to infinity along the $x$ direction, is bounded at $y=-1$ by a solid boundary and is driven at the surface ($y=1$) by a shear stress of non-dimensional magnitude $0.5\gamma$.}
\label{F:ShearFlowFig}
\end{figure}
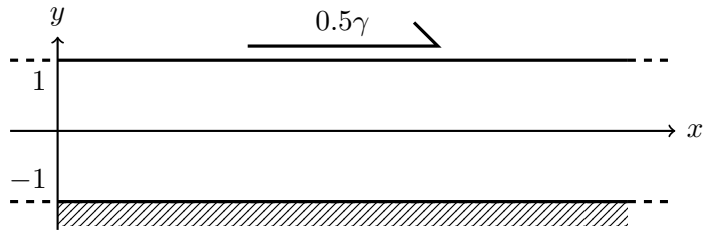

Consider a two-dimensional infinite layer of fluid bounded at $y=-1$ by a solid wall and driven at the surface at $y=1$ by a horizontal shear stress of non-dimensional magnitude $0.5\gamma$, as shown in Figure~\ref{F:ShearFlowFig}. The flow is governed by the incompressible Navier--Stokes equations, and admits a steady (i.e., time independent) solution in which the flow moves horizontally with velocity $\vec{w}_0 = (u_0,v_0)=(0.5\gamma y+0.5\gamma,0)$; see for example~\cite{Tang2004,Hagstrom2014,Fantuzzi2016}.  This steady flow is stable when the driving stress is small. The critical value $\gamma_\mathrm{cr}$ at which the steady flow is no longer guaranteed to be stable with respect to a sinusoidal perturbation $\vec{w}(y)e^{i\xi x + \sigma t}$ --- where $\vec{w}(y)=[u(y),v(y)]^T$ is the amplitude and $\xi$ is the wave number ---  is given by the solution of the optimization problem
\begin{equation}
\label{E:ProbEx2}
\begin{gathered}
\min \quad -\gamma\\
\begin{aligned}
\text{s.t.}\quad  \mathcal{F}_{\vec{\gamma}}\{\vec{w}\}\defeq
\int_{-1}^{1} 
&\left\{
\frac{16}{\xi^2}[ (\partial_y^2u)^2 + (\partial_y^2v)^2] + 8[(\partial_y u)^2 
\right. \\
&\,\left.
+ (\partial_y v)^2] 
+ \xi^2(u^2+v^2) + \frac{2\gamma}{\xi}(v\partial_y u - u\partial_y v)
\right\} \dy \geq 0,
\end{aligned}
\end{gathered}
\end{equation}
where the integral inequality constraint should hold for all functions $u,v\in C^2([-1,1])$ satisfying the homogeneous BCs
\begin{equation}
\label{E:Prob2BCs}
\begin{aligned}
u\at{-1} &= u\at{1} = \partial_y u\at{-1} = \partial_y^2 u\at{1}=0, \\
v\at{-1} &=  v\at{1} = \,\partial_y v\at{-1} = \partial_y^2 v\at{1}=0.
\end{aligned}
\end{equation}
See~\cite{Tang2004,Hagstrom2014} for a detailed discussion. The constraint in~\eqref{E:ProbEx2} can be rewritten in matrix form as in~\eqref{E:OptProblem} with $\vec{k}=\vec{l}=[2,2]$ and
\begin{align*}
\mathcal{D}^{\vec{k}} \vec{w} &= \begin{bmatrix}
u \\ \partial_y u \\ \partial_y^2 u \\  v \\ \partial_y v \\ \partial_y^2 v
\end{bmatrix},
&
\vec{F}(x;\vec{\gamma}) &= 
\begin{bmatrix}
\xi^2  & 0 & 0 & 0 & -\frac{\gamma}{\xi} & 0 \\
     0 & 8 & 0 & \frac{\gamma}{\xi} & 0 & 0 \\
     0 & 0 & \frac{16}{\xi^2}&  0 & 0 & 0 \\
     0 & \frac{\gamma}{\xi} & 0 & \xi^2  & 0 & 0 \\
     -\frac{\gamma}{\xi} & 0 & 0 &0 & 8 & 0 \\
     0 & 0 & 0 & 0 & 0 & \frac{16}{\xi^2}
\end{bmatrix}.
\end{align*}
Note that the matrix $\vec{F}$ above can be written in the form~\eqref{E:FmatDef} with $s=1$.
The reader can easily verify that the BCs on $u$ and $v$ can also be rewritten in the matrix form $\vec{A}\,\mathcal{B}^{\vec{l}}\vec{w} = \vec{0}$ with $\vec{A}\in \mathbb{R}^{8\times 12}$; we omit the details for brevity.
For this problem, it is clear that $\mathcal{F}_{\vec{\gamma}}\{\vec{w}\}\geq 0$ for $\gamma=0$, and that definiteness is lost for sufficiently large $\gamma$. However, the interaction of the BCs with this behavior makes the problem interesting and non-trivial to solve. We will compute upper and lower bounds for the optimal $\gamma$ in~\eqref{E:ProbEx2} in \S\ref{S:ShearFlowEx}.
%

\section{Outer SDP relaxations}
\label{S:OuterApprox}

Our first approach to solve~\eqref{E:OptProblem} is to derive a sequence of \emph{outer} approximations for its feasible set, defined as
\begin{equation}
\label{E:FeasSetDef}
T \defeq \left\{ \vec{\gamma} \in \mathbb{R}^s :\,\,\forall \vec{w}\in H,\,\, \mathcal{F}_{\vec{\gamma}}\{\vec{w}\}  \geq 0 \right\}.
\end{equation}
In other words, we look for a family of sets $\{T^\mathrm{out}_N\}_{N\geq 0}$ such that $T \subset T^\mathrm{out}_N$. Optimizing the cost function over $T^\mathrm{out}_N$ then gives a lower bound for the optimal value of~\eqref{E:OptProblem}. 

The outer approximation set $T^\mathrm{out}_N$ can be found by 
considering a polynomial truncation of $\vec{w}\in H$ of degree $N$. In particular, suppose that
\begin{equation}
\label{E:SNdef}
\vec{w} = 
[u,v]^T
\in S_N \defeq H \cap \left( \mathcal{P}_N \times \mathcal{P}_N \right) \subset H,
\end{equation}
where $\mathcal{P}_N$ is the set of polynomials of degree less than or equal to $N$ on $[-1,1]$.
Note that $S_N$ is non-empty for any degree bound $N$ because $H$ contains the zero polynomial, and it contains nonzero elements if $N$ is large enough to guarantee  sufficient degrees of freedom to satisfy the BCs prescribed on $H$ in~\eqref{E:HspaceDef}. Finally, $S_N \subset S_{N+1}$ because $\mathcal{P}_N \subset \mathcal{P}_{N+1}$.

Now, let $\hat{u}_0,\,\hdots,\,\hat{u}_N$ and $\hat{v}_0,\,\hdots,\,\hat{v}_N$ be the coefficients representing the polynomials $u$ and $v$ in any chosen basis for $\mathcal{P}_N$, and define $\vec{\varphi}_N\defeq \left[
\hat{u}_0,\, \hdots,\, \hat{u}_N,\,\hat{v}_0,\, \hdots,\, \hat{v}_N
\right]^T$. Since $\mathcal{F}_{\vec{\gamma}}$ in~\eqref{E:OptProblem} is quadratic and the constraints imposed on $H$ are linear, it is clear that there exist a matrix $\vec{Q}_N(\vec{\gamma})$, affine in $\vec{\gamma}$, such that
\begin{equation*}
\mathcal{F}_{\vec{\gamma}}\{\vec{w}\} = {\vec{\varphi}_N}^T \vec{Q}_N(\vec{\gamma})\vec{\varphi}_N,
\end{equation*}
and a matrix $\vec{A}_N$ such that
\begin{equation*}
\vec{w} \in S_N \Leftrightarrow \vec{A}_N \vec{\varphi}_N = \vec{0}.
\end{equation*}
Upon selecting a matrix $\vec{\Pi}_N$ satisfying $\mathcal{R}(\vec{\Pi}_N)=\mathcal{N}(\vec{A}_N)$, it follows that
\begin{equation}
T^\mathrm{out}_N 
\defeq \left\{ \vec{\gamma} \in \mathbb{R}^s :\,\,\forall \vec{w}\in S_N,\,\, \mathcal{F}_{\vec{\gamma}}\{\vec{w}\}  \geq 0 \right\}
=\left\{ \vec{\gamma} \in \mathbb{R}^s :\,\, {\vec{\Pi}_N}^T  \vec{Q}_N(\vec{\gamma})\vec{\Pi}_N \succeq 0\right\},
\label{E:OuterApproxSet}
\end{equation}
and since $S_N \subset S_{N+1} \subset H$, the feasible set $T$ of~\eqref{E:OptProblem}, defined as in~\eqref{E:FeasSetDef}, satisfies
\begin{equation*}
T \subset T^\mathrm{out}_{N+1} \subset T^\mathrm{out}_N, \quad N\in\mathbb N.
\end{equation*}

This suggests that a sequence of lower bounds on the optimal value of~\eqref{E:OptProblem} can be found by solving a series of truncated optimization problems.
\begin{theorem}
\label{T:ConvergenceOuterSDPrelax}
Let $p^*$ be the optimal value of~\eqref{E:OptProblem} and, for each integer $N$, let $p_N^*$ be the optimal value of the SDP
\begin{equation}
\label{E:OuterSDP}
\begin{aligned}
\min_{\vec{\gamma}} \quad & \vec{c}^T\vec{\gamma}	\\
\text{s.t.} \quad & \vec{\Pi}_N^T\,\vec{Q}_{N}(\vec{\gamma})\,\vec{\Pi}_N \succeq 0.
\end{aligned}
\end{equation}
Then, $\{p^\star_N\}_{N\geq 0}$ is a non-decreasing sequence of lower bounds for $p^\star$. Furthermore, if a minimizer $\gamma^\star$ exists in~\eqref{E:OptProblem}, then  $\displaystyle\lim_{N\to \infty} \vert p^\star_N - p^\star \vert = 0 $.
\end{theorem}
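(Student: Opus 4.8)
The plan is to establish the two assertions separately: monotonicity and lower-bound property are essentially immediate from the set inclusions already derived, while convergence requires an approximation argument. For the first part, since $S_N \subset S_{N+1} \subset H$, we have $T \subset T^{\mathrm{out}}_{N+1} \subset T^{\mathrm{out}}_N$ as noted in the excerpt, and because the feasible set of the SDP \eqref{E:OuterSDP} is exactly $T^{\mathrm{out}}_N$, minimizing the \emph{same} linear cost $\vec{c}^T\vec{\gamma}$ over progressively smaller sets yields $p^\star_N \leq p^\star_{N+1} \leq p^\star$. This gives the non-decreasing sequence of lower bounds with essentially no work.

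For convergence, suppose a minimizer $\vec{\gamma}^\star$ of \eqref{E:OptProblem} exists. Since $p^\star_N \leq p^\star$ for all $N$ and the sequence is non-decreasing, it converges to some limit $\ell \leq p^\star$; I must show $\ell \geq p^\star$. The idea is to show that any $\vec{\gamma}$ feasible for \emph{all} the relaxations, i.e.\ $\vec{\gamma} \in \bigcap_{N\geq 0} T^{\mathrm{out}}_N$, is in fact feasible for \eqref{E:OptProblem}, i.e.\ $\bigcap_N T^{\mathrm{out}}_N = T$ (the inclusion $\supseteq$ being trivial). Granting this, one argues as follows: let $\vec{\gamma}_N$ be a minimizer of \eqref{E:OuterSDP} (one would first need to argue existence, or work with near-minimizers); the sequence $\{\vec{\gamma}_N\}$ lies in $T^{\mathrm{out}}_0$, and by a coercivity/compactness argument — or by adding a bounding constraint — one extracts a convergent subsequence $\vec{\gamma}_{N_j} \to \bar{\vec{\gamma}}$. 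For fixed $M$, eventually $N_j \geq M$ so $\vec{\gamma}_{N_j} \in T^{\mathrm{out}}_M$, and since $T^{\mathrm{out}}_M$ is closed (it is defined by an LMI in $\vec{\gamma}$), $\bar{\vec{\gamma}} \in T^{\mathrm{out}}_M$; as $M$ was arbitrary, $\bar{\vec{\gamma}} \in \bigcap_M T^{\mathrm{out}}_M = T$. Hence $p^\star \leq \vec{c}^T\bar{\vec{\gamma}} = \lim_j \vec{c}^T \vec{\gamma}_{N_j} = \ell$, giving $\ell = p^\star$.

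The crux is therefore the claim $\bigcap_{N} T^{\mathrm{out}}_N \subseteq T$: if $\mathcal{F}_{\vec{\gamma}}\{\vec{w}\} \geq 0$ for every polynomial $\vec{w} \in S_N$ and every $N$, then $\mathcal{F}_{\vec{\gamma}}\{\vec{w}\} \geq 0$ for every $\vec{w} \in H$. This is a density argument: given $\vec{w} \in H \subset C^m([-1,1],\mathbb{R}^2)$, I want to approximate it by polynomials in $\bigcup_N S_N$ in a norm strong enough that $\mathcal{F}_{\vec{\gamma}}\{\cdot\}$ — which involves $L^2$ inner products of derivatives up to order $k$ — is continuous. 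The natural choice is approximation in $C^k$ (uniform convergence of the function and its first $k$ derivatives), since then each term $\int_{-1}^1 (\mathcal{D}^{\vec k}\vec w)^T \vec F(x;\vec\gamma)\,\mathcal{D}^{\vec k}\vec w\,\dx$ passes to the limit. The subtlety — and the main obstacle — is that the approximating polynomials must \emph{exactly} satisfy the boundary conditions $\vec{A}\,\mathcal{B}^{\vec l}\vec{w}=\vec 0$ so that they actually lie in $S_N$, not merely approximately. One way to handle this: take a polynomial $\vec{p}_N$ (e.g.\ a truncated Legendre/Taylor-type expansion or a Bernstein-type approximant) that converges to $\vec{w}$ in $C^m$ (hence in $C^l$ since $l \leq m$), so that $\mathcal{B}^{\vec l}\vec{p}_N \to \mathcal{B}^{\vec l}\vec{w}$ and thus $\vec{A}\,\mathcal{B}^{\vec l}\vec{p}_N \to \vec 0$; then add a low-degree polynomial correction $\vec{r}_N$, bounded in $C^m$ by (a constant times) $\|\vec A \,\mathcal{B}^{\vec l}\vec{p}_N\|$, chosen so that $\vec{p}_N + \vec{r}_N$ satisfies the boundary conditions exactly. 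Such a correction exists whenever the boundary functionals are linearly independent (which we may assume of $\vec A$ without loss of generality), by solving a small finite linear system; and $\vec{r}_N \to 0$ in $C^m$, so $\vec{p}_N + \vec{r}_N \to \vec{w}$ in $C^k$ and lies in $S_N$ for $N$ large. Continuity of the quadratic form then finishes the claim. An alternative to the compactness step is to avoid it entirely: show directly that for every $\varepsilon > 0$ there is $N$ with $p^\star_N \geq p^\star - \varepsilon$, by contradiction with the claim, but the compactness route above is cleaner. I expect the boundary-condition-preserving approximation to be the only genuinely technical point; everything else is bookkeeping with LMIs and linear cost functions.
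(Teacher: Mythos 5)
Your monotonicity/lower-bound paragraph is correct, and your density argument is sound: the boundary-condition-preserving polynomial approximation you sketch is essentially the extension of the Weierstrass theorem that the paper itself invokes (via the reference to Peet) inside its key lemma, and it does give $\bigcap_{N} T^\mathrm{out}_N = T$. The genuine gap is the passage from this identity to $p^\star_N \to p^\star$. Your compactness step is unsupported: the objective is linear, so there is no coercivity; $T^\mathrm{out}_0$ (indeed every $T^\mathrm{out}_N$) may be unbounded even when $T$ is bounded, so ``the sequence lies in $T^\mathrm{out}_0$'' gives nothing; and adding a bounding constraint to the relaxations changes their optimal values to some $\tilde p^\star_N \geq p^\star_N$, so proving $\tilde p^\star_N \to p^\star$ does not prove $p^\star_N \to p^\star$ unless you separately show the ball constraint is eventually inactive---which is precisely the missing localization. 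Worse, the implication ``$\bigcap_N T^\mathrm{out}_N = T$ and a minimizer exists $\Rightarrow$ convergence of the infima'' is false in general for nested closed convex sets: in $\mathbb{R}^2$ the sets $C_N=\{(\gamma_1,\gamma_2):\,-1\le \gamma_2\le 0,\ \gamma_1\ge -N\gamma_2\}$ are closed, convex, nested, and decrease exactly to the ray $T=\{\gamma_2=0,\ \gamma_1\ge 0\}$, yet $\inf_{C_N}\gamma_2=-1$ for every $N$ while $\min_T\gamma_2=0$ is attained. So some quantitative control of how $T^\mathrm{out}_N$ approaches $T$ is indispensable, and your proposal contains none.

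The paper supplies exactly this control. It works with the normalized functional $\mathcal{H}_{\vec{\gamma}}\{\vec{w}\}=\mathcal{F}_{\vec{\gamma}}\{\vec{w}\}/\norm{\vec{w}}_k^2$ and proves a quantitative lemma: if $\vec{\gamma}$ is strictly infeasible, $t(\vec{\gamma})\le -2\varepsilon_{\vec{\gamma}}$, then $t_N(\vec{\gamma})\le -\varepsilon_{\vec{\gamma}}$ for all large $N$ (this is where the BC-preserving Weierstrass approximation is used, applied to a minimizing sequence). It then reduces to bounded $T$, covers the compact ``annulus'' $K=\{\vec{\gamma}:\varepsilon\le \mathrm{dist}(\vec{\gamma},T)\le 2\varepsilon\}$---which contains only infeasible points---so that, by compactness of $K$ and continuity of $t_N$, all of $K$ is infeasible for the relaxation uniformly for $N\ge N_0$; convexity of $T^\mathrm{out}_N\supset T$ then traps $T^\mathrm{out}_N$ inside an $O(\varepsilon)$-neighbourhood of $T$, whence the cost gap is at most $\norm{\vec{c}}\,\varepsilon$, and $\varepsilon\to 0$ finishes the proof. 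If you want to keep your subsequence scheme instead, you must first prove an analogue of this trapping, e.g.\ that for bounded $T$ the sets $T^\mathrm{out}_N$ are eventually bounded (a recession-cone argument based on $\bigcap_N T^\mathrm{out}_N=T$ can deliver this), so that minimizers exist and remain in a fixed compact set; as written, that step---the heart of the convergence proof---is missing.
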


\begin{proof}
See Appendix~\ref{A:ProofConvergenceOuterAprox}.
\end{proof}

\begin{remark}
Clearly, infeasibility of the SDP~\eqref{E:OuterSDP} for a certain $N$ provides a \emph{certificate of infeasibility} for~\eqref{E:OptProblem}. However, note that the feasibility (resp. unboundedness) of~\eqref{E:OuterSDP}  for any finite $N$ does \emph{not}  prove that~\eqref{E:OptProblem} is feasible (resp. unbounded).
\end{remark}

It is important to note that Theorem~\ref{T:ConvergenceOuterSDPrelax}
provides no control on the gap $p^\star - p^\star_N$ \emph{as a function of} $N$. In other words, an arbitrarily large $N$ might be required for a given level of approximation accuracy. Consequently, the rest of this work will focus on proving checkable conditions upon which upper bounds can be placed on $p^*$.

\section{Inner SDP relaxations}
\label{S:InnerApprox}

Upper bounds on the optimal value of~\eqref{E:OptProblem} that complement the lower bounds from~Theorem~\ref{T:ConvergenceOuterSDPrelax} can be found by optimizing the cost function over an inner approximation $T^\mathrm{in}_N$ of the true feasible set. Such an inner approximation can be constructed by replacing the integral inequality $\mathcal{F}_{\vec{\gamma}}\{\vec{w}\} \geq 0$ with a stronger, but tractable, integral inequality over the space $H$ in~\eqref{E:HspaceDef}. This strategy is complementary to the approach followed in \S\ref{S:OuterApprox}, where we effectively replaced the space $H$ with a tractable subspace $S_N$.
In particular, we look for a  lower bound $\mathcal{F}_{\vec{\gamma}}\{\vec{w}\} \geq \mathcal{G}_{\vec{\gamma}}\{\vec{w}\}$, where $\mathcal{G}_{\vec{\gamma}}\{\vec{w}\}$ is a functional whose non-negativity over $H$ can be enforced via a set of LMIs. Any $\vec{\gamma}$ such that $\mathcal{G}_{\vec{\gamma}}\{\vec{w}\} \geq 0$ on $H$ is then also feasible  for~\eqref{E:OptProblem}, and the corresponding cost $\vec{c}^T\vec{\gamma}$ is an upper bound for the optimal value of~\eqref{E:OptProblem}.

\subsection{Legendre series expansions}
\label{S:LegendreExpansions}

The key to constructing an inner approximation for the problem~\eqref{E:OptProblem} is to construct a functional  $\mathcal{G}_{\vec{\gamma}}:H\to\mathbb{R}$ such that $\mathcal{F}_{\vec{\gamma}}\{\vec{w}\} \geq \mathcal{G}_{\vec{\gamma}}\{\vec{w}\}$ for all $\vec{w}\in H$. To do this, we expand the components $u$ and $v$ of $\vec{w}$ (recall our simplifying restriction to the two-dimensional case) in terms of Legendre polynomials. That is, we write expansions such as
\begin{equation}
\label{E:FullLegExp}
\partial^\alpha u = \sum_{n=0}^{\infty} \hat{u}^\alpha_n \mathcal{L}_n(x),
\end{equation}
where $\mathcal{L}_n(x)$ is the Legendre polynomial of degree $n$ and $\hat{u}^\alpha_n$ is the $n$-th Legendre coefficient. Similar expressions can be written for $v$ and its derivatives. 

Legendre series expansions are useful because the Legendre polynomials are orthogonal on $[-1,1]$, i.e., $\int_{-1}^{1}\mathcal{L}_m\,\mathcal{L}_n \,{\rm d}x = 0$ if $m\neq n$~\cite{Jackson1930}. This will enable us to enforce the non-negativity of the functional $\mathcal{F}_{\vec{\gamma}}$ in~\eqref{E:OptProblem} with a set of finite-dimensional, numerically tractable conditions. Note that although other polynomial basis functions, e.g. Chebyshev polynomials, may have more attractive numerical properties and may be more appropriate to implement the outer SDP relaxation of Theorem~\ref{T:ConvergenceOuterSDPrelax}, they cannot be used here  because they are only orthogonal with respect to a weighting function.
A short introduction to Legendre polynomials, Legendre series and their properties is given in Appendix~\ref{A:LegPolys}; see~\cite{Jackson1930,Zeidler1995,Agarwal2009} for a comprehensive treatment of the subject.

To avoid working with infinite series and to facilitate our analysis, we decompose~\eqref{E:FullLegExp} into a finite sum and a remainder function. More precisely, given an integer $i$ we define the remainder function
\begin{equation}
\label{E:RemFunDef}
U^\alpha_i(x) = \sum_{n=i+1}^{\infty} \hat{u}^\alpha_n \mathcal{L}_n(x).
\end{equation}
Next, we choose an integer $N$ such that
\begin{equation}
\label{E:Ncondition}
N \geq d_F+ k -1,
\end{equation}
where $d_F$ is the degree of the polynomial matrix $\vec{F}$ defined in~\eqref{E:FmatDef}. For each $\alpha\in\{1,\,\hdots,\,k\}$ we decompose the Legendre expansion of $\partial^\alpha u$  as
\begin{align}
\label{E:LegExpansion}
\der{\alpha}u &= \sum_{n=0}^{N+\alpha} \hat{u}^\alpha_n \, 
\Leg_n(x) + U^\alpha_{N+\alpha}(x).
\end{align}
For notational ease, we record the Legendre coefficients $\hat{u}_r^\alpha,\,\hdots,\,\hat{u}_s^\alpha$ for any two integers $0\leq r\leq s$ in the vector
\begin{equation}
\label{E:LegCoeffVector}
\vec{\hat{u}}^\alpha_{[r,s]} = 
\begin{bmatrix} 
\hat{u}_r^\alpha, & \ldots, & \hat{u}_{s}^\alpha
 \end{bmatrix}^T \in \mathbb{R}^{s-r+1}.
\end{equation}

For technical reasons that will be pointed out in \S\ref{S:IntTermExpansion}, it will also be convenient to introduce an ``extended'' decomposition for the highest-order derivative, $\der{k}u$. Specifically, let
\begin{equation}
\label{E:Mdef}
M  \defeq N+2k+d_F
\end{equation}
and consider
\begin{equation}
\label{E:DerKExpansion}
\der{k}u = \sum_{n=0}^{M} \hat{u}^k_n \, \Leg_n(x) + 
U^k_M(x). 
\end{equation} 
 
The following result, proven in~Appendix~\ref{A:ProofIntegrationLemma}, relates the Legendre coefficients of $u,\,\partial u,\,\hdots,\,\partial^k u$.
\begin{lemma}
\label{T:IntegrationLemma}
Let $u\in C^m([-1,1])$ and its derivatives up to order $k\leq m-1$ be expanded as in~\eqref{E:LegExpansion}, and let $M$ be as in~\eqref{E:Mdef}. For any $\alpha\in\{1,\,\hdots,\,k\}$ and any two integers $r,\,s$ with $0\leq r\leq s \leq M+\alpha-k$, there exist matrices $\vec{B}^\alpha_{[r,s]}$ and $\vec{D}^\alpha_{[r,s]}$ such that 
\begin{equation*}
\vec{\hat{u}}^\alpha_{[r,s]} = \vec{B}^\alpha_{[r,s]} \,\mathcal{D}^{k-1}u\at{-1} + \vec{D}^\alpha_{[r,s]} \vec{\hat{u}}^k_{[0,M]}.
\end{equation*}
Furthermore, $\vec{B}^\alpha_{[r,s]}=\vec{0}$ if $r\geq k-\alpha$.
\end{lemma}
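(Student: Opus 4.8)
The statement is essentially a bookkeeping result about how Legendre coefficients of successive antiderivatives are related, so the proof will be by (finite, downward) induction on $\alpha$, peeling off one integration at a time, starting from $\alpha = k$ and descending to $\alpha = 1$. The base case $\alpha = k$ is trivial: we simply take $\vec{B}^k_{[r,s]} = \vec{0}$ and $\vec{D}^k_{[r,s]}$ to be the selection matrix that extracts the block $\vec{\hat u}^k_{[r,s]}$ from $\vec{\hat u}^k_{[0,M]}$, which is well-defined precisely because $s \le M + \alpha - k = M$ when $\alpha = k$. The inductive step is where the real content lies.

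**The inductive step.** Suppose the claim holds for $\alpha + 1$. The key tool is the standard Legendre integration identity (which must be in Appendix~\ref{A:LegPolys}): if $g = \sum_n \hat g_n \mathcal{L}_n$ and $f' = g$ with $f = \sum_n \hat f_n \mathcal{L}_n$, then for $n \ge 1$ one has a three-term-type relation expressing $\hat g_n$ in terms of $\hat f_{n-1}$ and $\hat f_{n+1}$ — equivalently, $\hat f_n$ for $n\ge1$ is $\frac{1}{2n+1}(\hat g_{n-1} - \hat g_{n+1})$ — while $\hat f_0$ is a free constant fixed by the boundary/initial value. Applying this with $f = \partial^\alpha u$ and $g = \partial^{\alpha+1} u$: each coefficient $\hat u^\alpha_n$ for $n \ge 1$ is a fixed linear combination of $\hat u^{\alpha+1}_{n-1}$ and $\hat u^{\alpha+1}_{n+1}$, and $\hat u^\alpha_0$ is determined by the single scalar $\partial^\alpha u(-1)$ together with the tail $\sum_{j\ge1}(-1)^j \hat u^\alpha_j$ — which telescopes, via the same identity and $\mathcal L_n(-1)=(-1)^n$, into something expressible through $\partial^{\alpha}u(-1)$ and finitely many $\hat u^{\alpha+1}_j$. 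Hence $\vec{\hat u}^\alpha_{[r,s]}$ is an explicit linear function of $\partial^\alpha u(-1)$ and $\vec{\hat u}^{\alpha+1}_{[r',s']}$ for suitable $r',s'$ (we need coefficients up to index $s+1$, so the requirement $s+1 \le M + (\alpha+1) - k$, i.e. $s \le M+\alpha-k$, is exactly what makes the induction hypothesis applicable). Composing with the induction hypothesis for $\alpha+1$ expresses $\vec{\hat u}^\alpha_{[r,s]}$ through $\partial^\alpha u(-1)$, $\mathcal{D}^{k-1}u(-1)$, and $\vec{\hat u}^k_{[0,M]}$; since $\partial^\alpha u(-1)$ is itself a component of $\mathcal{D}^{k-1}u(-1)$ when $\alpha \le k-1$ (and when $\alpha = k$ the base case already handled it), we can absorb it into the $\vec{B}^\alpha_{[r,s]}$ block, defining $\vec{B}^\alpha_{[r,s]}$ and $\vec{D}^\alpha_{[r,s]}$ accordingly.

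**The support statement.** For the final claim, that $\vec{B}^\alpha_{[r,s]} = \vec{0}$ when $r \ge k - \alpha$: the point is that the only way a boundary value $\partial^\beta u(-1)$ with $\beta \le k-1$ enters $\hat u^\alpha_n$ is through the constant terms $\hat u^{\alpha}_0, \hat u^{\alpha+1}_0, \dots$ picked up at each integration; integrating from $\partial^k u$ down to $\partial^\alpha u$ introduces the constants $\partial^k u(-1) = \hat u^k_0 + (\text{tail})$? — no: more precisely, only the constants $\partial^\alpha u(-1),\dots,\partial^{k-1}u(-1)$ appear, and $\hat u^{k-1}_0$ affects $\hat u^{\alpha}_n$ only for $n \le k-1-\alpha$, $\hat u^{k-2}_0$ only for $n\le k-2-\alpha$, etc. — so if the lowest index $r$ of the requested block satisfies $r \ge k-\alpha > k-1-\alpha \ge \dots$, none of the boundary constants survive into $\vec{\hat u}^\alpha_{[r,s]}$, forcing $\vec{B}^\alpha_{[r,s]} = \vec{0}$. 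This should be read off cleanly from the recursion by tracking which shift of the index range each successive constant of integration reaches; I would make it precise by an auxiliary claim inside the induction, namely that $\hat u^\alpha_n$ depends on $\{\partial^\beta u(-1): \alpha \le \beta \le k-1\}$ only through those $\beta$ with $\beta - \alpha \ge n$ is wrong — rather $\beta$ with $n \le k - 1 - (\text{number of integrations at which that constant is the relevant one})$; I will state it as: the entries of $\vec{B}^\alpha_{[r,s]}$ corresponding to $\partial^\beta u(-1)$ vanish whenever $r > \beta - \alpha$, which immediately yields both the lemma's displayed identity and the vanishing when $r \ge k-\alpha$.

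**Main obstacle.** The routine part is the Legendre integration recursion; the delicate part is tracking the index ranges carefully enough to (i) verify that the induction hypothesis is only ever invoked within its stated range $s \le M+\alpha-k$ — the $+1$ shift at each step must not overflow — and (ii) pin down the exact vanishing pattern of $\vec{B}^\alpha_{[r,s]}$. I expect (ii), the support statement, to be the main source of friction, because it requires understanding precisely how far "up" in the coefficient index the constant of integration introduced at level $\beta$ propagates after $\beta - \alpha$ further integrations, and getting the inequality $r \ge k-\alpha$ (rather than an off-by-one variant) to come out correctly.
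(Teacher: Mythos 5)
Your proposal is correct and takes essentially the same route as the paper: both arguments rest on the one-step Legendre integration relation (coefficient of $\partial^\alpha u$ at index $n\ge 1$ as a fixed combination of the coefficients of $\partial^{\alpha+1}u$ at indices $n\pm1$, with the boundary value $\partial^\alpha u(-1)$ entering only the index-$0$ coefficient), iterated between levels $\alpha$ and $k$ so that the index window widens by one per step, which is precisely where the hypothesis $s\le M+\alpha-k$ is consumed, and the same propagation count of the integration constants yields $\vec{B}^\alpha_{[r,s]}=\vec{0}$ for $r\ge k-\alpha$. The only blemish is your quoted constants ($\tfrac{1}{2n+1}(\hat{g}_{n-1}-\hat{g}_{n+1})$ instead of $\tfrac{\hat{g}_{n-1}}{2n-1}-\tfrac{\hat{g}_{n+1}}{2n+3}$), which is immaterial since the lemma only asserts existence of the matrices.
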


This lemma simply states that given the Legendre coefficients $\hat{u}^k_0,\,\hdots,\,\hat{u}^k_M$ of $\partial^ku$, the Legendre coefficients of all derivatives of order $\alpha<k$ can be computed uniquely if the boundary values $\mathcal{D}^{k-1}u\at{-1} $ are specified. These boundary values play the role of integration constants, and should be treated as variables until specific BCs are prescribed. Given an integer $n$, we therefore define the vector of variables
\begin{equation}
\label{E:uCheckDef}
\vec{\check{u}}_{n} = \begin{bmatrix}
\left(\mathcal{D}^{k-1} u\at{-1}\right)^T, & \hat{u}^k_0, & \ldots, & \hat{u}^k_n
\end{bmatrix}^T
\in \mathbb{R}^{k+n+1}.
\end{equation}

The boundary values of $u$ and its derivatives can also be represented in terms of our Legendre expansions. This is useful because the integral inequality in~\eqref{E:OptProblem} is only required to hold for functions that satisfy prescribed BCs. The following result is proven in Appendix~\ref{A:ProofBCLemma}.
\begin{lemma}
\label{T:BCLemma}
Let $u\in C^m([-1,1])$ and its derivatives up to order $k\leq m-1$ be expanded as in~\eqref{E:LegExpansion}, and let $\mathcal{B}^{k-1} u \in \mathbb{R}^{2k}$ be defined according to~\eqref{E:BoundaryVector}. Moreover, let $M$ be as in~\eqref{E:Mdef}, and let $\vec{\check{u}}_{M}\in\mathbb{R}^{k+M+1}$ be defined according to~\eqref{E:uCheckDef}. There exists a matrix $\vec{G}_M \in \mathbb{R}^{2k\times (k+M+1)}$ such that 
$\mathcal{B}^{k-1} u = \vec{G}_M \vec{\check{u}}_{M}$.
\end{lemma}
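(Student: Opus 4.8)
The plan is to express each boundary value appearing in $\mathcal{B}^{k-1}u = [\mathcal{D}^{k-1}u(-1)^T,\,\mathcal{D}^{k-1}u(1)^T]^T = [u(-1),\partial u(-1),\hdots,\partial^{k-1}u(-1),u(1),\partial u(1),\hdots,\partial^{k-1}u(1)]^T$ as a linear function of the entries of $\vec{\check{u}}_M = [(\mathcal{D}^{k-1}u(-1))^T,\hat u^k_0,\hdots,\hat u^k_M]^T$. The first block of $2k$-dimensional vector $\mathcal{B}^{k-1}u$, namely $\mathcal{D}^{k-1}u(-1)$, is trivial: it is literally the first $k$ entries of $\vec{\check{u}}_M$, so the corresponding rows of $\vec{G}_M$ are $[\vec{I}_k \;\; \vec{0}]$. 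The work is entirely in the second block, the boundary values at $x=1$.

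For the $x=1$ block, the key observation is that the Legendre polynomials satisfy $\mathcal{L}_n(1)=1$ for every $n$ (and $\mathcal{L}_n(-1)=(-1)^n$). Hence, using the finite-plus-remainder expansion~\eqref{E:LegExpansion} for $\partial^\alpha u$ and evaluating at $x=1$ gives $\partial^\alpha u(1) = \sum_{n=0}^{N+\alpha}\hat u^\alpha_n + U^\alpha_{N+\alpha}(1)$ for $0\le\alpha\le k-1$. The remainder terms are a nuisance, so instead I would evaluate the \emph{full} series $\partial^\alpha u(x)=\sum_{n=0}^\infty \hat u^\alpha_n\mathcal{L}_n(x)$ at $x=1$, giving $\partial^\alpha u(1)=\sum_{n=0}^\infty \hat u^\alpha_n$; this series converges since $u\in C^m$ and $\alpha\le k\le m-1$, and in fact it is the value of a continuous function so the sum is finite and equals $\partial^\alpha u(1)$. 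Now I invoke Lemma~\ref{T:IntegrationLemma}: taking $r=0$ and $s$ as large as allowed, the Legendre coefficients $\hat u^\alpha_0,\hdots,\hat u^\alpha_s$ are an explicit linear image of $\mathcal{D}^{k-1}u(-1)$ and $\vec{\hat u}^k_{[0,M]}$. The point is that for $\alpha<k$ the high-order Legendre coefficients of $\partial^\alpha u$ vanish beyond a certain index: by the standard integration formula for Legendre series, integrating a series that is a polynomial-plus-lower-order-data, the coefficient $\hat u^\alpha_n$ for $n>M+\alpha-k$ is determined by $\hat u^k_{n'}$ with $n'>M$, which are zero by construction of the expansion~\eqref{E:DerKExpansion} where we only keep coefficients up to $M$. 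More carefully: the choice $M=N+2k+d_F$ and the antidifferentiation recursion $\hat u^{\alpha-1}_n \propto (\hat u^\alpha_{n-1} - \hat u^\alpha_{n+1})$ (up to normalization) propagate the "truncation at $M$" down to "truncation at $M+\alpha-k$" for $\partial^\alpha u$, so the tail of the series for $\partial^\alpha u(1)$ is identically zero and $\partial^\alpha u(1)=\sum_{n=0}^{M+\alpha-k}\hat u^\alpha_n$ is a \emph{finite} sum. Then Lemma~\ref{T:IntegrationLemma} with $[r,s]=[0,M+\alpha-k]$ writes this finite sum as $\vec{1}^T(\vec{B}^\alpha_{[0,M+\alpha-k]}\mathcal{D}^{k-1}u(-1) + \vec{D}^\alpha_{[0,M+\alpha-k]}\vec{\hat u}^k_{[0,M]})$, which is exactly a linear function of $\vec{\check{u}}_M$. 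Assembling these row vectors for $\alpha=0,1,\hdots,k-1$ gives the rows of $\vec{G}_M$ corresponding to the $x=1$ block, completing the construction.

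The main obstacle — and the step I would spend the most care on — is justifying that the Legendre expansion of $\partial^\alpha u$ genuinely truncates at index $M+\alpha-k$, i.e. that $\hat u^\alpha_n=0$ for $n>M+\alpha-k$ under the decomposition~\eqref{E:DerKExpansion} together with~\eqref{E:LegExpansion}. This is really a statement about the antidifferentiation recursion for Legendre coefficients (the discrete analogue of $\int \mathcal{L}_n = \frac{1}{2n+1}(\mathcal{L}_{n+1}-\mathcal{L}_{n-1})$): going from the coefficients of $\partial^k u$ to those of $\partial^{k-1}u$ shifts the top index down by one (modulo the integration-constant correction), so after $k-\alpha$ antidifferentiations starting from a sequence supported on $\{0,\hdots,M\}$ one lands on a sequence supported on $\{0,\hdots,M+\alpha-k\}$ — provided the extra degrees of freedom $\hat u^k_n$ for $n>M$ in the \emph{infinite} series are consistently set to contribute nothing, which is precisely what the "extended decomposition" in~\eqref{E:DerKExpansion} arranges and why $M$ was chosen with that slack. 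Once this truncation fact is in hand (it is essentially the content behind Lemma~\ref{T:IntegrationLemma}, so I would state it as a corollary of that lemma's proof rather than reprove it), everything else is bookkeeping: stack the trivial $[\vec{I}_k\;\vec 0]$ block on top of the $k$ row vectors built from $\vec{1}^T\vec{B}^\alpha$ and $\vec{1}^T\vec{D}^\alpha$, and read off $\vec{G}_M\in\mathbb{R}^{2k\times(k+M+1)}$. I would also remark that the same construction, with $\mathcal{L}_n(-1)=(-1)^n$ replacing $\mathcal{L}_n(1)=1$, reproduces the identity $\mathcal{D}^{k-1}u(-1)=[\vec{I}_k\;\vec 0]\vec{\check{u}}_M$, serving as a consistency check on the signs.
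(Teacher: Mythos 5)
Your reduction to the $x=1$ block and the use of $\mathcal{L}_n(1)=1$ are fine, but the step you yourself identify as the crux is false: the Legendre expansion of $\partial^\alpha u$ does \emph{not} truncate at index $M+\alpha-k$. The extended decomposition~\eqref{E:DerKExpansion} does not set $\hat{u}^k_n=0$ for $n>M$; those coefficients are merely collected in the remainder $U^k_M$, which is nonzero for a generic $u\in C^m([-1,1])$ (if it vanished, $\partial^k u$, hence $u$, would be a polynomial). Consequently the coefficients $\hat{u}^\alpha_n$ with $n>M+\alpha-k$ are nonzero in general, your ``finite sum'' $\sum_{n=0}^{M+\alpha-k}\hat{u}^\alpha_n$ differs from $\partial^\alpha u(1)$ by an uncontrolled tail, and the matrix you assemble would satisfy the exact identity claimed in the lemma only for polynomial $u$, not for every $u\in C^m$. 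Indeed, the restriction $s\le M+\alpha-k$ in Lemma~\ref{T:IntegrationLemma} exists precisely because only the \emph{low}-order coefficients of $\partial^\alpha u$ are determined by the data in $\vec{\check{u}}_M$; the high-order ones depend on the tail coefficients of $\partial^k u$, which are not available.

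The exact identity can be obtained in two ways. The paper's route avoids the infinite series altogether: by the fundamental theorem of calculus and the orthogonality relation $\int_{-1}^{1}\mathcal{L}_n\,{\rm d}x=2\delta_{n0}$, one gets $\partial^\alpha u(1)=\partial^\alpha u(-1)+2\hat{u}^{\alpha+1}_0$ for each $\alpha\in\{0,\dots,k-1\}$, and $\hat{u}^{\alpha+1}_0$ is an exact linear function of $\vec{\check{u}}_M$ by Lemma~\ref{T:IntegrationLemma} with $r=s=0$, well within the allowed range. Alternatively, your series-at-$x=1$ idea can be repaired without any truncation claim: summing the recursion $\hat{u}^{\alpha}_n=\frac{\hat{u}^{\alpha+1}_{n-1}}{2n-1}-\frac{\hat{u}^{\alpha+1}_{n+1}}{2n+3}$ over $n\ge 1$ telescopes (the terms are $b_{n-1}-b_{n+1}$ with $b_m=\hat{u}^{\alpha+1}_m/(2m+1)\to 0$), leaving $\hat{u}^{\alpha+1}_0+\tfrac{1}{3}\hat{u}^{\alpha+1}_1$; adding $\hat{u}^\alpha_0$ from~\eqref{E:DifferentiationRule_0} again gives $\partial^\alpha u(1)=\partial^\alpha u(-1)+2\hat{u}^{\alpha+1}_0$. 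Either way the entire tail drops out exactly, which is what your argument needs and cannot obtain from the (false) vanishing of the tail coefficients.
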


\subsection{Legendre expansions of $\mathcal{F}_{\vec{\gamma}}\{\vec{w}\}$}
\label{S:IntTermExpansion}

Recalling the definition of $\mathcal{D}^{\vec{k}}\vec{w}$, we see from~\eqref{E:OptProblem} that $\mathcal{F}_{\vec{\gamma}}\{\vec{w}\}$ is a sum of elementary terms of the form
\begin{equation}
\label{E:GenericTerm}
\int_{-1}^{1} f\,\partial^\alpha u \, \partial^\beta v \,{\rm d}x,
\end{equation}
where $\alpha,\beta\in\{0,\,\hdots,\,k\}$. Here, $f=f(x;\vec{\gamma})$ denotes the appropriate entry of the integrand matrix $\vec{F}(x;\vec{\gamma})$ and, consequently, it is a polynomial of degree at most $d_F$ whose coefficients are affine in $\vec{\gamma}$. We consider a term involving both components $u$ and $v$ of $\vec{w}$ for generality, but the following arguments also hold when $\partial^\alpha u\, \partial^\beta v$ is replaced with $\partial^\alpha u\, \partial^\beta u$ or $\partial^\alpha v\, \partial^\beta v$. 

For each term of the form~\eqref{E:GenericTerm}, we substitute $\partial^\alpha u$ and $\partial^\beta v$ with their decomposed Legendre expansions according to the following strategy:
\begin{itemize}
\item{If $\alpha\neq k$ \emph{or} $\beta\neq k$, use~\eqref{E:LegExpansion}.}
\item{If $\alpha=\beta=k$, use the ``extended'' decomposition~\eqref{E:DerKExpansion}.}
\end{itemize}
The reasons for this choice will be explained in Remark~\ref{R:ExpDer}, after Lemma~\ref{T:LemmaQ}. In either case, we can rewrite~\eqref{E:GenericTerm} as
\begin{equation}
\label{E:GenericFiTermExpansion}
\int_{-1}^{1}  f\, \partial^\alpha u\, \partial^\beta v \,{\rm d}x = 
\mathcal{P}^{\alpha\beta}_{uv} + 
\mathcal{Q}^{\alpha\beta}_{uv} + \mathcal{R}^{\alpha\beta}_{uv},
\end{equation}
where
\begin{subequations}
\begin{align}
\mathcal{P}^{\alpha\beta}_{uv} &= \sum_{m=0}^{N_\alpha}\sum_{n=0}^{N_\beta} 
\hat{u}^\alpha_m \hat{v}^\beta_n 
\int_{-1}^{1} f\, \mathcal{L}_m \, \mathcal{L}_n \,{\rm d}x,
\label{E:Pdef}
\\
\mathcal{Q}^{\alpha\beta}_{uv} &=  
\sum_{n=0}^{N_\alpha}  \hat{u}^\alpha_n 
\int_{-1}^{1} f\, \mathcal{L}_n \, V^\beta_{N_\beta} \,{\rm d}x
+\sum_{n=0}^{N_\beta}  \hat{v}^\beta_n 
\int_{-1}^{1} f \, \mathcal{L}_n \, U^\alpha_{N_\alpha}\,{\rm d}x,
\label{E:Qdef}
\\
\mathcal{R}^{\alpha\beta}_{uv} &= 
\int_{-1}^{1} f\, U^\alpha_{N_\alpha}\, V^\beta_{N_\beta}\,{\rm d}x.
\label{E:Rdef}
\end{align}
\end{subequations}
Here and in the following it should be understood that $N_\alpha=N+\alpha$ and $N_\beta=N+\beta$ 
if~\eqref{E:LegExpansion} is used to expand $\partial^\alpha u$ and $\partial^\beta v$, while $N_\alpha = N_\beta = M=N+2k+d_F$ if~\eqref{E:DerKExpansion} is used. 

The term $\mathcal{P}^{\alpha\beta}_{uv}$ is finite dimensional, and for any choice of $\alpha,\beta\in\{0,\hdots,k\}$ it can be rewritten as a symmetric quadratic form for the vectors $\vec{\hat{u}}^\alpha_{[0,N_\alpha]}$ and $\vec{\hat{v}}^\beta_{[0,N_\beta]}$.  Recalling Lemma~\ref{T:IntegrationLemma} and defining
\begin{equation}
\label{E:PsiDef}
\vec{\psi}_{M} \defeq 
\begin{bmatrix} \vec{\check{u}}_{M} \\  \vec{\check{v}}_{M}\end{bmatrix} 
\in \mathbb{R}^{2(k+M+1)},
\end{equation}
where $\vec{\check{u}}_{M}$ and $\vec{\check{v}}_{M}$ are as in~\eqref{E:uCheckDef}, we arrive at the following result.
\begin{lemma}
\label{T:LemmaP}
Let $\mathcal{P}^{\alpha\beta}_{uv}$ be as in~\eqref{E:Pdef} and $\vec{\psi}_M$ be defined according to~\eqref{E:PsiDef}. There exists a matrix 
$\vec{P}^{\alpha\beta}_{uv}(\vec{\gamma}) \in \mathbb{S}^{2(k+M+1)}$, whose entries are affine in $\vec{\gamma}$, such that
\begin{equation*}
\mathcal{P}^{\alpha\beta}_{uv} = 
{\vec{\psi}_M}^T \, \vec{P}^{\alpha\beta}_{uv}(\vec{\gamma})\,\vec{\psi}_M.
\end{equation*}
\end{lemma}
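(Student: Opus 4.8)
The plan is to trace through the definition of $\mathcal{P}^{\alpha\beta}_{uv}$ in~\eqref{E:Pdef} and show that it is a quadratic form in a vector that can, via Lemma~\ref{T:IntegrationLemma}, be expressed linearly in terms of $\vec{\psi}_M$. First I would observe that the integrals $\int_{-1}^{1} f\,\mathcal{L}_m\,\mathcal{L}_n\,{\rm d}x$ appearing in~\eqref{E:Pdef} are scalars whose values are affine in $\vec{\gamma}$, since $f=f(x;\vec{\gamma})$ is a polynomial of degree at most $d_F$ with coefficients affine in $\vec{\gamma}$. Collecting these scalars into a matrix $\vec{E}^{\alpha\beta}(\vec{\gamma})$ with $(m,n)$ entry equal to $\int_{-1}^{1} f\,\mathcal{L}_m\,\mathcal{L}_n\,{\rm d}x$, the term $\mathcal{P}^{\alpha\beta}_{uv}$ becomes the bilinear form $(\vec{\hat{u}}^\alpha_{[0,N_\alpha]})^T \vec{E}^{\alpha\beta}(\vec{\gamma}) \, \vec{\hat{v}}^\beta_{[0,N_\beta]}$, which can be symmetrized in the obvious way (average with its transpose and work with the stacked vector $[\vec{\hat{u}}^\alpha_{[0,N_\alpha]};\vec{\hat{v}}^\beta_{[0,N_\beta]}]$) to obtain a genuine symmetric quadratic form with $\vec{\gamma}$-affine entries.

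The next step is to rewrite the vectors $\vec{\hat{u}}^\alpha_{[0,N_\alpha]}$ and $\vec{\hat{v}}^\beta_{[0,N_\beta]}$ in terms of $\vec{\check{u}}_M$ and $\vec{\check{v}}_M$. Here I need to check that Lemma~\ref{T:IntegrationLemma} applies with $r=0$, $s=N_\alpha$: the lemma requires $0\leq r \leq s \leq M+\alpha-k$. When~\eqref{E:LegExpansion} is used we have $N_\alpha = N+\alpha$, and since $M = N+2k+d_F$ we get $M+\alpha-k = N+k+d_F+\alpha \geq N+\alpha = N_\alpha$ (using $k\geq 1$, $d_F\geq 0$), so the hypothesis holds; when~\eqref{E:DerKExpansion} is used we have $\alpha=\beta=k$ and $N_\alpha=N_\beta=M$, and $M+\alpha-k = M$, so again the bound is met with equality. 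Thus Lemma~\ref{T:IntegrationLemma} yields matrices $\vec{B}^\alpha_{[0,N_\alpha]}, \vec{D}^\alpha_{[0,N_\alpha]}$ with $\vec{\hat{u}}^\alpha_{[0,N_\alpha]} = \vec{B}^\alpha_{[0,N_\alpha]}\,\mathcal{D}^{k-1}u\at{-1} + \vec{D}^\alpha_{[0,N_\alpha]}\,\vec{\hat{u}}^k_{[0,M]}$, which is precisely a linear map applied to $\vec{\check{u}}_M$; call this map $\vec{T}^\alpha$, so $\vec{\hat{u}}^\alpha_{[0,N_\alpha]} = \vec{T}^\alpha \vec{\check{u}}_M$, and analogously $\vec{\hat{v}}^\beta_{[0,N_\beta]} = \vec{T}^\beta \vec{\check{v}}_M$. (For the special case $\alpha=k$, the identity is trivial with $\vec{T}^k$ the projection selecting the $\vec{\hat{u}}^k_{[0,M]}$ block.) Substituting into the symmetrized quadratic form and conjugating by the block-diagonal matrix $\diag(\vec{T}^\alpha,\vec{T}^\beta)$ followed by the appropriate embedding into $\vec{\psi}_M = [\vec{\check{u}}_M;\vec{\check{v}}_M]$ produces the desired matrix $\vec{P}^{\alpha\beta}_{uv}(\vec{\gamma}) \in \mathbb{S}^{2(k+M+1)}$ with entries affine in $\vec{\gamma}$, since affinity is preserved under the fixed linear substitutions.

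The main obstacle — though it is more bookkeeping than conceptual difficulty — is keeping the index ranges and the two expansion conventions straight: one must verify the $s \leq M+\alpha-k$ hypothesis of Lemma~\ref{T:IntegrationLemma} in both the $N_\alpha = N+\alpha$ and $N_\alpha = M$ cases, and one must handle the mixed situation where, say, $\alpha=k$ but $\beta\neq k$ (or vice versa), in which the two factors are expanded with different truncation lengths; the dimensions still match after embedding because both $\vec{\check{u}}_M$ and $\vec{\check{v}}_M$ carry the full index range up to $M$. Beyond that, the affinity claim is immediate and the symmetry is arranged by construction, so no further estimates are needed.
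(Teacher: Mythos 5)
Your proposal is correct and follows essentially the same route the paper takes (the paper proves this lemma only implicitly, in the text preceding it): write $\mathcal{P}^{\alpha\beta}_{uv}$ as a finite quadratic form in the truncated Legendre coefficient vectors with $\vec{\gamma}$-affine entries, then use Lemma~\ref{T:IntegrationLemma} to express those vectors linearly in $\vec{\check{u}}_M$, $\vec{\check{v}}_M$ and embed into $\vec{\psi}_M$. Your explicit check that $s=N_\alpha\leq M+\alpha-k$ in both expansion conventions is exactly the bookkeeping the paper leaves to the reader, so nothing is missing.
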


The term $\mathcal{Q}^{\alpha\beta}_{uv}$ is less straightforward to handle, because it couples the first $N_\alpha+1$ and $N_\beta+1$ modes of $\partial^\alpha u$ and $\partial^\beta v$, respectively, to the remainder functions  $V^\beta_{N_\beta}$ and $U^\alpha_{N_\alpha}$. 
We show in Appendix~\ref{A:ProofLemmaQ} that considering the extended decomposition~\eqref{E:DerKExpansion} for the Legendre series of $\partial^k u$ and $\partial^k v$ enables us to write $\mathcal{Q}^{\alpha\beta}_{uv}$ as a finite-dimensional matrix quadratic form for the vector $\vec{\psi}_M$ if $\alpha\neq k$ or $\beta\neq k$. If $\alpha=\beta=k$, on the other hand, we cannot do the same unless $f$ in~\eqref{E:Qdef} is independent of $x$ (in this case, the orthogonality of the Legendre polynomials and the remainder functions implies that $\mathcal{Q}^{kk}_{uv}=0$). Instead, we estimate $\mathcal{Q}^{kk}_{uv}$ to decouple the remainder functions from the other terms. 

To make these ideas more precise, let us introduce a family of ``deflation'' matrices $\vec{L}_n$ such that
\begin{equation}
\label{E:DeflMatDef}
\vec{L}_n \,\vec{\psi}_M = \begin{bmatrix} \vec{\hat{u}}^k_{[n,M]} \\ \vec{\hat{v}}^k_{[n,M]} \end{bmatrix}, \quad n\in\{0,\,\hdots,\,M\},
\end{equation}
and $\vec{L}_n \,\vec{\psi}_M = \vec{0}$ if $n > M$. The existence of $\vec{L}_n$ follows from~\eqref{E:PsiDef}, \eqref{E:uCheckDef}, and~\eqref{E:LegCoeffVector}. 
Moreover, given four integers $a\leq b$ and $c\leq d$, let 
$\vec{\Phi}{}_{[a,b]}^{[c,d]}$ be 
a $(b-a+1)\times(d-c+1)$ matrix whose $ij$-th element is defined as
\begin{equation}
\label{E:PhiMatDef}
\left(\vec{\Phi}{}_{[a,b]}^{[c,d]}\right)_{ij} = \int_{-1}^{1} f\,\mathcal{L}_{m_i}\, 
\mathcal{L}_{n_j} \,{\rm d}x,
\end{equation}
where $m_i$ and $n_j$ are the $i$-th and $j$-th elements of the sequences 
$\{a,\,\hdots,\,b\}$ and $\{c,\,\hdots,\,d\}$. Note that, strictly speaking, $\vec{\Phi}{}_{[c,d]}^{[a,b]}$ depends on $f$, and its entries are affine on $\vec{\gamma}$. We do not indicate such dependencies explicitly to avoid complicating our notation further. 
The following result is proven in Appendix~\ref{A:ProofLemmaQ}.

\begin{lemma}
\label{T:LemmaQ}
Let $\mathcal{Q}^{\alpha\beta}_{uv}$ be as in~\eqref{E:Qdef} and let $d_F$ be the degree of $f(x;\vec{\gamma})$.
\begin{enumerate}[(i)]
\item If $\alpha\neq k$ or $\beta\neq k$, there exists a matrix $
\vec{Q}^{\alpha\beta}_{uv}(\vec{\gamma}) \in \mathbb{S}^{2(k+M+1)}$, whose entries are affine in $\vec{\gamma}$, such that
\begin{equation*}
\mathcal{Q}^{\alpha\beta}_{uv} = 
{\vec{\psi}_M}^T \, \vec{Q}^{\alpha\beta}_{uv}(\vec{\gamma})\,\vec{\psi}_M.
\end{equation*}
\item If $\alpha=\beta=k$, let $\overline{M}\defeq M+1-d_F$, define $\vec{\Delta}\in \mathbb{S}^{d_F}$ as
\begin{equation*}
\vec{\Delta}\defeq
\mathrm{Diag}\left(\frac{2}{2(M+1)+1},\,\hdots,\,\frac{2}{2(M+d_F)+1}\right) , 
\end{equation*}
and define $\vec{Y}(\vec{\gamma}) \in \mathbb{R}^{2d_F \times 2d_F}$ as 
\begin{equation*}
\vec{Y}(\vec{\gamma}) \defeq
\frac{1}{2}\begin{bmatrix}
\vec{0} & \vec{\Phi}{}_{[M+1-d_F,M]}^{[M+1,M+d_f]} \\ \vec{\Phi}{}_{[M+1-d_F,M]}^{[M+1,M+d_f]} & \vec{0}
\end{bmatrix}.
\end{equation*}
Finally, let $\vec{Q}^{kk}_{uv}\in\mathbb{S}^{2d_F}$ and a diagonal matrix 
$\vec{\Sigma}^{kk}_{uv} \in \mathbb{S}^2$
satisfy the LMI
\begin{equation}
\label{E:AuxiliaryLMI}
\vec{\Omega}(\vec{Q}^{kk}_{uv},\vec{\Sigma}^{kk}_{uv},\vec{\gamma})\defeq 
\begin{bmatrix}
\vec{Q}^{kk}_{uv} & \vec{Y}(\vec{\gamma}) \\
\vec{Y}(\vec{\gamma})^T & \vec{\Sigma}^{kk}_{uv}\otimes\vec{\Delta}
\end{bmatrix}
\succeq 0,
\end{equation}
where $\otimes$ is the usual Kronecker product. Then,
$\mathcal{Q}^{kk}_{uv}$ can be bounded as
\begin{equation}
\label{E:LemmaQEstimate}
\mathcal{Q}^{kk}_{uv} \geq 
-{\vec{\psi}_M}^T 
\left( {\vec{L}_{\overline{M}}}^T\, \vec{Q}^{kk}_{uv} \,\vec{L}_{\overline{M}} \right)
{\vec{\psi}_M}
-\int_{-1}^{1}
\begin{bmatrix}U^k_M\\V^k_M\end{bmatrix}^T 
\vec{\Sigma}^{kk}_{uv} 
\begin{bmatrix}U^k_M\\V^k_M\end{bmatrix} \dx.
\end{equation}
\end{enumerate}
\end{lemma}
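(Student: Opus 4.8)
The plan is to dispatch the two claims separately, both built on a single observation: by orthogonality of the Legendre polynomials, pairing a polynomial $g$ of degree $d_g$ with a remainder $W_i(x)=\sum_{n>i}\hat{w}_n\mathcal{L}_n(x)$ produces the \emph{finite} sum $\int_{-1}^{1}g\,W_i\,{\rm d}x=\sum_{n=i+1}^{d_g}g_n\hat{w}_n\|\mathcal{L}_n\|^2$, where $g_n$ is the $n$-th Legendre coefficient of $g$; in particular it vanishes when $d_g\leq i$. In every term of $\mathcal{Q}^{\alpha\beta}_{uv}$ the polynomial $g$ is $f\,\mathcal{L}_n$, of degree at most $d_F+n$, so only a bounded number of low-order remainder modes ever appears, and the choice $M=N+2k+d_F$ in~\eqref{E:Mdef} is precisely what makes this bounded set fit inside the vector $\vec\psi_M$.

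\emph{Part (i).} Here $\alpha=\beta=k$ fails, so both $\partial^\alpha u$ and $\partial^\beta v$ are expanded via~\eqref{E:LegExpansion} with $N_\alpha=N+\alpha$, $N_\beta=N+\beta$. Applying the observation to the two sums in~\eqref{E:Qdef}, $\mathcal{Q}^{\alpha\beta}_{uv}$ becomes a finite bilinear expression in the coefficients $\hat{u}^\alpha_n$ ($0\leq n\leq N_\alpha$), $\hat{v}^\beta_n$ ($0\leq n\leq N_\beta$) and the leading remainder modes $\hat{v}^\beta_j$ ($N_\beta<j\leq d_F+N_\alpha$), $\hat{u}^\alpha_j$ ($N_\alpha<j\leq d_F+N_\beta$). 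Using $\alpha,\beta\leq k$ and $M=N+2k+d_F$, each coefficient of order exactly $k$ that appears has index $\leq M$ and is therefore a component of $\vec\psi_M$; each coefficient of order strictly less than $k$ has index $\leq M+(\text{order})-k$, so by Lemma~\ref{T:IntegrationLemma} it is a fixed linear function of $\vec{\check{u}}_{M}$ or $\vec{\check{v}}_{M}$, hence again an affine function of $\vec\psi_M$. Substituting these relations turns $\mathcal{Q}^{\alpha\beta}_{uv}$ into a quadratic form in $\vec\psi_M$; its matrix is affine in $\vec\gamma$ because the Legendre coefficients of $f\,\mathcal{L}_n$ are, and symmetrising gives $\vec{Q}^{\alpha\beta}_{uv}(\vec\gamma)\in\mathbb{S}^{2(k+M+1)}$.

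\emph{Part (ii).} Now use~\eqref{E:DerKExpansion}, so $N_\alpha=N_\beta=M$ and $f\,\mathcal{L}_n$ has degree $\leq d_F+M$. By the observation, $\int_{-1}^{1}f\,\mathcal{L}_n\,V^k_M\,{\rm d}x$ vanishes unless $n\geq M+1-d_F=\overline{M}$ and, when nonzero, only involves the $d_F$ leading remainder modes $\hat{v}^k_j$, $M+1\leq j\leq M+d_F$ (and symmetrically for $U^k_M$). Collect these into $\vec{r}\defeq[\hat{u}^k_{M+1},\dots,\hat{u}^k_{M+d_F},\hat{v}^k_{M+1},\dots,\hat{v}^k_{M+d_F}]^T$, and recall that $\vec{L}_{\overline{M}}\vec\psi_M$ collects exactly the modes $\hat{u}^k_j,\hat{v}^k_j$ with $\overline{M}\leq j\leq M$. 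Since $\int_{-1}^{1}f\,\mathcal{L}_m\,\mathcal{L}_j\,{\rm d}x$ is the $(m,j)$ entry of $\vec{\Phi}{}_{[\overline{M},M]}^{[M+1,M+d_F]}$ and vanishes when $j>d_F+m$ (degree bound on $f\,\mathcal{L}_m$), one reads off the exact identity $\mathcal{Q}^{kk}_{uv}=2\,(\vec{L}_{\overline{M}}\vec\psi_M)^T\,\vec{Y}(\vec\gamma)\,\vec{r}$, with $\vec{Y}$ the block off-diagonal matrix in the statement.

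It remains to bound this cross term. Feasibility of~\eqref{E:AuxiliaryLMI}, applied to the vector $[(\vec{L}_{\overline{M}}\vec\psi_M)^T,\vec{r}^T]^T$, yields $(\vec{L}_{\overline{M}}\vec\psi_M)^T\vec{Q}^{kk}_{uv}(\vec{L}_{\overline{M}}\vec\psi_M)+2(\vec{L}_{\overline{M}}\vec\psi_M)^T\vec{Y}\vec{r}+\vec{r}^T(\vec{\Sigma}^{kk}_{uv}\otimes\vec{\Delta})\vec{r}\geq0$, i.e. $\mathcal{Q}^{kk}_{uv}\geq-\vec\psi_M^T\,\vec{L}_{\overline{M}}^T\vec{Q}^{kk}_{uv}\vec{L}_{\overline{M}}\,\vec\psi_M-\vec{r}^T(\vec{\Sigma}^{kk}_{uv}\otimes\vec{\Delta})\vec{r}$. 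Because $\vec{\Delta}\succ0$, the LMI forces $\vec{\Sigma}^{kk}_{uv}\succeq0$; writing $\vec{\Sigma}^{kk}_{uv}=\mathrm{Diag}(\sigma_1,\sigma_2)$ with $\sigma_1,\sigma_2\geq0$ and recognising $\vec{\Delta}=\mathrm{Diag}(\|\mathcal{L}_{M+1}\|^2,\dots,\|\mathcal{L}_{M+d_F}\|^2)$, Parseval's identity gives $\int_{-1}^{1}(U^k_M)^2\,{\rm d}x=\sum_{n>M}(\hat{u}^k_n)^2\|\mathcal{L}_n\|^2\geq\sum_{j=1}^{d_F}(\hat{u}^k_{M+j})^2\|\mathcal{L}_{M+j}\|^2$ and likewise for $V^k_M$; scaling by $\sigma_1,\sigma_2$ and adding shows $\vec{r}^T(\vec{\Sigma}^{kk}_{uv}\otimes\vec{\Delta})\vec{r}\leq\int_{-1}^{1}[U^k_M,V^k_M]\,\vec{\Sigma}^{kk}_{uv}\,[U^k_M,V^k_M]^T\,{\rm d}x$, which substituted into the previous inequality gives exactly~\eqref{E:LemmaQEstimate}. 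The genuine difficulty is not any of these manipulations individually but the index bookkeeping that underlies them: checking that with $M=N+2k+d_F$ every order-$k$ coefficient in $\mathcal{Q}^{\alpha\beta}_{uv}$ lies in $\{0,\dots,M\}$, every lower-order coefficient lies in the range $\{0,\dots,M+(\text{order})-k\}$ where Lemma~\ref{T:IntegrationLemma} applies, and that in the case $\alpha=\beta=k$ precisely $d_F$ remainder modes of each of $\partial^k u,\partial^k v$ survive --- which is what fixes the sizes of $\vec{Q}^{kk}_{uv}$, $\vec{Y}$ and $\vec{\Delta}$.
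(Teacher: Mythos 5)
Your proposal is correct and follows essentially the same route as the paper's proof: exploiting the band structure of $\int_{-1}^{1} f\,\mathcal{L}_m\mathcal{L}_n\dx$ to reduce $\mathcal{Q}^{\alpha\beta}_{uv}$ to finitely many coefficients handled via Lemma~\ref{T:IntegrationLemma} for part (i), and for part (ii) identifying $\mathcal{Q}^{kk}_{uv}=2(\vec{L}_{\overline{M}}\vec{\psi}_M)^T\vec{Y}\vec{r}$, applying the LMI~\eqref{E:AuxiliaryLMI} to $[(\vec{L}_{\overline{M}}\vec{\psi}_M)^T,\vec{r}^T]^T$, and absorbing the tail modes into $\|U^k_M\|_2^2$ and $\|V^k_M\|_2^2$ via~\eqref{E:LegExpNorm}. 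The only cosmetic difference is that in part (i) the paper additionally invokes $N\geq d_F+k-1$ to rule out boundary-value dependence and write the form through $\vec{L}_0$, whereas you simply absorb any such dependence into $\vec{\psi}_M$, which is equally valid.
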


\begin{remark}
The LMI~\eqref{E:AuxiliaryLMI} was chosen such that~\eqref{E:LemmaQEstimate}, essentially its Schur complement condition, separates the contributions of $\vec{\psi}_M$, $U_M^k$ and $V_M^k$. 
As will be demonstrated in \S\ref{S:FeasSEtApproxEx}, inequality~\eqref{E:LemmaQEstimate} is the main source of conservativeness. To make~\eqref{E:LemmaQEstimate} as sharp as possible, we consider $\vec{Q}^{kk}_{uv}$ and $\vec{\Sigma}^{kk}_{uv}$ as auxiliary variables, to be determined subject to~\eqref{E:AuxiliaryLMI}.
\end{remark}

\begin{remark} 
\label{R:ExpDer}
$\mathcal{Q}_{uv}^{\alpha\beta}$ can be represented exactly only if we consider all Legendre coefficients of $\partial^ku$, $\partial^k v$ up to order $M$ explicitly: this is what motivates the use of the extended decomposition~\eqref{E:DerKExpansion} for these functions. Moreover, note that instead of using the bound~\eqref{E:LemmaQEstimate} we could write $\mathcal{Q}_{uv}^{kk}$ exactly in terms of $\vec{\psi}_{M+d_F}$, but this does not suit our aims because $\vec{\psi}_{M+d_F}$ is not decoupled from $U_M^k$, $V_M^k$ (the Legendre coefficients $\hat{u}^k_{M+i}$, $1\leq i \leq d_F$ appear in the definition of $U_M^k$).
\end{remark}

Lemmas~\ref{T:LemmaP} and~\ref{T:LemmaQ} show that $\mathcal{P}_{uv}^{\alpha\beta}$ and $\mathcal{Q}_{uv}^{\alpha\beta}$ can be expressed or bounded using $\vec{\psi}_M$, $U^k_{M}$ and $V^k_{M}$ for any $\alpha,\beta\in\{0,\hdots,k\}$. 
If $\alpha=\beta=k$,~\eqref{E:Rdef} also depends $U^k_{M}$ and $V^k_{M}$. The following result, proven in Appendix~\ref{A:ProofLemmaR}, shows that $\mathcal{R}_{uv}^{\alpha\beta}$ can be bounded using the same quantities when $\alpha\neq k$ or $\beta \neq k$.

\begin{lemma}
\label{T:LemmaR}
Suppose $\alpha\neq k$ or $\beta \neq k$, and let $\vec{\hat{f}}(\vec{\gamma})=[\hat{f}_1(\vec{\gamma}),,\cdots,\, \hat{f}_{d_F}(\vec{\gamma})]^T$ be the vector of Legendre coefficients of the polynomial $f$. 
There exist a positive semidefinite matrix 
$\vec{R}^{\alpha\beta}_{uv}\in\mathbb{S}^{2(M+k+1)}$ with 
$\|\vec{R}^{\alpha\beta}_{uv}\|_F \sim N^{\alpha+\beta-2k-1}$
and a positive definite matrix $\vec{\Sigma}^{\alpha\beta}_{uv}\in\mathbb{S}^{2}$ with
$\|\vec{\Sigma}^{\alpha\beta}_{uv}\|_F \sim N^{\alpha+\beta - 2k}$ such that 
$\mathcal{R}^{\alpha\beta}_{uv}$ is bounded as 
\begin{equation}
\label{E:Restimate}
\left\vert \mathcal{R}^{\alpha\beta}_{uv} \right\vert\leq 
\|\vec{\hat{f}}(\vec{\gamma})\|_1\,
{\vec{\psi}_M}^T\, 
\vec{R}^{\alpha\beta}_{uv} \,
{\vec{\psi}_M}
+
\|\vec{\hat{f}}(\vec{\gamma})\|_1 
\int_{-1}^{1}
\begin{bmatrix}U^k_M\\V^k_M\end{bmatrix}^T 
\vec{\Sigma}^{\alpha\beta}_{uv} 
\begin{bmatrix}U^k_M\\V^k_M\end{bmatrix} \dx.
\end{equation}
\end{lemma}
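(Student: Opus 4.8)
The plan is to bound the term $\mathcal{R}^{\alpha\beta}_{uv}=\int_{-1}^1 f\,U^\alpha_{N_\alpha}\,V^\beta_{N_\beta}\,\mathrm{d}x$ by separating the "low" Legendre modes of the remainder functions (those that can be rewritten in terms of $\vec{\psi}_M$) from the "high" modes (which must be absorbed into the $L^2$ remainders $U^k_M$, $V^k_M$). First I would expand $f=\sum_{j=1}^{d_F}\hat{f}_j(\vec{\gamma})\,\mathcal{L}_{j-1}(x)$ in Legendre polynomials and use the product rule for Legendre polynomials (the linearisation formula, recalled in Appendix~\ref{A:LegPolys}) together with orthogonality to write $\int_{-1}^1 \mathcal{L}_{j-1}\mathcal{L}_m\mathcal{L}_n\,\mathrm{d}x$ as a combination of finitely many terms with indices shifted by at most $d_F-1$. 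Because $U^\alpha_{N_\alpha}=\sum_{m\geq N_\alpha+1}\hat{u}^\alpha_m\mathcal{L}_m$ and $V^\beta_{N_\beta}=\sum_{n\geq N_\beta+1}\hat{v}^\beta_n\mathcal{L}_n$, orthogonality forces $|m-n|\leq d_F-1$ in any nonzero contribution, so $\mathcal{R}^{\alpha\beta}_{uv}$ is a (doubly infinite, but banded) bilinear form in the tails of the coefficient sequences, weighted by $\int_{-1}^1\mathcal{L}_m^2\,\mathrm{d}x=2/(2m+1)$.

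Next I would invoke Lemma~\ref{T:IntegrationLemma}: since $\alpha\neq k$ or $\beta\neq k$, the coefficients $\hat{u}^\alpha_m$ and $\hat{v}^\beta_n$ for $m,n\leq M+\alpha-k$ (resp. $M+\beta-k$) are expressible linearly through $\vec{\psi}_M$ — indeed the integration relations decay the coefficients, so that $\hat{u}^\alpha_m \sim \hat{u}^k_{m+k-\alpha}/m^{k-\alpha}$ for the relevant range of $m$, which is where the power $N^{\alpha+\beta-2k}$ will come from. I would split the summation in $\mathcal{R}^{\alpha\beta}_{uv}$ accordingly: the part with both indices $\leq M$-ish goes into $\vec{\psi}_M$, yielding the matrix $\vec{R}^{\alpha\beta}_{uv}$; the part with an index exceeding that threshold involves $\hat{u}^k_m$ with $m>M$, i.e. the genuine remainders $U^k_M$, $V^k_M$. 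For the cross terms (one low index, one high index) and the pure high-index terms, I would apply Cauchy--Schwarz in the weighted $\ell^2$ sense, then Young's inequality $|xy|\leq \tfrac12(x^2+y^2)$, to produce a sum of a quadratic form in $\vec{\psi}_M$ (still positive semidefinite, absorbed into $\vec{R}^{\alpha\beta}_{uv}$) and a term $\int_{-1}^1 \|[U^k_M,V^k_M]\|^2$ weighted by a diagonal $\vec{\Sigma}^{\alpha\beta}_{uv}\succ 0$; factoring out $\|\vec{\hat f}(\vec{\gamma})\|_1$ handles the coefficients of $f$ via the triangle inequality. Tracking the largest weight $2/(2m+1)$ over the band and the decay exponents from Lemma~\ref{T:IntegrationLemma} gives the claimed asymptotics $\|\vec{R}^{\alpha\beta}_{uv}\|_F\sim N^{\alpha+\beta-2k-1}$ and $\|\vec{\Sigma}^{\alpha\beta}_{uv}\|_F\sim N^{\alpha+\beta-2k}$.

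The main obstacle I anticipate is the bookkeeping of the index shifts: because $f$ has degree $d_F$, each Legendre coefficient $\hat u^\alpha_m$ of $U^\alpha_{N_\alpha}$ couples to a whole window $\{n:\,|n-m|\leq d_F-1\}$ of coefficients of $V^\beta_{N_\beta}$, and near the cutoff $N_\alpha$ some of these windows straddle the boundary between "captured in $\vec{\psi}_M$" and "genuinely remainder". The choice $M=N+2k+d_F$ in~\eqref{E:Mdef} is exactly what guarantees enough buffer so that, after using Lemma~\ref{T:IntegrationLemma} to push $\alpha$- and $\beta$-derivative coefficients down to $k$-derivative coefficients, the overlap windows fall cleanly on one side or are handled by the Young-inequality split; verifying this alignment — and that the resulting $\vec{R}^{\alpha\beta}_{uv}$ is genuinely PSD rather than merely symmetric — is the delicate part. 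Everything else (Cauchy--Schwarz, Young, the triangle inequality for $\|\vec{\hat f}\|_1$, and estimating $2/(2m+1)\sim 1/m$ for $m\sim N$) is routine, and the asymptotic exponents follow by counting powers of $m$ introduced by each integration in Lemma~\ref{T:IntegrationLemma}.
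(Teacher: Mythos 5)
Your banded-expansion route (expanding $f$ in Legendre modes and exploiting $\int_{-1}^{1} f\,\mathcal{L}_m\,\mathcal{L}_n\dx=0$ for $|m-n|>d_F$ --- note the band width is $d_F$, not $d_F-1$, for a degree-$d_F$ polynomial) is genuinely different from the paper's proof, which never expands $\mathcal{R}^{\alpha\beta}_{uv}$ coefficient-wise at all: it bounds $|\mathcal{R}^{\alpha\beta}_{uv}|\le\norm{f}_\infty\int_{-1}^{1}|U^\alpha_{N_\alpha}|\,|V^\beta_{N_\beta}|\dx$, applies a weighted Young inequality at the function level, and then proves the tail estimate $\tfrac12\norm{U^\alpha_{N_\alpha}}_2^2\le(\vec{\hat u}^k_{[0,M]})^T\vec{Z}_\alpha\vec{\hat u}^k_{[0,M]}+\lambda_\alpha\norm{U^k_M}_2^2$ with $\norm{\vec{Z}_\alpha}_F\sim N^{-2(k-\alpha)-1}$ and $\lambda_\alpha\sim N^{-2(k-\alpha)}$, obtained from Parseval plus repeated use of the recursion~\eqref{E:DifferentiationRule_others} and $(a-b)^2\le 2(a^2+b^2)$. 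Be aware that your plan would have to reprove essentially these recursive estimates anyway: Lemma~\ref{T:IntegrationLemma} as stated only reaches coefficients of index $s\le M+\alpha-k$, so the genuinely infinite part of the band must be pushed down to the order-$k$ coefficients by iterating the recursion, exactly as the paper does; the banded expansion is a legitimate alternative front end but does not shorten that bookkeeping.

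The genuine gap is in the claimed asymptotics when $\alpha\neq\beta$. If, as you propose, you first use Cauchy--Schwarz to separate the $u$- and $v$-tails and then the symmetric Young inequality $|xy|\le\tfrac12(x^2+y^2)$, the $U^k_M$-term inherits the decay $N^{-2(k-\alpha)}$ and the $V^k_M$-term the decay $N^{-2(k-\beta)}$, so your construction gives $\norm{\vec{\Sigma}^{\alpha\beta}_{uv}}_F\sim N^{-2(k-\max(\alpha,\beta))}$, strictly larger than the geometric-mean rate $N^{\alpha+\beta-2k}$ asserted by the lemma (and the analogous mismatch affects $\vec{R}^{\alpha\beta}_{uv}$). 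Producing larger matrices still yields a valid inequality of the form~\eqref{E:Restimate}, but it does not prove the lemma as stated, and the stated decay rates are exactly what the paper invokes when arguing that the conservativeness of Lemma~\ref{T:LemmaR} vanishes as $N$ grows. The paper resolves this with the one non-routine trick in its proof: the weighted Young inequality with balancing parameter $\varepsilon=(N+1)^{\beta-\alpha}$, chosen so that $\varepsilon\lambda_\alpha\sim\varepsilon^{-1}\lambda_\beta\sim N^{\alpha+\beta-2k}$. In your scheme the same effect can be obtained either by introducing such an $\varepsilon$, or by distributing both decay factors (from pushing $\hat u^\alpha_m$ and $\hat v^\beta_n$ down to order $k$) onto each product $\hat u^k_i\hat v^k_j$ \emph{before} splitting termwise, so that the geometric mean appears automatically; your sketch, which splits first and balances never, does not deliver the stated scalings.
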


\begin{remark}
The scaling of the Frobenius norms of $\vec{R}^{\alpha\beta}_{uv}$ and $\vec{\Sigma}^{\alpha\beta}_{uv}$ with $N$ reflects the fact that the magnitude of $\mathcal{R}_{uv}^{\alpha\beta}$ diminishes to zero as $N$ is raised. In contrast to Lemma~\ref{T:LemmaQ}, consequently, the conservativeness of the estimates in Lemma~\ref{T:LemmaR} can be reduced by simply increasing $N$.
\end{remark}

\subsection{A lower bound for $\mathcal{F}_{\vec{\gamma}}\{\vec{w}\}$}
\label{SS:LowerBoundF}
Let us now combine Lemmas~\ref{T:LemmaP}--\ref{T:LemmaR} to find 
a lower bounding functional $\mathcal{G}_{\vec{\gamma}}$ for the integral functional $\mathcal{F}_{\vec{\gamma}}$ in~\eqref{E:OptProblem}. 
To account for the different cases in Lemma~\ref{T:LemmaQ}, we consider the contributions from terms with $\alpha=\beta=k$ first. 

Let $\vec{S}(x;\vec{\gamma})$ be the symmetric matrix obtained from the rows and columns of the matrix $\vec{F}(x;\vec{\gamma})$ in~\eqref{E:OptProblem} corresponding to the entries $\partial^k u$ and $\partial^k v$ of $\mathcal{D}^k\vec{w}$. The contribution of the terms with $\alpha=\beta=k$ to $\mathcal{F}_{\vec{\gamma}}\{\vec{w}\}$ is
\begin{equation*}
\int_{-1}^{1}  
\begin{bmatrix} \partial^k u \\ \partial^k v\end{bmatrix}^T
\vec{S}(x;\vec{\gamma})
\begin{bmatrix}\partial^k u \\ \partial^k v\end{bmatrix}
\,{\rm d}x. 
\end{equation*}
It follows from Lemma~\ref{T:LemmaP} and part (ii) of Lemma~\ref{T:LemmaQ} that
\begin{multline}
\int_{-1}^{1}  
\begin{bmatrix} \partial^k u \\ \partial^k v\end{bmatrix}^T
\vec{S}
\begin{bmatrix}\partial^k u \\ \partial^k v\end{bmatrix}
\,{\rm d}x \geq
{\vec{\psi}_M}^T 
\left[
\vec{P}_{uu}^{kk} + 2\vec{P}_{uv}^{kk} + \vec{P}_{vv}^{kk}
- {\vec{L}_{\overline{M}}}^T
\left( \vec{Q}^{kk}_{uu}  +2\vec{Q}^{kk}_{uv} + \vec{Q}^{kk}_{vv}  
\right) \vec{L}_{\overline{M}} 
\right]\vec{\psi}_M
\\
+ \int_{-1}^{1}
\begin{bmatrix}U^k_{M}\\V^k_{M}\end{bmatrix}^T 
\left[ \vec{S}
-\vec{\Sigma}^{kk}_{uu}  -2\vec{\Sigma}^{kk}_{uv} - \vec{\Sigma}^{kk}_{vv}\right]
\begin{bmatrix}U^k_{M}\\V^k_{M}\end{bmatrix} \dx,
\label{E:LowerBound1}
\end{multline}
where the auxiliary variables $\vec{Q}^{kk}_{uu}$, $\vec{\Sigma}^{kk}_{uu}$, $\vec{Q}^{kk}_{uv}$, $\vec{\Sigma}^{kk}_{uv}$, $\vec{Q}^{kk}_{vv}$, and $\vec{\Sigma}^{kk}_{vv}$ must satisfy three LMIs defined as in~\eqref{E:AuxiliaryLMI}. For notational convenience, we let 
\begin{equation}
\label{E:AuxVarDef}
\mathcal{Y} = \left\{\vec{Q}^{kk}_{uu},\vec{\Sigma}^{kk}_{uu},  \vec{Q}^{kk}_{uv},\vec{\Sigma}^{kk}_{uv},\vec{Q}^{kk}_{vv},\vec{\Sigma}^{kk}_{vv} \right\}
\end{equation}
be the list of all auxiliary variables, and we combine the three LMIs they must satisfy into the equivalent block-diagonal LMI
\begin{equation}
\label{E:CompoundLMI}
\overline{\vec{\Omega}}(\vec{\gamma},\mathcal{Y})\defeq 
\begin{bmatrix}
\vec{\Omega}(\vec{Q}^{kk}_{uu},\vec{\Sigma}^{kk}_{uu},\vec{\gamma}) & \vec{0}& \vec{0}\\
\vec{0}& \vec{\Omega}(\vec{Q}^{kk}_{uv},\vec{\Sigma}^{kk}_{uv},\vec{\gamma}) & \vec{0}\\
\vec{0}& \vec{0} & \vec{\Omega}(\vec{Q}^{kk}_{vv},\vec{\Sigma}^{kk}_{vv},\vec{\gamma})
\end{bmatrix} 
\succeq 0.
\end{equation}

All terms contributing to $\mathcal{F}_{\vec{\gamma}}\{\vec{w}\}$ with $\alpha\neq k$ or $\beta\neq k$ can instead be lower bounded using Lemmas~\ref{T:LemmaP}--\ref{T:LemmaR} to obtain expressions 
such as
\begin{multline}
\label{E:LowerBound2}
\int_{-1}^{1}  f\, \partial^\alpha u\, \partial^\beta v \,{\rm d}x 
\geq {\vec{\psi}_M}^T \,
\left( \vec{P}_{uv}^{\alpha\beta} + \vec{Q}_{uv}^{\alpha\beta}
-\|\vec{\hat{f}}(\vec{\gamma})\|_1 \vec{R}^{\alpha\beta}_{uv} 
\right)
\,\vec{\psi}_M
\\
-\|\vec{\hat{f}}(\vec{\gamma})\|_1 
\int_{-1}^{1}
\begin{bmatrix}U^k_{M}\\V^k_{M}\end{bmatrix}^T 
\vec{\Sigma}^{\alpha\beta}_{uv} 
\begin{bmatrix}U^k_{M}\\V^k_{M}\end{bmatrix} \dx.
\end{multline}

From~\eqref{E:LowerBound1} and~\eqref{E:LowerBound2}  we conclude that it is possible to construct a matrix $\vec{Q}_M = \vec{Q}_M(\vec{\gamma},\mathcal{Y})\in\mathbb{S}^{2(k+M+1)}$ and a positive definite matrix $\vec{\Sigma}_M = \vec{\Sigma}_M(\vec{\gamma},\mathcal{Y})\in\mathbb{S}^2$, such that for all $\vec{w}$
\begin{equation}
\label{E:LowerBoundIntegralTerm}
\mathcal{F}_{\vec{\gamma}}\{\vec{w}\} \geq 
{\vec{\psi}_M}^T \vec{Q}_M \vec{\psi}_M 
+\int_{-1}^{1}\begin{bmatrix} U^k_{M} \\ V^k_{M} \end{bmatrix}^T
(\vec{S}- \vec{\Sigma}_M) 
\begin{bmatrix} U^k_{M} \\ V^k_{M} \end{bmatrix}
\dx.
\end{equation}
Note that $\vec{Q}_{M}$ and $\vec{\Sigma}_M$ are affine with respect to the variables listed in $\mathcal{Y}$ but \emph{not} in $\vec{\gamma}$, because Lemma~\ref{T:LemmaR} introduces absolute values of linear functions of $\vec{\gamma}$.

\subsection{Projection onto the boundary conditions}
\label{SS:BCproj} 

The lower bound~\eqref{E:LowerBoundIntegralTerm} holds for any continuously differentiable function $\vec{w}$, irrespectively of whether it satisfies the BCs prescribed on $H$. 
Recalling~\eqref{E:HspaceDef}, these are given by the set of $p$ homogeneous equations 
\begin{equation}
\label{E:BCinit}
\vec{A} \mathcal{B}^{\vec{l}}\vec{w}=\vec{0}.
\end{equation} 

To enforce as many BCs as possible in~\eqref{E:LowerBoundIntegralTerm} and to sharpen the lower bound over the space $H$, we need to rewrite~\eqref{E:BCinit} in terms of our Legendre expansions. 
We begin by introducing a permutation matrix $\vec{P}$ such that
\begin{equation}
\label{E:PermMatDef}
\mathcal{B}^{\vec{l}}\vec{w} = \vec{P} 
\begin{bmatrix} 
\mathcal{B}^{\vec{k-1}}\vec{w} \\ \mathcal{B}^{[\vec{k},\vec{l}]}\vec{w} 
\end{bmatrix}.
\end{equation}
so~\eqref{E:BCinit} becomes
\begin{equation}
\label{E:BCprojEq1}
\vec{A} \vec{P} 
\begin{bmatrix} 
\mathcal{B}^{\vec{k-1}}\vec{w} \\ \mathcal{B}^{[\vec{k},\vec{l}]}\vec{w} 
\end{bmatrix} =  \vec{0}.
\end{equation}
A straightforward corollary of Lemma~\ref{T:BCLemma} and~\eqref{E:PsiDef} is that there exists a matrix $\vec{J}$ such that $\mathcal{B}^{\vec{k-1}}\vec{w} = \vec{J}\vec{\psi}_M$. Then,~\eqref{E:BCprojEq1} can be rewritten as
\begin{equation}
\label{E:newBCs}
\vec{K}
\begin{bmatrix} 
\vec{\psi}_M \\ \mathcal{B}^{[\vec{k},\vec{l}]}\vec{w} 
\end{bmatrix} =  \vec{0}, \quad
\vec{K} \defeq \vec{A} \vec{P} 
\begin{bmatrix} 
\vec{J} & \vec{0} \\ \vec{0} & \vec{I}
\end{bmatrix}.
\end{equation}

From~\eqref{E:newBCs} we see that any admissible vector $\vec{\psi}_M$  can be written in the form
\begin{equation}
\label{E:GenBCsolFinal_psi}
\vec{\psi}_M = \vec{\Pi}_M \vec{\zeta},
\end{equation}
for some $\vec{\zeta}\in\mathbb{R}^{\dim[\mathcal{N}(\vec{K})]}$, where $\vec{\Pi}_M$ is a computable projection matrix. Note that in general $\vec{\Pi}_M$ may have linearly dependent columns and so it may be further simplified; this makes no difference to the following discussion, and we omit the details to streamline the presentation.
Substituting~\eqref{E:GenBCsolFinal_psi} into~\eqref{E:LowerBoundIntegralTerm}, we conclude that when~\eqref{E:CompoundLMI} holds, $\mathcal{F}_{\vec{\gamma}}\{\vec{w}\}$ is lower bounded over the space $H$ in~\eqref{E:HspaceDef} as
\begin{equation}
\label{E:LowerBoundFwithBC}
\mathcal{F}_{\vec{\gamma}}\{\vec{w}\} \geq 
\vec{\zeta}^T \,\vec{\Pi}_M^T\vec{Q}_M\vec{\Pi}_M\, \vec{\zeta} 
+ 
\int_{-1}^{1} \begin{bmatrix} U^k_{M} \\ V^k_{M} \end{bmatrix}^T 
[\vec{S}(x;\vec{\gamma}) 
- \vec{\Sigma}_M] 
\begin{bmatrix} U^k_{M} \\ V^k_{M} \end{bmatrix}
\,{\rm d}x.
\end{equation}

From~\eqref{E:newBCs} and~\eqref{E:GenBCsolFinal_psi} it is also possible to formulate a set of BCs that further restrict the choice for $U^k_{M}$ and $V^k_{M}$.  Moreover, recall that the remainder functions $U^k_{M}$ and $V^k_{M}$ should be orthogonal to all Legendre polynomials of degree less than or equal to $M$. However, it is not currently clear to the authors how these two constraints can be enforced explicitly in~\eqref{E:LowerBoundFwithBC} to obtain a stronger, but still useful, lower bound on $\mathcal{F}_{\vec{\gamma}}$. Consequently, we choose to simply drop them and let $U^k_{M}$ and $V^k_{M}$ be arbitrary functions.

\subsection{Formulating an inner SDP relaxation}
\label{S:MainResult}

The integral inequality in~\eqref{E:OptProblem} is satisfied if the right-hand side of~\eqref{E:LowerBoundFwithBC} is non-negative for all $\vec{\zeta}$ and all functions $U^k_{M}$ and $V^k_{M}$. 
Recalling that~\eqref{E:LowerBoundFwithBC} is valid only if~\eqref{E:CompoundLMI} holds, we have the following result.
\begin{proposition}
\label{T:InnerApprox}
Let $M=M(N)$ be as in~\eqref{E:Mdef} for any integer $N$, and let $\mathcal{Y}$ be as in~\eqref{E:AuxVarDef}. The set $T^\mathrm{in}_N\subset \mathbb{R}^s$  of values $\vec{\gamma}\in\mathbb{R}^s$ for which there exist $\mathcal{Y}$ such that
\begin{subequations}
\label{E:SuffCondAll}
\begin{align}
\label{E:SuffCond_a}
\overline{\vec{\Omega}}(\vec{\gamma};\mathcal{Y}) &\succeq 0,
\\
\label{E:SuffCond_b}
\vec{\Pi}^T_M \, \vec{Q}_{M}(\vec{\gamma},\mathcal{Y}) \,\vec{\Pi}_M 
&\succeq 0,
\\
\label{E:SuffCond_c}
\vec{S}(x;\vec{\gamma}) - \vec{\Sigma}_M(\vec{\gamma},\mathcal{Y}) &\geq 0, \quad\forall x\in[-1,1],
\end{align}
\end{subequations}
is an inner approximation of the feasible set $T$ of~\eqref{E:OptProblem}, i.e., $T^\mathrm{in}_N\subset T$.
\end{proposition}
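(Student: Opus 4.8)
The plan is to show directly that $T^\mathrm{in}_N\subset T$, where $T$ is the feasible set of~\eqref{E:OptProblem} defined in~\eqref{E:FeasSetDef}, by assembling the estimates already obtained in~\S\ref{S:IntTermExpansion}--\S\ref{SS:BCproj}. Fix $\vec{\gamma}\in\mathbb{R}^s$ together with auxiliary variables $\mathcal{Y}$ as in~\eqref{E:AuxVarDef} satisfying~\eqref{E:SuffCond_a}--\eqref{E:SuffCond_c}, and let $\vec{w}=[u,v]^T\in H$ be arbitrary. Expanding $u$, $v$ and their derivatives in Legendre series as in~\eqref{E:LegExpansion} and~\eqref{E:DerKExpansion} (with $M$ as in~\eqref{E:Mdef}) produces the finite-dimensional vector $\vec{\psi}_M$ of~\eqref{E:PsiDef} together with the remainder functions $U^k_M$ and $V^k_M$. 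Since $\vec{w}$ satisfies the homogeneous boundary conditions defining $H$ in~\eqref{E:HspaceDef}, the construction of~\S\ref{SS:BCproj} gives $\vec{\psi}_M=\vec{\Pi}_M\vec{\zeta}$ for some $\vec{\zeta}$, as in~\eqref{E:GenBCsolFinal_psi}.

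The key observation is that condition~\eqref{E:SuffCond_a} is precisely the compound LMI~\eqref{E:CompoundLMI}, which is exactly the hypothesis under which part~(ii) of Lemma~\ref{T:LemmaQ}, and hence the lower bound~\eqref{E:LowerBoundFwithBC} assembled from Lemmas~\ref{T:LemmaP}--\ref{T:LemmaR}, is valid. Applying~\eqref{E:LowerBoundFwithBC} to our $\vec{w}$ bounds $\mathcal{F}_{\vec{\gamma}}\{\vec{w}\}$ below by the sum of $\vec{\zeta}^T\vec{\Pi}_M^T\vec{Q}_M\vec{\Pi}_M\vec{\zeta}$ and the integral of $[U^k_M,V^k_M]^T(\vec{S}(x;\vec{\gamma})-\vec{\Sigma}_M)[U^k_M,V^k_M]^T$ over $[-1,1]$. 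The first term is non-negative by~\eqref{E:SuffCond_b}, since $\vec{\Pi}_M^T\vec{Q}_M(\vec{\gamma},\mathcal{Y})\vec{\Pi}_M\succeq 0$. For the second term, \eqref{E:SuffCond_c} asserts that $\vec{S}(x;\vec{\gamma})-\vec{\Sigma}_M(\vec{\gamma},\mathcal{Y})\succeq 0$ for every $x\in[-1,1]$, so the integrand is pointwise non-negative and therefore so is the integral. Hence $\mathcal{F}_{\vec{\gamma}}\{\vec{w}\}\geq 0$; since $\vec{w}\in H$ was arbitrary, $\vec{\gamma}\in T$, which is the claim.

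I expect no genuine obstacle here: the nontrivial content has already been discharged in Lemmas~\ref{T:LemmaP}--\ref{T:LemmaR} and in the projection step of~\S\ref{SS:BCproj}, and the proposition is essentially the bookkeeping remark that~\eqref{E:SuffCond_a}--\eqref{E:SuffCond_c} make each piece of~\eqref{E:LowerBoundFwithBC} non-negative. The only point deserving a sentence of care is that in~\eqref{E:LowerBoundFwithBC} we treat $U^k_M$ and $V^k_M$ as arbitrary $L^2(-1,1)$ functions, whereas in fact they are constrained (orthogonality to $\mathcal{L}_0,\dots,\mathcal{L}_M$, plus residual boundary conditions); dropping these constraints, as already noted in~\S\ref{SS:BCproj}, only enlarges the class of functions over which non-negativity is demanded, so~\eqref{E:SuffCond_c} remains a sufficient (if conservative) condition and the inclusion $T^\mathrm{in}_N\subset T$ is preserved.
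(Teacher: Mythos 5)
Your argument is correct and is essentially the paper's own reasoning: the proposition is stated as a direct consequence of the lower bound~\eqref{E:LowerBoundFwithBC} (valid under the compound LMI~\eqref{E:CompoundLMI}, i.e.~\eqref{E:SuffCond_a}), with~\eqref{E:SuffCond_b} and~\eqref{E:SuffCond_c} rendering the two terms on its right-hand side non-negative for every admissible $\vec{\zeta}$, $U^k_M$, $V^k_M$, exactly as you assemble it. Your closing remark about dropping the orthogonality and residual boundary constraints on the remainder functions only strengthening the requirement matches the paper's own caveat, so nothing further is needed (aside from the harmless typo of a duplicated transpose on the remainder vector in your integrand).
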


Conditions~\eqref{E:SuffCond_b} and~\eqref{E:SuffCond_c} 
are only sufficient, not necessary, to make the right-hand side of~\eqref{E:LowerBoundFwithBC} non-negative: they do not take into account the boundary and orthogonality conditions on the remainder functions mentioned at the end of \S\ref{SS:BCproj}. However, they are useful because they can be turned into tractable constraints. 
For example,~\eqref{E:SuffCond_b} is not an LMI because $\vec{Q}_{M}(\vec{\gamma},\mathcal{Y})$ depends on absolute values of the Legendre coefficients of the entries of the matrix $\vec{F}(x;\vec{\gamma})$ in~\eqref{E:FmatDef} as a consequence of Lemma~\ref{T:LemmaR}.  However, it can readily be recast as one by replacing each of these absolute values, say $\vert\hat{f}_n(\vec{\gamma})\vert$, with a slack variable $t$ subject to the additional linear constraints $-t \leq \hat{f}_n(\vec{\gamma}) \leq t$~\cite{Boyd2004}.
Moreover, \eqref{E:SuffCond_c} is an LMI if the matrix $\vec{S}(x;\vec{\gamma})$ is independent of $x$, which is true in many interesting and non-trivial cases, such as our motivating example in \S\ref{S:ProblemDef}.
Otherwise, \eqref{E:SuffCond_c} is equivalent to the polynomial inequality
\begin{equation}
\label{E:Mat2Poly}
\vec{z}^T[\vec{S}(x;\vec{\gamma}) - \vec{\Sigma}_M(\vec{\gamma},\mathcal{Y})]\vec{z} \geq 0, \,\, \forall (x,\vec{z}) \in [-1,1]\times \mathbb{R}^2.
\end{equation}
Although checking a polynomial inequality is generally NP-hard (see~\cite[Sect. 2.1]{Parrilo2003} and references therein), we can turn~\eqref{E:Mat2Poly} into an LMI plus linear equality constraints by a SOS relaxation~\cite{Parrilo2003}. Using the so-called $\mathcal{S}$-procedure~\cite{Tan2006a}, we introduce a tunable symmetric polynomial matrix $\vec{T}(x)\in\mathbb{S}^2$ and require that the multivariate polynomials
\begin{align*}
p_1
&\defeq \vec{z}^T[\vec{S}(x;\vec{\gamma}) - \vec{\Sigma}_M(\vec{\gamma},\mathcal{Y}) - (1-x^2)\vec{T}(x)]\vec{z}, \\
p_2
&\defeq  \vec{z}^T\vec{T}(x)\vec{z},
\end{align*}
are SOS;
it is not difficult to see that this implies~\eqref{E:Mat2Poly}.

An upper bound for the optimal value of~\eqref{E:OptProblem}, as well as a feasible point that achieves it, can therefore be found by solving an SDP.
\begin{theorem}
\label{T:InnerSDPrelax}
Let $M=M(N)$ be defined as in~\eqref{E:Mdef} for any integer $N$, let $\mathcal{Y}$ be as in~\eqref{E:AuxVarDef}, and let $\vec{T}(x)\in\mathbb{S}^2$ be a tunable polynomial matrix. The optimal value of the SDP
\begin{align}
\label{E:InnerSDP}
&\qquad\qquad\qquad\quad\min_{\vec{\gamma},\mathcal{Y}, \vec{T}(x)} \quad \vec{c}^T \vec{\gamma}, 
\notag\\
\text{s.t.} \quad
&\overline{\vec{\Omega}}(\vec{\gamma};\mathcal{Y}) \succeq 0,
\\
&\vec{\Pi}^T_M \, \vec{Q}_{M}(\vec{\gamma},\mathcal{Y}) \,\vec{\Pi}_M 
\succeq 0,
\notag\\
&\vec{z}^T\left[\vec{S}(x;\vec{\gamma}) - \vec{\Sigma}_M(\vec{\gamma},\mathcal{Y}) - (1-x^2)\vec{T}(x)\right]\vec{z} \quad \text{is SOS},
\notag\\
&\vec{z}^T\vec{T}(x)\vec{z} \quad \text{is SOS},
\notag
\end{align}
is an upper bound for the optimal value of~\eqref{E:OptProblem}. Moreover, if  a minimizer $\vec{\gamma}^\star_N$ exists in~\eqref{E:InnerSDP}, it is a feasible point for~\eqref{E:OptProblem}.
\end{theorem}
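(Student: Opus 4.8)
\textbf{Proof proposal for Theorem~\ref{T:InnerSDPrelax}.}
The plan is to show that the SDP~\eqref{E:InnerSDP} is nothing but an explicit reformulation of the membership condition $\vec{\gamma}\in T^\mathrm{in}_N$ from Proposition~\ref{T:InnerApprox}, together with the already-established inclusion $T^\mathrm{in}_N\subset T$. Thus the theorem will follow by chaining two facts: (a) feasibility of~\eqref{E:InnerSDP} implies~\eqref{E:SuffCondAll}, and (b)~\eqref{E:SuffCondAll} implies feasibility for~\eqref{E:OptProblem} via Proposition~\ref{T:InnerApprox}. Since both SDP~\eqref{E:InnerSDP} and~\eqref{E:OptProblem} minimize the same linear cost $\vec{c}^T\vec{\gamma}$, a feasible region of~\eqref{E:InnerSDP} that projects (in the $\vec{\gamma}$ variable) into the feasible region of~\eqref{E:OptProblem} immediately yields that the optimal value of~\eqref{E:InnerSDP} is $\geq$ that of~\eqref{E:OptProblem}, i.e.\ it is an upper bound, and that any minimizer's $\vec{\gamma}$-component is feasible for~\eqref{E:OptProblem}.

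First I would address the equivalence of the SOS constraints with~\eqref{E:SuffCond_c}. If $(\vec{\gamma},\mathcal{Y},\vec{T}(x))$ is feasible for~\eqref{E:InnerSDP}, then for every $x\in[-1,1]$ and every $\vec{z}\in\mathbb{R}^2$ we have $\vec{z}^T\vec{T}(x)\vec{z}\geq 0$ (SOS implies non-negative) and $\vec{z}^T[\vec{S}(x;\vec{\gamma})-\vec{\Sigma}_M(\vec{\gamma},\mathcal{Y})-(1-x^2)\vec{T}(x)]\vec{z}\geq 0$; since $1-x^2\geq 0$ on $[-1,1]$, adding $(1-x^2)$ times the first inequality to the second gives $\vec{z}^T[\vec{S}(x;\vec{\gamma})-\vec{\Sigma}_M(\vec{\gamma},\mathcal{Y})]\vec{z}\geq 0$ for all $x\in[-1,1]$ and all $\vec{z}$, which is exactly~\eqref{E:Mat2Poly}, hence~\eqref{E:SuffCond_c}. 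The remaining two constraints of~\eqref{E:InnerSDP} are literally~\eqref{E:SuffCond_a} and~\eqref{E:SuffCond_b}. Therefore the $\vec{\gamma}$-component of any point feasible for~\eqref{E:InnerSDP} lies in $T^\mathrm{in}_N$, and by Proposition~\ref{T:InnerApprox}, $T^\mathrm{in}_N\subset T$, so $\vec{\gamma}\in T$, i.e.\ $\mathcal{F}_{\vec{\gamma}}\{\vec{w}\}\geq 0$ for all $\vec{w}\in H$. This proves the last sentence of the theorem (feasibility of the minimizer for~\eqref{E:OptProblem}) and also that the optimal value of~\eqref{E:InnerSDP} cannot be smaller than $p^\star$, i.e.\ it is an upper bound.

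One subtlety I would flag and handle explicitly: constraint~\eqref{E:SuffCond_b} as written involves absolute values $\lvert\hat{f}_n(\vec{\gamma})\rvert$ of linear functions of $\vec{\gamma}$ (inherited from Lemma~\ref{T:LemmaR}), so strictly speaking the SDP~\eqref{E:InnerSDP} is understood after the epigraph reformulation described in the text just before the theorem — each $\lvert\hat{f}_n(\vec{\gamma})\rvert$ replaced by a slack variable $t_n$ with $-t_n\leq\hat{f}_n(\vec{\gamma})\leq t_n$. I would note that this transformation is exact in the following sense: $(\vec{\gamma},\mathcal{Y})$ satisfies~\eqref{E:SuffCond_b} if and only if there exist slack variables making the relaxed LMI hold, because the matrix $\vec{Q}_M$ depends on these absolute values \emph{monotonically} in the sense that was used to derive the bounds in~\S\ref{SS:LowerBoundF} (larger $t_n$ only weakens the lower bound~\eqref{E:LowerBoundFwithBC}); hence feasibility of the slack-variable SDP is equivalent to~\eqref{E:SuffCond_b}. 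I expect this bookkeeping — making precise that the SOS/slack reformulations neither add nor remove feasible $\vec{\gamma}$'s — to be the only mildly delicate point; the core argument is just the two-line Schur/$\mathcal{S}$-procedure manipulation above combined with Proposition~\ref{T:InnerApprox}.
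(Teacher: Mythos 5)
Your proposal is correct and follows essentially the same route as the paper: the theorem is obtained exactly by combining Proposition~\ref{T:InnerApprox} with the observations preceding the theorem, namely that the two SOS constraints together with $1-x^2\geq 0$ on $[-1,1]$ imply~\eqref{E:Mat2Poly} (hence~\eqref{E:SuffCond_c}), that the remaining constraints are~\eqref{E:SuffCond_a}--\eqref{E:SuffCond_b}, and that the slack-variable lifting of the absolute values is harmless because $\vec{Q}_M$ depends monotonically on them through the positive semidefinite matrices $\vec{R}^{\alpha\beta}_{uv}$. Your extra care about the epigraph reformulation matches the paper's remark before the theorem, so nothing is missing.
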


\begin{remark}
\label{R:InnerFeasibilityRemark}
In contrast to our results for the outer SDP relaxations of \S\ref{S:OuterApprox}, we cannot prove that the optimal value of~\eqref{E:InnerSDP} converges to that of the original problem as $N$ is increased, nor that it is non-increasing. In fact, without further assumptions on the functional $\mathcal{F}_{\vec{\gamma}}$ in~\eqref{E:OptProblem}, it is possible that~\eqref{E:InnerSDP} is always infeasible even if~\eqref{E:OptProblem} is feasible. To see this, recall that the matrix $\vec{\Sigma}_M$ is positive definite, so~\eqref{E:SuffCond_c} and its corresponding SOS relaxation are feasible only if $\vec{S}(x;\vec{\gamma})$ can be made sufficiently positive definite for all $x\in[-1,1]$. An example for which this does not happen is the integral inequality
\begin{equation}
\label{E:NonUniformlyEllipticEx}
\int_{-1}^{1} \left[ x^2 (\partial u)^2 + (\partial v)^2 - \gamma uv \right]\,{\rm d}x \geq 0,
\end{equation}
where $u$ and $v$ are subject to the Dirichlet BCs $u\at{-1}=u\at{1}=v\at{-1}=v\at{1}=0$. 
This inequality is clearly feasible for $\gamma = 0$. Yet,~\eqref{E:InnerSDP} is infeasible for any $N$ because
$\vec{S}(x;\vec{\gamma})=\left[\begin{smallmatrix} x^2 & 0 \\ 0 & 1\end{smallmatrix}\right]$ is not positive definite at $x=0$. In fact, for this particular example \emph{any} approach requiring estimates of tail terms of series expansions will necessarily be ineffective. In contrast, with the
SOS method of~\cite{Valmorbida2016} we could establish that~\eqref{E:NonUniformlyEllipticEx} is feasible for
$\vert\gamma\vert  \leq 2.2$ at least.
With the exception of such pathological cases, however, our inner SDP relaxations are observed to work well in practice; we demonstrate this in \S\ref{S:Results}. This suggests that it may be possible to formulate precise conditions under which our inner SDP relaxations are feasible, and even converge to the original optimization problem. We leave this task to future research.
\end{remark}

\section{Extensions}
\label{S:Extensions}

\subsection{Inequalities with explicit dependence on boundary values}
\label{S:IneqBoundVal}

In the applications we have in mind, the integral inequality constraint in~\eqref{E:OptProblem} is derived from a weak formulation of a PDE, after integrating some terms by parts. Occasionally, the BCs are such that the boundary terms from such integrations by parts do not vanish; we will give an example in \S\ref{S:StabilitySystemPDEs}. 

This motivates us to extend our results to quadratic homogeneous functionals that depend explicitly on the boundary values $\mathcal{B}^{\vec{l}}\vec{w}$, 
such as
\begin{multline}
\label{E:FwithBCdep}
\mathcal{F}_{\vec{\gamma}}\{\vec{w}\}  \defeq
\int_{-1}^{1}\left[ 
\left(\mathcal{B}^{\vec{l}} \vec{w}\right)^T \, \F_\mathrm{bnd}(x;\vec{\gamma})\, \mathcal{B}^{\vec{l}} \vec{w}
+
\left(\mathcal{B}^{\vec{l}} \vec{w}\right)^T \,\F_\mathrm{mix}(x;\vec{\gamma})\, \mathcal{D}^{\vec{k}} \vec{w}
\right.
\\
\left.
+
\left(\mathcal{D}^{\vec{k}} \vec{w}\right)^T \,\F_\mathrm{int}(x;\vec{\gamma})\, \mathcal{D}^{\vec{k}} \vec{w}
\right]
\dx,
\end{multline}
where $\vec{F}_\mathrm{int}$, $\vec{F}_\mathrm{mix}$ and $\vec{F}_\mathrm{bnd}$ are matrices of polynomials of degree at most $d_F$ of the form~\eqref{E:FmatDef}. Note that $\mathcal{F}_{\vec{\gamma}}\{\vec{w}\}$ in~\eqref{E:FwithBCdep} reduces to the functional in~\eqref{E:OptProblem} if $\vec{F}_\mathrm{mix}=\vec{0}$, $\vec{F}_\mathrm{bnd}=\vec{0}$ and $\vec{F}_\mathrm{int} = \vec{F}$. 

The extension of Theorem~\ref{T:ConvergenceOuterSDPrelax} is obvious, because the boundary values of polynomial functions are easily given in terms of the polynomial coefficients.

To extend Proposition~\ref{T:InnerApprox} and Theorem~\ref{T:InnerSDPrelax}, we recall the definition of the permutation matrix $\vec{P}$ in~\eqref{E:PermMatDef}. Upon integrating the known matrix $\vec{P}^T\F_\mathrm{bnd}(x;\vec{\gamma})\vec{P}$, it follows from~\eqref{E:PsiDef} and Lemma~\ref{T:BCLemma}  that there exists a symmetric matrix $\vec{Q}^\mathrm{bnd}_M(\vec{\gamma})$ such that
\begin{equation}
\label{E:ExpansionFb}
\int_{-1}^{1} \left(\mathcal{B}^{\vec{l}}\vec{w}\right)^T
\F_\mathrm{bnd}(x;\vec{\gamma})
\,\mathcal{B}^{\vec{l}}\vec{w} \,{\rm d}x= \begin{bmatrix}
\vec{\psi}_{M}\\ \mathcal{B}^{[\vec{k},\vec{l}]}\vec{w}
\end{bmatrix}^T
\vec{Q}^\mathrm{bnd}_M(\vec{\gamma})
\begin{bmatrix}
\vec{\psi}_{M}\\ \mathcal{B}^{[\vec{k},\vec{l}]}\vec{w}
\end{bmatrix}.
\end{equation}

Moreover, let $\vec{g}(x;\vec{\gamma})$ be the column of the matrix $\vec{P}^T\F_\mathrm{mix}(x;\vec{\gamma})$ corresponding to the entry $\partial^\alpha u$ of $\mathcal{D}^{\vec{k}}\vec{w}$. Each element $g_i(x;\vec{\gamma})$ is a polynomial of degree at most $d_F$, written in the Legendre basis with coefficients $\hat{g}_{i,0}(\vec{\gamma}),\,\hdots,\,\hat{g}_{i,d_F}(\vec{\gamma})$. Recalling from~\eqref{E:Ncondition} that we have decomposed the Legendre expansion of $\partial^\alpha u$ with the truncation parameter $N\geq d_F+k-1$, we conclude that
\begin{align}
\label{E:ExpansionFmColumn}
\int_{-1}^{1} g_i(x;\vec{\gamma}) \partial^\alpha u \,{\rm d}x 
&= 
\sum_{m=0}^{d_F} \sum_{n=0}^{\infty} \hat{g}_{i,m}(\vec{\gamma}) \hat{u}^\alpha_n \int_{-1}^{1} \mathcal{L}_m\,\mathcal{L}_n\,{\rm d}x
\notag \\
&=
\begin{bmatrix}
2\hat{g}_{i,0}(\vec{\gamma}), & \displaystyle\frac{2\hat{g}_{i,1}(\vec{\gamma})}{3}, & \hdots, & 
\displaystyle\frac{2\hat{g}_{i,d_F}(\vec{\gamma})}{2d_F+1}\end{bmatrix}
\vec{\hat{u}}^\alpha_{[0,d_F]}.
\end{align}
With the help of Lemma~\ref{T:IntegrationLemma},~\eqref{E:PsiDef} and Lemma~\ref{T:BCLemma}
it is then possible to find a matrix
$\vec{Q}^\mathrm{mix}_M(\vec{\gamma})$ that satisfies
\begin{align}
\int_{-1}^{1} \left(\mathcal{B}^{\vec{l}}\vec{w}\right)^T \F_\mathrm{mix}(x;\vec{\gamma})\mathcal{D}^{\vec{k}}\vec{w}  \,{\rm d}x 
&=  
\begin{bmatrix} 
\mathcal{B}^{\vec{k-1}}\vec{w} \\ \mathcal{B}^{[\vec{k},\vec{l}]}\vec{w} 
\end{bmatrix}^T
\int_{-1}^{1} \vec{P}^T \F_\mathrm{mix}(x;\vec{\gamma})\mathcal{D}^{\vec{k}}\vec{w} \,{\rm d}x 
\notag \\
&=
\begin{bmatrix}
\vec{\psi}_{M}\\\mathcal{B}^{[\vec{k},\vec{l}]}\vec{w}
\end{bmatrix}^T
\vec{Q}^\mathrm{mix}_M(\vec{\gamma}) \,
\vec{\psi}_{M}.
\label{E:ExpansionFm}
\end{align}

Note that~\eqref{E:ExpansionFb} and~\eqref{E:ExpansionFm} are exact formulae, and no approximation is made. Combining these results with~\eqref{E:LowerBoundIntegralTerm}, we conclude that there is a symmetric matrix $\vec{Q}^\mathrm{tot}_M = \vec{Q}^\mathrm{tot}_M(\vec{\gamma},\mathcal{Y})$ such that
\begin{equation}
\label{E:FboundExt}
\mathcal{F}_{\vec{\gamma}}\{\vec{w}\} \geq 
\begin{bmatrix}
\vec{\psi}_{M}\\\mathcal{B}^{[\vec{k},\vec{l}]}\vec{w}
\end{bmatrix}^T \vec{Q}^\mathrm{tot}_M \begin{bmatrix}
\vec{\psi}_{M}\\\mathcal{B}^{[\vec{k},\vec{l}]}\vec{w}
\end{bmatrix} 
+ 
\int_{-1}^{1} \begin{bmatrix} U^k_{M} \\ V^k_{M} \end{bmatrix}^T 
[\vec{S}(x;\vec{\gamma}) - \vec{\Sigma}_M] 
\begin{bmatrix} U^k_{M} \\ V^k_{M} \end{bmatrix}
\,{\rm d}x.
\end{equation}
Finally,~\eqref{E:newBCs} implies that we can write
\begin{equation*}
\begin{bmatrix} \vec{\psi}_M \\ \mathcal{B}^{[\vec{k},\vec{l}]}\vec{w} \end{bmatrix} = \vec{\Lambda} \vec{\zeta}
\end{equation*}
for some $\vec{\zeta}\in\mathbb{R}^{\dim[\mathcal{N}(\vec{K})]}$, where the projection matrix $\vec{\Lambda}$ satisfies $\mathcal{R}(\vec{\Lambda})=\mathcal{N}(\vec{K})$, and we conclude that Proposition~\ref{T:InnerApprox} and Theorem~\ref{T:InnerSDPrelax} are true when we replace~\eqref{E:SuffCond_b} and the corresponding constraint in~\eqref{E:InnerSDP} with 
$\vec{\Lambda}^T
\vec{Q}^\mathrm{tot}_M(\vec{\gamma})
\vec{\Lambda}
\succeq 0.
$

\subsection{Higher-dimensional function spaces \& generic multi-index derivatives}

Theorems~\ref{T:ConvergenceOuterSDPrelax} and~\ref{T:InnerSDPrelax} were derived with the assumption that $\vec{w}\in C^m([-1,1],\mathbb{R}^2)$ and for the particular multi-indices $\vec{k}=[k,k]$, $\vec{l}=[l,l]$. All our statements, including the extensions discussed in \S\ref{S:IneqBoundVal}, hold also when we let $\vec{w}\in C^m([-1,1],\mathbb{R}^q)$ with $q\geq 1$ and when $\vec{k},\,\vec{l}\in\mathbb{N}^q$ are generic multi-indices, as long as they satisfy~\eqref{E:klCond1} and~\eqref{E:klCond2}.

In particular, all our proofs extend verbatim by simply identifying the functions $u,\,v$ used throughout \S\ref{S:OuterApprox} and \S\ref{S:InnerApprox} with any two components $w_i,\,w_j$ of $\vec{w}$ if the $q$-dimensional multi-indices $\vec{k}$ and $\vec{l}$ are uniform, i.e., $\vec{k}=[k,\,\hdots,\,k]$ and $\vec{l}=[l,\,\hdots,\,l]$. The extension to non-uniform multi-indices $\vec{k},\,\vec{l}\in\mathbb{N}^q$ requires only minor modifications; the details are left to the interested reader.

\section{Computational experiments with \textsc{QUINOPT}}
\label{S:Results}

In this section we apply our techniques to solve some problems arising from the analysis of PDEs. To aid the formulation of our SDP relaxations, we have developed
\textsc{QUINOPT} (QUadratic INtegral OPTimization), an open-source add-on for the \textsc{MATLAB} optimization toolbox \textsc{YALMIP}~\cite{Lofberg2004,Lofberg2009}. \textsc{QUINOPT} uses the Legendre polynomial basis for the outer SDP relaxations of \S\ref{S:OuterApprox}, because the orthogonality of the Legendre polynomials promotes sparsity of the SDP data. \textsc{QUINOPT} and the scripts used to produce the results in the following sections can be downloaded from 
\begin{center}
\url{https://github.com/aeroimperial-optimization/QUINOPT}. 
\end{center}
Our experiments were run on a PC with a 3.40GHz Intel\textsuperscript{\textregistered} Core{\texttrademark} i7-4770 CPU and 16Gb of RAM, using \textsc{MOSEK}~\cite{Andersen2009} to solve our SDP relaxations.

\subsection{Motivating example: stability of a stress-driven shear flow}
\label{S:ShearFlowEx}

Consider our motivating example of \S\ref{S:MotivatingExample}, regarding the stability of a flow driven by a shear stress of magnitude $0.5\gamma$. An ad-hoc inner SDP relaxation was proposed and solved in~\cite{Fantuzzi2016}; here, we replicate those results using our general-purpose toolbox \textsc{QUINOPT}. 
Since we minimize the \emph{negative} of $\gamma$ in~\eqref{E:ProbEx2}, the inner and outer SDPs~\eqref{E:InnerSDP} and~\eqref{E:OuterSDP} give, respectively, lower and upper bounds for the stress $\gamma_\mathrm{cr}$ at which the flow is no longer provably stable.

Figure~\ref{F:ShearFlowSol} shows the upper and lower bounds for $\gamma_\mathrm{cr}$ as a function of the wave number $\xi$, a parameter in~\eqref{E:ProbEx2}, computed for four different values of the Legendre series truncation parameter $N$. No upper bound curve is plotted for $N=3$ because in this case only the zero polynomial satisfies the BCs in~\eqref{E:Prob2BCs}, and~\eqref{E:OuterSDP} reduces to an unconstrained minimization problem yielding an infinite upper bound.
More detailed numerical results, CPU times, and the number of primal and dual variables in the SDP relaxations (denoted $n$ and $m$ respectively) are reported in Table~\ref{T:TableShearFlowEx} for 
wave number parameters $\xi=3$ and $\xi=9$. For comparison, Table~\ref{T:TableShearFlowEx2} gives lower bounds on $\gamma_\mathrm{cr}$ computed with the inner SOS relaxation method of~\cite{Valmorbida2016} using polynomials of degree $d$, as well as the primal-dual dimensions of the corresponding SDPs returned by YALMIP's SOS module~\cite{Lofberg2009} and the CPU time required to solve them on our machine.

\begin{figure}[t]
\centering
\includegraphics[scale=0.86]{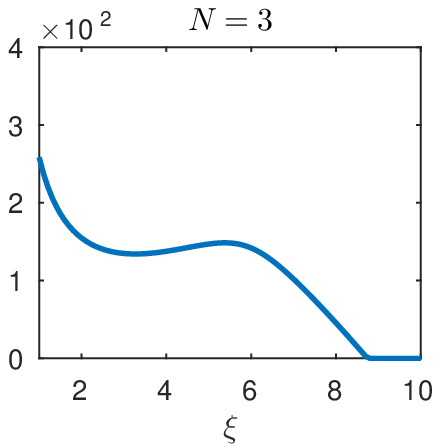} \hfill
\includegraphics[scale=0.86]{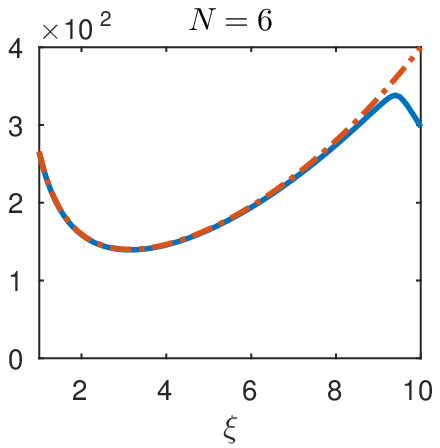} \hfill
\includegraphics[scale=0.86]{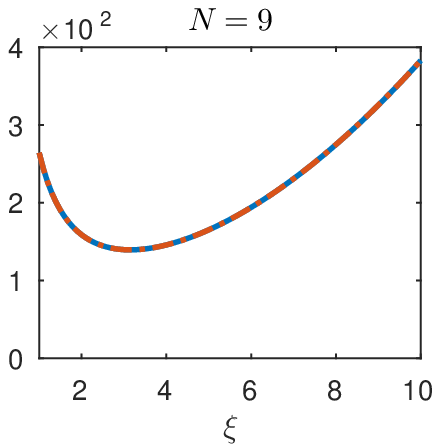} \hfill
\includegraphics[scale=0.86]{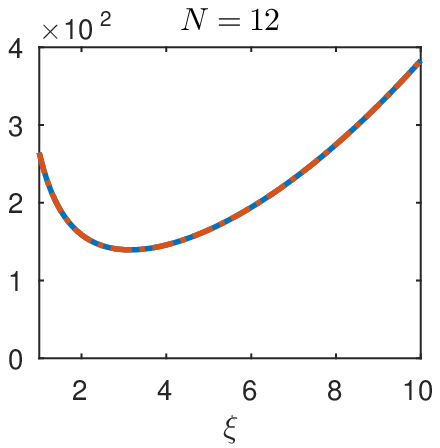}
\caption{(Color online) Upper (dot-dashed line) and lower (solid line) bounds on the optimal value of~\eqref{E:ProbEx2} as a function of $\xi$ for different values of $N$. The upper bound for $N=3$ is infinite and so it is not plotted. The bounds are indistinguishable for $N=9$ and $N=12$.}
\label{F:ShearFlowSol}
\end{figure}

\begin{table}[b]
\caption{Upper and lower bounds on $\gamma_\mathrm{cr}$ (denoted LB and UB), CPU times (in seconds), and primal-dual problem dimensions ($n$ and $m$) for the outer and inner relaxations of problem~\eqref{E:ProbEx2} from \textsc{QUINOPT}, as a function of the Legendre truncation parameter $N$.}
\centering
\small
\begin{tabular}{cc rrrrr cc rrrrr}
\hline\\[-0.75em]
&& \multicolumn{5}{c}{QUINOPT, outer} &&
& \multicolumn{5}{c}{QUINOPT, inner}\\
\cline{3-7} \cline{10-14}\\[-0.75em]
 && $N$ & $n$ & $m$ & UB & $t$ 
&&& $N$ & $n$ & $m$ & LB & $t$
\\[0.25em]
\multirow{4}{*}{$\xi=3$} 	   
	&& 3 & 0 & 1 & +INF &   0.03
	&&& 3 & 202 & 2 & 134.8594 & 0.09\\
	&& 6 & 36 & 1 & 140.4087 &   0.04 
	&&& 6 & 406 & 2 & 139.7656 &   0.10 \\
	&& 9 & 144 & 1 &  139.7701 &   0.06 
	&&& 9 & 683 & 2 & 139.7700 &   0.08 \\
	&& 12 & 324 & 1 & 139.7700 &   0.05 
	&&& 12 & 1030 & 2 & 139.7700 &   0.10 \\[0.5em]
\multirow{4}{*}{$\xi=9$} 	 
	&& 3 &   0 & 1 &   +INF &   0.03 
	&&& 3 & 202 & 2 & 0.0000 &   0.08\\
	&& 6 &  36 & 1 & 335.1022 &   0.04 
	&&& 6 & 406 & 2 & 323.5764 &   0.08 \\
	&& 9 & 144 & 1 & 325.6764 &   0.05
	&&& 9 & 683 & 2 & 325.6449 &   0.09  \\
	&& 12 & 324 & 1 & 325.6455 &   0.05 
	&&& 12 & 1030 & 2 & 325.6453 &   0.10\\[0.25em]
\hline
\end{tabular}
\label{T:TableShearFlowEx}
\end{table}

\begin{table}[t]
\caption{Lower bounds on $\gamma_\mathrm{cr}$ (denoted LB), CPU times (in seconds), and primal-dual problem dimensions ($n$ and $m$) for the inner relaxations of problem~\eqref{E:ProbEx2} obtained with the SOS method of~\cite{Valmorbida2016} using polynomials of degree $d$.}
\centering
\small
\begin{tabular}{rrrrr cc rrrrr}
\hline\\[-0.75em]
\multicolumn{5}{c}{$\xi=3$} &&&
\multicolumn{5}{c}{$\xi=9$}
\\
\cline{1-5} \cline{8-12}\\[-0.75em]
$d$ & $n$ & $m$ & LB & $t$ &&&
$d$ & $n$ & $m$ & LB & $t$
\\[0.25em]	   
   4 &  805 &   230 &  79.4435 & 0.19 &&
&  4 &   805 &   230 & 285.9021 & 0.18\\
   8 &  2349 &   454 & 119.8619 & 0.28 &&
&  8 &  2349 &   454 & 314.1146 & 0.27\\
  16 &  7789 &   902 & 130.5796 & 0.68 &&
& 16 &  7789 &   902 & 321.2403 & 0.65\\
  32 & 28077 &  1798 & 134.4737 & 3.16 &&
& 32 & 28077 &  1798 & 323.1421 & 2.98\\[0.25em]
\hline
\end{tabular}
\label{T:TableShearFlowEx2}
\end{table}

Our results show that within the tested range of $\xi$ the upper and lower bounds converge to each other at relatively small values of $N$ (three decimal places for $N=12$ for both cases reported in Table~\ref{T:TableShearFlowEx}). This means that we can bound $\gamma_\mathrm{cr}$ accurately and extremely efficiently using our techniques, and that the inner SDP relaxations converge to the full problem~\eqref{E:ProbEx2} despite our inability to provide a proof of this fact in general (cf. Remark~\ref{R:InnerFeasibilityRemark}). Finally, note that our techniques significantly outperform the SOS method of~\cite{Valmorbida2016} in terms of computational cost and quality of the lower bound.

\subsection{Stability of a system of coupled PDEs}
\label{S:StabilitySystemPDEs}

Let $\vec{w}=\left[u(t,x),\,v(t,x)\right]^T$  
and consider the system of PDEs
\begin{equation}
\label{E:PDEsystemEx}
\partial_t \vec{w} = \gamma\partial^2_x \vec{w}+ \vec{A}\vec{w}, \qquad 
\vec{A}=\begin{bmatrix}
1 & 1.5 \\ 5 & 0.2
\end{bmatrix},
\end{equation}
over the domain $[0,1]$, subject to the BCs $u\at{0}=u\at{1}=v\at{0}=v\at{1}=0$. This system was studied in~\cite[Sect. V-D]{Valmorbida2014a} with the equivalent parametrization $\gamma=R^{-1}$. The stabilizing effect of the diffusive term $\gamma\partial^2_x \vec{w}$ decreases with $\gamma$, until the equilibrium solution $[u,v]^T=[0,0]^T$ becomes unstable. It can be shown that the amplitude of infinitesimal sinusoidal perturbations to the zero solution grows exponentially in time if $\gamma < \gamma_\mathrm{cr}=0.3412$. Since the system is linear, it is stable with respect to finite-amplitude perturbations for all $\gamma \geq \gamma_\mathrm{cr}$. 

Following~\cite{Valmorbida2014a}, we try to establish the stability of the system with respect to arbitrary perturbations by considering Lyapunov functionals of the form
\begin{equation}
\label{E:PDELyapFun}
\mathcal{V}(t) = \frac{1}{2}\int_{0}^{1}\vec{w}^T \vec{P}(x) \vec{w}\dx,
\end{equation}
where $\vec{P}(x)$ is a tunable polynomial matrix of given degree $d_P$, such that $\mathcal{V}(t)\geq c \norm{\vec{w}}_2^2$ for some $c>0$ and $-\frac{{\rm d}\mathcal{V}}{{\rm d}t}\geq 0$. Note that since $\vec{P}(x)$ can always be rescaled by $c$ without changing the sign of the inequalities, we may fix $c=1$.  

Using~\eqref{E:PDEsystemEx} to compute $\frac{{\rm d}\mathcal{V}}{{\rm d}t}$, we find that the critical value of $\gamma$ at which~\eqref{E:PDELyapFun} stops being a valid Lyapunov function for a given degree $d_P$ is given by
\begin{equation}
\label{E:FeasProblemPDEEx}
\begin{gathered}
\min_{\gamma,\vec{P}(x)} \quad \gamma \\
\begin{aligned}
\text{s.t.} \quad
&\int_{0}^{1} \vec{w}^T \left[ \vec{P}(x) - \vec{I}\right] \vec{w} \,{\rm d}x \geq 0,
\\&
\int_{0}^{1} \vec{w}^T \vec{P}(x) (-\gamma\partial^2_x \vec{w}-\vec{A}\vec{w}) \dx \geq 0.
\end{aligned}
\end{gathered}
\end{equation}
Note that although the system state $\vec{w}$ is a function of time, the integral inequalities above are imposed {\it pointwise in time}. Therefore, the time dependence can be formally dropped, and~\eqref{E:FeasProblemPDEEx} is in the form~\eqref{E:OptProblem} with two integral inequalities.

Since the optimization variables are $\gamma$ and the coefficients of the entries of $\vec{P}(x)$, the problem is not jointly convex in $\gamma$ and $\vec{P}$, and we cannot minimize $\gamma$ directly. Instead, we fix a trial value for $\gamma$ and check whether a feasible $\vec{P}(x)$ of degree $d_P$ exists. The optimal $\gamma$ for~\eqref{E:FeasProblemPDEEx}, which must finite because the system is linearly unstable when $\gamma$ is sufficiently small, is then given by the value at which a feasible $\vec{P}(x)$ ceases to exist, and it can be determined with a simple bisection procedure. 

Before deriving our SDP relaxations, we need to rescale the domain of integration for the constraints in~\eqref{E:FeasProblemPDEEx} to $[-1,1]$. Moreover, in light of Remark~\ref{R:InnerFeasibilityRemark}, the second integral inequality should be integrated by parts to prevent the inner SDP relaxation from being infeasible. Both tasks (rescaling and integration by parts) are performed automatically by \textsc{QUINOPT}. We also note that after rescaling and integration by parts the second integral inequality in~\eqref{E:FeasProblemPDEEx} depends explicitly on the unspecified boundary values $\partial_x u\at{\pm1}$ and $\partial_x v\at{\pm1}$, making the extensions discussed in \S\ref{S:IneqBoundVal} necessary.

\begin{table}[b]
\caption{Upper and lower bounds (UB and LB) for the optimal solution of~\eqref{E:FeasProblemPDEEx} for different choices of $d_P$, and for the case $\vec{P}(x)=\vec{I}$. Also reported are the average CPU times $t_{UB}$ and $t_{LB}$ to solve each feasibility problem in the bisection procedure to compute the upper/lower bounds UB and LB (to minimize $\gamma$ in the case $\vec{P}(x)=\vec{I}$), and upper bounds from~\cite{Valmorbida2014a}.}
\centering
\small
\begin{tabular}{c c cc cc}
\hline\\[-0.75em]
$d_P$ 
& UB from~\cite{Valmorbida2014} 
& UB & $t_\mathrm{UB}$ 
& LB & $t_\mathrm{LB}$ \\[0.5em]
$\vec{P}(x)=\vec{I}$ & 5 & 0.3925 & 0.26 & 0.3925 & 0.09 \\
0 & 3.3333 & 0.3412 & 0.14 & 0.3412 & 0.12 \\ 
2 & 0.5882 & 0.3412 & 1.32 & 0.3412 & 0.99 \\
4 & 0.4347 & 0.3412 & 1.57 & 0.3412 & 1.07 \\
6 & 0.4166 & 0.3412 & 1.82 & 0.3412 & 1.18 \\[0.25em]
\hline
\end{tabular}
\label{T:TablePDESystemEx}
\end{table}

Table~\ref{T:TablePDESystemEx} shows upper and lower bounds for the optimal solution of~\eqref{E:FeasProblemPDEEx} as a function of the degree $d_P$ of $\vec{P}(x)$, obtained by applying the bisection procedure described above to the SDPs~\eqref{E:InnerSDP} and~\eqref{E:OuterSDP} respectively. We also show results for the particular choice $\vec{P}(x)=\vec{I}$, corresponding to the classical approach of taking the energy of the system as the candidate Lyapunov function; in this case, a direct minimization over $\gamma$ could be performed. In all computations we fixed the Legendre series truncation parameter to $N=10$ and the degree of the matrix $\vec{T}(x)$ in~\eqref{E:InnerSDP} to $6$, which gives well converged results. Table~\ref{T:TablePDESystemEx} also reports the average CPU time taken by \textsc{QUINOPT} to set up and solve each feasibility problem in our bisection procedure (to minimize $\gamma$ when we fixed $\vec{P}(x)=\vec{I}$). 

Our results show that stability can be established up to the known critical value $\gamma_\mathrm{cr}=0.3412$ for all choices of $d_P$, with the exception of the classical energy Lyapunov function. This drastically improves the conservative results obtained with the SOS method in~\cite{Valmorbida2014a} for the same problem, also reported in Table~\ref{T:TablePDESystemEx} (the original results are for a parameter $R=\gamma^{-1}$ and have been adapted). Our results demonstrate that our SDP relaxations accurately approximate~\eqref{E:FeasProblemPDEEx}; this is particularly significant for the inner SDPs, which rely on typically conservative estimates and for which we cannot prove convergence.

\subsection{Feasible set approximation}
\label{S:FeasSEtApproxEx}

In this final example, we consider the problem of computing the entire feasible set of the integral inequality 
\begin{equation}
\label{E:ToyProblem}
\int_{-1}^{1} \left[ \left(\partial u\right)^2 + \left(\partial v\right)^2 + \gamma_1 x^2 \partial u\,\partial v+ 2\gamma_2 u v \right] \dx \geq 0,
\end{equation}
where $u$ and $v$ are subject to the Dirichlet BCs
$u\at{-1}=0$, $u\at{1}=0$, $v\at{-1}=0$, $v\at{1}=0$.
This inequality does not arise from a particular PDE, but has been constructed ad-hoc to illustrate some subtle properties of our SDP relaxations and highlight the main sources of conservativeness.

Outer and inner approximation sets $T^\mathrm{out}_N$ and $T^\mathrm{in}_N$ can be found using~\eqref{E:OuterSDP} and~\eqref{E:InnerSDP}, respectively. In particular, we compute the boundaries of $T^\mathrm{out}_N$ and $T^\mathrm{in}_N$ by optimizing the objective function $\gamma_1 \sin\theta + \gamma_2\cos\theta$ for 300 equispaced values of $\theta\in [0,2\pi]$. When solving~\eqref{E:InnerSDP}, we fix the degree of the tunable polynomial matrix $\vec{T}(x)$ to the smallest of $N-2$ and $6$; our results do not improve when this is increased. Inner approximation sets $T^\mathrm{sos}_N$ can be computed in a similar way using the SOS method of~\cite{Valmorbida2016} with polynomials of degree $N$.

The CPU time required to compute $T^\mathrm{out}_N$, $T^\mathrm{in}_N$ , and $T^\mathrm{sos}_N$  is shown in Table~\ref{T:FeasibleSetsTable} for six values of $N$ and two SDP solvers, \textsc{MOSEK}~\cite{Andersen2009} and \textsc{SDPT3}~\cite{Tutuncu2003}; $N=2$ is the minimum value that satisfies~\eqref{E:Ncondition}. Evidently, the SOS method is much more computationally expensive than our methods for high-degree relaxations. Rather surprisingly, \textsc{MOSEK} computes $T^\mathrm{in}_N$ more efficiently than $T^\mathrm{out}_N$ at large $N$, despite the latter being nominally cheaper; this is not the case for \textsc{SDPT3}. 

On the other hand, Figure~\ref{F:FeasibleSets} shows that while $T^\mathrm{sos}_N$ seems to converge to $T^\mathrm{out}_N$ as $N$ increases, the inner approximation sets $T^\mathrm{in}_N$ computed with the method of \S\ref{S:InnerApprox} do not: our estimates in Lemmas~\ref{T:LemmaQ} and~\ref{T:LemmaR} and the SOS relaxation of the polynomial matrix inequality~\eqref{E:SuffCond_c} introduce conservativeness. 

\begin{table}[b]
\caption{CPU time (in seconds) for the computation of the sets $T^\mathrm{out}_N$, $T^\mathrm{in}_N$, and $T^\mathrm{sos}_N$ as a function of $N$ using MOSEK~\cite{Andersen2009} and SDPT3~\cite{Tutuncu2003}.}
\centering
\small
\begin{tabular}{c c ccc c ccc}
\hline\\[-0.75em]
&& \multicolumn{3}{c}{\textsc{MOSEK}}
&& \multicolumn{3}{c}{\textsc{SDPT3}}\\
\cline{3-5} \cline{7-9}\\[-0.75em] 
$N$
&& $T^\mathrm{out}_N$ & $T^\mathrm{in}_N$ & $T^\mathrm{sos}_N$
&& $T^\mathrm{out}_N$ & $T^\mathrm{in}_N$ & $T^\mathrm{sos}_N$\\[0.5em]
2 && 0.81 & 1.84 & 1.58 && 10.5 & 25.4 & 21.5\\
4 && 0.95 & 2.36 & 4.10 && 12.6 & 30.9 & 45.1\\
8 && 1.66 & 4.99 & 19.2 && 14.5 & 46.4 & 176\\
16 && 4.82 & 6.80 & 346 && 21.1 & 58.2 & 600\\
24 && 9.36 & 9.11 & 	2100  && 25.5 & 59.0 & 10500\\
32 && 17.7 & 14.6 & 6220 && 35.5 & 71.5 & 117000\\[0.25em]
\hline
\end{tabular}
\label{T:FeasibleSetsTable}
\end{table}

\begin{figure}[t]
\centering
\includegraphics[scale=1,trim = 0cm 0cm 0cm 0.1cm]{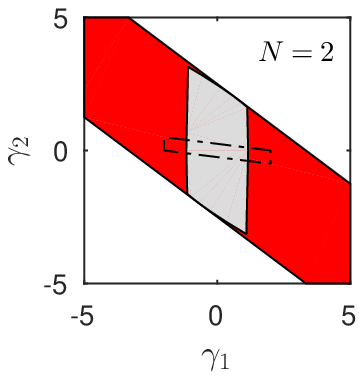} \hspace{15pt}
\includegraphics[scale=1,trim = 0cm 0cm 0cm 0.1cm]{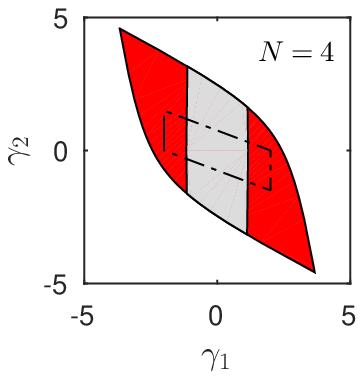} \hspace{15pt}
\includegraphics[scale=1,trim = 0cm 0cm 0cm 0cm]{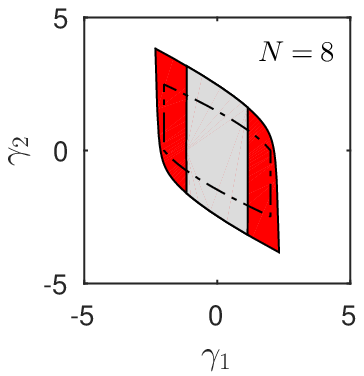} \\[0.25em]
\includegraphics[scale=1,trim = 0cm 0cm 0cm 0cm]{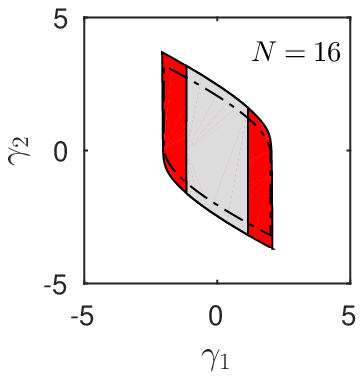} \hspace{15pt}
\includegraphics[scale=1,trim = 0cm 0.1cm 0cm 0cm]{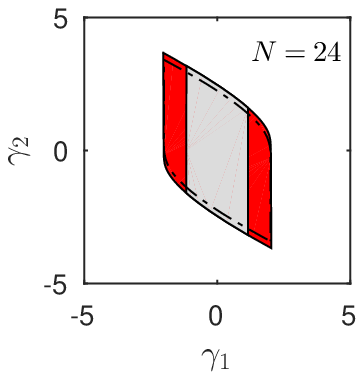}\hspace{15pt}
\includegraphics[scale=1,trim = 0cm 0.1cm 0cm 0cm]{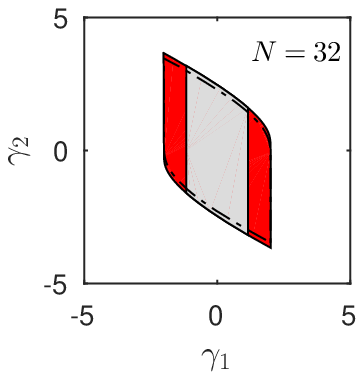}
\caption{(Color online) Inner and outer approximations of the feasible set of~\eqref{E:ToyProblem}: $T^\mathrm{in}_N$ (black solid boundary, gray interior), $T^\mathrm{out}_N$ (black solid boundary, red interior), and $T^\mathrm{sos}_N$ (dot-dashed black boundary, no shading).}
\label{F:FeasibleSets}
\end{figure}

Yet, there are parts where of the boundaries of $T^\mathrm{out}_N$ and $T^\mathrm{in}_N$ almost coincide even for $N$ as low as $4$, and---rather interestingly---these corresponds to those regions convergence of $T^\mathrm{sos}_N$ to $T^\mathrm{out}_N$ is the slowest. The figures suggest that the inner approximation sets $T^\mathrm{in}_N$ are only over-constrained in the $\gamma_1$ direction. This is because $\gamma_2$ appears only in the term $\int_{-1}^{1}2 \gamma_2 u v \dx$, to which we apply the estimates in Lemma~\ref{T:LemmaR} when computing the inner SDP relaxation. According to the decay rates stated in the Lemma, however, these estimates become negligible at large $N$. On the contrary, $\gamma_1$ appears in the term $\int_{-1}^{1}\gamma_1 x^2 \partial u\,\partial v \,{\rm d}x$, to which the estimates in part (ii) of Lemma~\ref{T:LemmaQ} must be applied. Despite our efforts to tune the auxiliary matrices in~\eqref{E:LemmaQEstimate}, the magnitude of such estimates does not decay compared to other terms, limiting the range of feasible values of $\gamma_1$ in practice.  
This issue should be addressed in future work, and should be taken into account when trying to formulate rigorous statements on the feasibility and convergence of our inner SDP relaxations.

\section{Scalability}
\label{S:scalability}

It may be checked that when $\vec{w}(x)\in\mathbb{R}^q$ is subject to $p$ independent boundary conditions, the degree of the polynomials in the matrix $\vec{F}(x;\vec{\gamma})$ is at most $d_F$, and $\vec{\gamma}\in\mathbb{R}^s$, the outer SDP relaxation for a quadratic inequality of the form~\eqref{E:FwithBCdep} with $N$ Legendre coefficients results in an SDP with an LMI of dimension $q(N+1)-p$ with $s$ variables. Instead, the inner SDP relaxation has:
an LMI of dimension $2\vert\vec{l}\vert + q(N+\vert\vec{k}\vert_\infty+ d_F+2)-p$, where $\vert\vec{k}\vert_\infty := \max_{i\in\{1,\,\ldots,\,q\}}k_i$;
 a $q\times q$ matrix SOS constraint of degree $\deg \vec{S}(x;\vec{\gamma})$, where $\vec{S}(x;\vec{\gamma})$ is defined as in \S\ref{SS:LowerBoundF};
at most $q(q+1)/2$ auxiliary LMIs of size $4\,d_F$ from Lemma~\ref{T:LemmaQ};
at most $(d_F+1)(2\,q+\vert\vec{k}\vert+1)\vert\vec{k}\vert$ linear inequalities to lift the absolute values introduced by Lemma~\ref{T:LemmaR};
at most $s+q(q+1)(2\,d_F^2+d_F+2)/2 + (d_F+1)(2\,q+\vert\vec{k}\vert+1)\vert\vec{k}\vert/2$ variables.
Since currently only small to medium-size SDPs can be solved in practice, one might therefore expect that although our techniques are cheaper than the SOS method of~\cite{Valmorbida2016}---as highlighted by our numerical examples---they can only be implemented when $q$, $d_F$, $s$ and $\vert\vec{k}\vert_\infty$ are sufficiently small. 

The development of solvers for large scale SDPs is an active research area, and new tools are being developed that should facilitate solving problems at larger scales; see, for example, the solvers SCS~\cite{ODonoghue2016} and CDCS~\cite{Zheng2016a,Zheng2016b}.

Moreover, the poor scalability of SDPs may not be too severe an issue for many problems of practical interest.
In fact, the number of constraints in the inner SDP relaxation can be considerably smaller than the worst-case count presented above. To see this, note that Lemma~\ref{T:LemmaQ} introduces auxiliary LMIs and variables only for the (upper-triangular, by symmetry) entries of $\vec{S}(x;\vec{\gamma})$ that depend on $x$; for example, only one auxiliary LMI is needed for inequality~\eqref{E:ToyProblem}. In addition, the size of the auxiliary LMI associated with the entry $\vec{S}_{ij}$ can be reduced to $4\times\deg \vec{S}_{ij}$, yielding considerable savings if $\deg \vec{S}(x;\vec{\gamma})\ll d_F$. In the extreme case $\deg \vec{S}(x;\vec{\gamma})=0$, i.e. the matrix $\vec{S}(x;\vec{\gamma})$ is independent of $x$, there are no auxiliary variables and LMIs from Lemma~\ref{T:LemmaQ}, and moreover the $q\times q$ matrix SOS constraint becomes a $q\times q$ LMI. 
This situation is common when energy-Lyapunov-function methods are applied to turbulent fluid flows~\cite{Constantin1995a,Doering1994,Doering1996}, so our techniques are particularly suited to tackle problems in this field---as proven by the results of \S\ref{S:ShearFlowEx} and of~\cite{Fantuzzi2016}.

Finally, we also note that a moderate Legendre truncation parameter $N$, and hence a medium-size SDP relaxation, often suffices to obtain accurate bounds on the objective function, as suggested by all our examples. Roughly speaking, to obtain a good bound on the optimal value one should choose $N$ such that the minimizer $\vec{w}^\star$ of $\mathcal{F}_{\vec{\gamma}}\{\vec{w}\}$ at the optimal point $\vec{\gamma} = \vec{\gamma}^\star$ is approximated sufficiently well by a polynomial of degree $N$ (here we assume that the minimizer $\vec{w}^\star$ exists for simplicity). Since $\vec{w}^\star$ is typically a ``well-behaved'' function 
(the highest-order derivatives of highly oscillatory test functions $\vec{w}$ would give large contribution to $\mathcal{F}_{\vec{\gamma}^\star}\{\vec{w}\}$, making highly-oscillatory minimizers unlikely), this can be done with moderate $N$.

\section{Conclusion}
\label{S:Conclusion}

In this work, we have developed a new method to optimize a linear cost function subject to homogeneous quadratic integral inequality constraints. More precisely, we have employed Legendre series expansions and functional estimates to derive inner and outer approximations of the feasible set of an integral inequality, and have shown that upper and lower bounds for the optimal cost value can be computed efficiently using semidefinite programming. We have proven that the lower bounds obtained with our outer approximations form a non-decreasing sequence that converges to the exact optimal cost value (if this is attained). Unfortunately, similar statements do not generally extend to our inner approximations.

Although the steps leading to our SDP relaxations are rather technical, they are amenable to numerical implementation. To aid the formulation and solution of optimization problem with integral inequality constraints in practice, we have developed the \textsc{MATLAB} package \textsc{QUINOPT}, an open-source add-on for the optimization toolbox \textsc{YALMIP}. Using this software, we have successfully solved non-trivial problems that arise when studying the stability of autonomous systems of PDEs.

We have demonstrated that our methods work well in practice, even though they rely on typically conservative estimates to formulate numerically tractable constraints. It is in the interest of future work to formalize these observations, and determine conditions that ensure the feasibility and/or convergence of our inner SDP relaxations. The results presented in \S\ref{S:FeasSEtApproxEx} suggest that more stringent assumption on the properties of the integral inequality might be needed.

Looking at the applications we have in mind, i.e., the analysis of systems governed  by PDEs, the present work should be extended to {\it(i)} integral inequalities with explicit time dependence that arise from non-autonomous PDEs, and {\it(ii)}  inequalities over two or higher dimensional domains. Polynomial explicit time dependence could be dealt with by relaxing our inner/outer LMI constraints, now time-dependent, into matrix SOS conditions, although the (current) poor scalability of SOS optimization makes this strategy unlikely implementable. Multi-dimensional compact box domains could be analyzed by introducing Legendre expansions in each coordinate direction and adapting the ideas presented in this work, while for more general domains---including the non-compact case---other basis functions could be used. 
This may present hurdles in the derivation of inner approximations, because they require estimates that rely on specific properties of the basis functions. 
Unless sparsity and/or problem structure are exploited, multi-dimensional inequalities are also likely to be constrained by the current computational limitations: with $n$ spatial dimensions and $q$ dependent variables ($\vec{w}\in\mathbb{R}^q$), the LMI size for a simple outer approximations using polynomials of degree $N$ will be approximately $q N^n$.

Finally, it is in the interest of future work to extend our methods to integral inequalities more general than the homogeneous quadratic type. We expect that our methods can be extended with little effort to complete (i.e., inhomogeneous) quadratic integral inequalities over spaces described by homogeneous BCs (inhomogeneous BCs can be ``lifted'' by a polynomial shift). In fact, the linear part of a complete quadratic functional can be analyzed with ideas similar to those used in \S\ref{S:IneqBoundVal}. Extensions to higher-than-quadratic functionals, e.g. by introducing additional slack variables to reduce them to quadratic ones, are also essential if recently developed analysis techniques based on dissipation inequalities~\cite{Ahmadi2016} are to be successfully applied to complex nonlinear systems of PDEs of interest in physics and engineering.

\appendix
\section{Legendre polynomials and Legendre series}
\label{A:LegPolys}

The Legendre polynomial of degree $n$ is defined over the interval $[-1,1]$ as
\begin{equation*}
\mathcal{L}_n(x) = \frac{1}{n!\,2^n} \frac{{\rm d}^n}{\dx^n}(x^2-1)^n.
\end{equation*}
The Legendre polynomials of degree $n\geq 2$ can also be constructed with the recurrence relation
\begin{equation}
\label{E:LegpPolyRecurrence}
{n}\mathcal{L}_{n}(x) = \left({2n-1}\right)x\,\mathcal{L}_{n-1}(x) 
- {(n-1)}\mathcal{L}_{n-2}(x),
\end{equation}
with $\mathcal{L}_0(x) = 1$ and $\mathcal{L}_1(x) = x$. Equation~\eqref{E:LegpPolyRecurrence} can be used to show that $\mathcal{L}_n(\pm1) = (\pm1)^n$.

The Legendre polynomials satisfy a number of other recurrence relations. In this work, we will use the fact that
\begin{equation}
\label{E:LegRecursionDerivatives}
(2n+1)\mathcal{L}_n(x) = \frac{{\rm d}}{ \dx }\left[ \mathcal{L}_{n+1}(x) - 
\mathcal{L}_{n-1}(x) \right], \quad n\geq 1,
\end{equation}
see e.g. \cite[Chapter 7, Problem 7.8]{Agarwal2009}. Moreover,
$\norm{\mathcal{L}_n}_{\infty} \leq 1$ for all $n\geq 0$.

The Legendre polynomials also form a complete orthogonal basis for the Lebesgue space $L^2(-1,1)$ \cite{Zeidler1995}, and satisfy the orthogonality condition
\begin{equation}
\label{E:LegOrthonormality}
\int_{-1}^{1} \mathcal{L}_n\, \mathcal{L}_m\,{\rm d}x = \frac{2\delta_{mn}}{2n+1},
\end{equation}
where $\delta_{mn}$ is the usual Kronecker delta.
This means that any square-integrable function $u$ can be expanded with a convergent series (in the $L^2$ norm sense)
\begin{equation}
\label{E:LegCoeffsIntegral}
u(x) = \sum_{n=0}^{\infty} \hat{u}_n \mathcal{L}_n(x),
\quad 
\hat{u}_n  = \int_{-1}^{1} u \,\mathcal{L}_n \,{\rm d}x,
\end{equation}
where the $\hat{u}_n$'s are known as Legendre coefficients. From~\eqref{E:LegOrthonormality} it follows that
\begin{equation}
\label{E:LegExpNorm}
\norm{u}_2^2 = \int_{-1}^{1} \vert u \vert^2 \,{\rm d}x = \sum_{n=0}^{\infty}\frac{2\vert
\hat{u}_n\vert^2}{2n+1}.
\end{equation}

Finally, if $u$ is continuously differentiable on $[-1,1]$ its Legendre series expansion converges uniformly. In fact, $u$ is Lipschitz on $[-1,1]$ because, by Taylor's theorem, for any $x,y\in [-1,1]$ there exists a point $z$ between $x$ and $y$ such that
$\vert u(y) - u(x) \vert  = \vert \partial u(z)\vert \,\vert x-y\vert
\leq C \,\vert x-y\vert$.
Here, $C$ is a generic positive constant whose existence is guaranteed by the continuity of $\partial u$ in $[-1,1]$. Uniform convergence follows from~\cite[Theorem XI and subsequent comments]{Jackson1930}.

\section{Proofs}

\subsection{Proof of Theorem~\ref{T:ConvergenceOuterSDPrelax}}
\label{A:ProofConvergenceOuterAprox}

Define the norm $\norm{\vec{w}}_k^2:=\int_{-1}^{1} (\mathcal{D}^k\vec{w})^T \mathcal{D}^k\vec{w} \dx$, consider the functional 
\begin{equation*}
\mathcal{H}_{\vec{\gamma}}\{\vec{w}\} := \frac{\mathcal{F}_{\vec{\gamma}}\{\vec{w}\}}{\norm{\vec{w}}_k^2},
\end{equation*}
and let
\begin{align*}
t(\vec{\gamma}) &\defeq 
\inf_{\vec{w}\in H\setminus\{\vec{0}\} } 
\mathcal{H}_{\vec{\gamma}}\{\vec{w}\},
\\
t_N(\vec{\gamma}) &\defeq 
\inf_{\vec{w}\in S_N\setminus\{\vec{0}\} }  
\mathcal{H}_{\vec{\gamma}}\{\vec{w}\}.
\end{align*}
(We need not assume that these infima are achieved.) It is not too difficult to show that the sets $T$ and $T^\mathrm{out}_N$ are described by the inequalities $t(\vec{\gamma})\geq 0$ and $t_N(\vec{\gamma})\geq 0$, respectively.  To prove Theorem~\ref{T:ConvergenceOuterSDPrelax} we rely on the following result.

\begin{lemma}
\label{T:LemmaMinSeqPoly}
Suppose $\vec{\gamma}\notin T$, i.e., there exists $\varepsilon_{\vec{\gamma}}>0$ such that $t(\vec{\gamma})\leq -2\varepsilon_{\vec{\gamma}}$. Then, there exists an integer $N_{\vec{\gamma}}$ such that $t_N(\vec{\gamma})\leq -\varepsilon_{\vec{\gamma}}$ for all $N\geq N_{\vec{\gamma}}$.
\end{lemma}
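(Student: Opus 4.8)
The idea is to exhibit, for every large $N$, an element of $S_N$ on which the quotient functional $\mathcal{H}_{\vec\gamma}$ drops below $-\varepsilon_{\vec\gamma}$. Since $t(\vec\gamma)=\inf_{\vec w\in H\setminus\{\vec 0\}}\mathcal{H}_{\vec\gamma}\{\vec w\}\le -2\varepsilon_{\vec\gamma}$, I would first pick a fixed $\vec{w}^*\in H\setminus\{\vec 0\}$ with $\mathcal{H}_{\vec\gamma}\{\vec{w}^*\}<-\tfrac32\varepsilon_{\vec\gamma}$. Note $\norm{\vec{w}^*}_k>0$: since $\mathcal{D}^k\vec{w}^*$ contains the components $u^*,v^*$ of $\vec{w}^*$ themselves, $\norm{\vec{w}^*}_k=0$ would force $\vec{w}^*=\vec 0$. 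The plan is then to construct polynomials $\vec{w}_N\in S_N$ with $\vec{w}_N\to\vec{w}^*$ in $C^k$ (in fact in $C^l$). Because the entries of $\vec{F}(x;\vec\gamma)$ are bounded on $[-1,1]$, uniform convergence of $\mathcal{D}^k\vec{w}_N$ gives $\mathcal{F}_{\vec\gamma}\{\vec{w}_N\}\to\mathcal{F}_{\vec\gamma}\{\vec{w}^*\}$ and $\norm{\vec{w}_N}_k^2\to\norm{\vec{w}^*}_k^2>0$, hence $\mathcal{H}_{\vec\gamma}\{\vec{w}_N\}\to\mathcal{H}_{\vec\gamma}\{\vec{w}^*\}<-\tfrac32\varepsilon_{\vec\gamma}$.

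\textbf{The construction of $\vec{w}_N$ is the crux}, because the approximants must lie in $S_N=H\cap(\mathcal{P}_N\times\mathcal{P}_N)$, i.e.\ they must \emph{exactly} satisfy the boundary conditions $\vec{A}\mathcal{B}^{\vec{l}}\vec{w}=\vec 0$. I would proceed in two steps. \emph{Step 1 (free approximation).} Since $\vec{w}^*\in C^m$ with $m\ge l$ by~\eqref{E:klCond2}, approximate, for each of the two components, its $l$-th derivative uniformly on $[-1,1]$ by a polynomial (Weierstrass), and take the $l$-fold antiderivative whose derivatives of orders $0,\dots,l-1$ at $x=-1$ match those of the component. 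This yields $\vec{p}_N\in\mathcal{P}_N\times\mathcal{P}_N$ converging to $\vec{w}^*$ uniformly together with all derivatives up to order $l$; in particular $\mathcal{B}^{\vec{l}}\vec{p}_N\to\mathcal{B}^{\vec{l}}\vec{w}^*$. \emph{Step 2 (correction onto the boundary conditions).} The linear map sending a vector polynomial of componentwise degree $\le 2l+1$ to its boundary data $\mathcal{B}^{\vec{l}}(\cdot)\in\mathbb{R}^{2(q+\vert\vec{l}\vert)}$ is a bijection: componentwise this is the classical two-point Hermite interpolation problem (matching $l+1$ values at each endpoint by a degree-$(2l+1)$ polynomial) and the dimensions agree. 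Let $\vec{c}_N$ be the unique such polynomial with $\mathcal{B}^{\vec{l}}\vec{c}_N=\mathcal{B}^{\vec{l}}\vec{w}^*-\mathcal{B}^{\vec{l}}\vec{p}_N$, and set $\vec{w}_N\defeq\vec{p}_N+\vec{c}_N$. Then $\mathcal{B}^{\vec{l}}\vec{w}_N=\mathcal{B}^{\vec{l}}\vec{w}^*$, so $\vec{A}\mathcal{B}^{\vec{l}}\vec{w}_N=\vec 0$ and $\vec{w}_N\in H$; and $\vec{w}_N\in\mathcal{P}_N\times\mathcal{P}_N$ once $N\ge 2l+1$, so $\vec{w}_N\in S_N$. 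Moreover the (finite-dimensional, hence bounded) inverse of the Hermite map controls $\vec{c}_N$ and all its derivatives by $\norm{\mathcal{B}^{\vec{l}}\vec{w}^*-\mathcal{B}^{\vec{l}}\vec{p}_N}$, which tends to $0$ by Step 1; thus $\vec{c}_N\to\vec 0$ in $C^l$ and so $\vec{w}_N\to\vec{w}^*$ in $C^l$, a fortiori in $C^k$ since $k\le l$.

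\textbf{Conclusion.} Finally I would combine the two limits: $\mathcal{H}_{\vec\gamma}\{\vec{w}_N\}\to\mathcal{H}_{\vec\gamma}\{\vec{w}^*\}<-\tfrac32\varepsilon_{\vec\gamma}$, so there is an index $N_{\vec\gamma}\ge 2l+1$ with $\mathcal{H}_{\vec\gamma}\{\vec{w}_{N_{\vec\gamma}}\}<-\varepsilon_{\vec\gamma}$; and since $\vec{w}_{N_{\vec\gamma}}\in S_{N_{\vec\gamma}}\subseteq S_N$ for every $N\ge N_{\vec\gamma}$ (because $\mathcal{P}_{N_{\vec\gamma}}\subseteq\mathcal{P}_N$), it follows that $t_N(\vec\gamma)=\inf_{\vec w\in S_N\setminus\{\vec 0\}}\mathcal{H}_{\vec\gamma}\{\vec w\}\le\mathcal{H}_{\vec\gamma}\{\vec{w}_{N_{\vec\gamma}}\}<-\varepsilon_{\vec\gamma}$ for all such $N$, which is the claim. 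The one genuinely delicate point is Step 2: producing approximants that can be nudged \emph{exactly} onto the boundary-condition manifold by a correction whose size is controlled uniformly in $N$. The fact that the correction can be taken of a fixed finite degree $2l+1$ independent of $N$, together with the bijectivity (hence bounded invertibility) of the Hermite boundary map, is precisely what makes the argument go through; everything else is a routine continuity/monotonicity bookkeeping.
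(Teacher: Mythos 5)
Your argument is correct, and it follows the same overall strategy as the paper --- approximate a near-optimal test function by polynomials that lie \emph{exactly} in $H$, then use continuity of $\mathcal{F}_{\vec{\gamma}}$ and of $\norm{\cdot}_k^2$ under $C^k$ convergence --- but it differs in two worthwhile ways. First, where the paper outsources the delicate step (polynomial approximation in $C^k$ \emph{respecting the prescribed boundary conditions}) to a cited extension of the Weierstrass theorem to subspaces with boundary constraints (Proposition 2 of Peet's work), you prove it from scratch: Weierstrass on the $l$-th derivative followed by $l$-fold antidifferentiation, then an exact correction by a two-point Hermite interpolant of fixed degree $2l+1$, whose size is controlled uniformly in $N$ by the bounded invertibility of the finite-dimensional boundary-data map. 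This makes your proof self-contained, and your identification of Step 2 as the crux is exactly right --- it is the content of the cited result. Second, the paper works with a full minimizing sequence $\{\vec{w}_n\}$ together with quantitative tolerances $\mu_n$, $\delta_n$, and thereby obtains the stronger conclusion $t_N(\vec{\gamma})\downarrow t(\vec{\gamma})$; you instead fix a single near-minimizer $\vec{w}^*$ with $\mathcal{H}_{\vec{\gamma}}\{\vec{w}^*\}<-\tfrac32\varepsilon_{\vec{\gamma}}$ and exploit the nestedness $S_{N_{\vec{\gamma}}}\subseteq S_N$ so that one witness suffices for all $N\geq N_{\vec{\gamma}}$. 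That is leaner and fully adequate for the lemma as stated (the convergence $t_N\to t$ is not needed elsewhere), at the cost of not recovering the monotone-convergence information. Minor bookkeeping you implicitly rely on and could state explicitly: $\norm{\vec{w}_N}_k^2\to\norm{\vec{w}^*}_k^2>0$ guarantees $\vec{w}_N\neq\vec{0}$ for $N$ large, so $\vec{w}_{N_{\vec{\gamma}}}$ is an admissible competitor in the infimum defining $t_N(\vec{\gamma})$.
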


\begin{proof}
Let $\{\vec{w}_n\}_{n\geq 0}$, $\vec{w}_n\in H$, $\vec{w}_n \neq \vec{0}$ be a minimizing sequence, i.e. such that
$
\lim_{n\to\infty} 
\mathcal{H}_{\vec{\gamma}}\{\vec{w}_n\} = t(\vec{\gamma}),
$
and for each $n$ define $\mu_n := \norm{\vec{w}_n}_k^2/(n+1)$.
Note that $F\left(x,\mathcal{D}^{\vec{k}}\vec{w}(x)\right)$---the integrand of $\mathcal{F}_{\vec{\gamma}}\{\vec{w}\}$---and the product $(\mathcal{D}^{\vec{k}}\vec{w}(x))^T\mathcal{D}^{\vec{k}}\vec{w}(x)$
are continuous with respect to all entries of the vector $\mathcal{D}^{\vec{k}}\vec{w}(x)$ at each fixed $x\in[-1,1]$.
Using
\begin{equation*}
\left\vert 
\mathcal{F}_{\vec{\gamma}}
\{\vec{w}\} -
\mathcal{F}_{\vec{\gamma}}
\{\vec{w}_n\}
\right\vert 
\leq 
2 \norm{F\left(x,\mathcal{D}^{\vec{k}}\vec{w}\right) - F\left(x,\mathcal{D}^{\vec{k}}\vec{w}_n\right)}_\infty
\end{equation*} 
and a similar inequality for 
$\left\vert \norm{\vec{w}}_k^2 - \norm{\vec{w}_n}_k^2 \right\vert$
it is then not difficult to show that there exists $\delta_n>0$ such that
\begin{equation}
\label{E:DeltaCond}
\max_{0\leq\alpha\leq k} \norm{ \partial^\alpha \vec{w} - \partial^\alpha \vec{w}_n}_{\infty} \leq \delta_n
\end{equation}
implies
\begin{subequations}
\begin{align}
\label{E:Conditions1}
\left\vert \mathcal{F}_{\vec{\gamma}}\{\vec{w}\} -
\mathcal{F}_{\vec{\gamma}}\{\vec{w}_n\}\right\vert &\leq \mu_n,
\\
\label{E:Conditions2}
\left\vert \norm{\vec{w}}_k^2 - \norm{\vec{w}_n}_k^2 \right\vert &\leq \mu_n.
\end{align}
\end{subequations}

Since the Weierstrass approximation theorem can be extended to linear subspaces of continuously differentiable functions with prescribed boundary conditions (this follows e.g from~\cite[Proposition 2]{Peet2007}), there exists a polynomial $\vec{P}_{n}\in H$ of degree $d_n$, $\vec{P}_n \neq \vec{0},$ that satisfies~\eqref{E:DeltaCond}. Without loss of generality, we may assume that $d_n<d_{n+1}$. 
From~\eqref{E:Conditions1}--\eqref{E:Conditions2} we see that $\norm{\vec{P}_n}_k^2 \geq \norm{\vec{w}_n}_k^2 - \mu_n = \norm{\vec{w}_n}_k^2\times n/(n+1)$, and since $\vec{P}_{n}\in S_N$ for all $N\in\{d_n,\,\hdots,\,d_{n+1}-1\}$  we can write
\begin{equation*}
\begin{aligned}
t(\vec{\gamma}) \leq t_N(\vec{\gamma}) 
&\leq  \mathcal{H}_{\vec{\gamma}}\{\vec{P}_{n}\}
\\&
\leq 
\frac{\vert \mathcal{F}_{\vec{\gamma}}\{\vec{P}_{n}\} - \mathcal{F}_{\vec{\gamma}}\{\vec{w}_n\}\vert}
{\min\{\norm{\vec{P}_n}_k^2,\,\norm{\vec{w}_n}_k^2\}}
 + \mathcal{H}_{\vec{\gamma}}\{\vec{w}_n\}
\\&
\leq \frac{1}{n} + \mathcal{H}_{\vec{\gamma}}\{\vec{w}_n\}.
\end{aligned}
\end{equation*}

The last expression tends to $t(\vec{\gamma})$ as $n$ (hence, $N$) tends to infinity, so $t_N(\vec{\gamma}) - t(\vec{\gamma}) \downarrow 0$ as $N\to\infty$. In particular, there exists an integer $N_{\vec{\gamma}}$ such that $t_N(\vec{\gamma}) - t(\vec{\gamma})\leq \varepsilon_{\vec{\gamma}}$ for all $N\geq N_{\vec{\gamma}}$. Upon rearranging and recalling that $t(\vec{\gamma})\leq -2\varepsilon_{\vec{\gamma}}$ we conclude that, for all $N\geq N_{\vec{\gamma}}$,
\begin{equation*}
t_N(\vec{\gamma}) \leq \varepsilon_{\vec{\gamma}} + t(\vec{\gamma}) \leq \varepsilon_{\vec{\gamma}}-2\varepsilon_{\vec{\gamma}}  = - \varepsilon_{\vec{\gamma}}< 0.
\qedhere
\end{equation*}
\end{proof}

Let us now prove Theorem~\ref{T:ConvergenceOuterSDPrelax}. The sequence of optimal values $\{p^\star_N\}_{N\geq 0}$ is non-decreasing since $T^\mathrm{out}_{N+1} \subset T^\mathrm{out}_N$.
To prove convergence when~\eqref{E:OptProblem} achieves its optimal value, let us assume that its feasible set $T$ is {\it bounded}; if not, one can formulate an equivalent problem (meaning that $\vec{\gamma}^\star$ is still an optimal solution) with bounded feasible set by adding the constraint $\norm{\vec{\gamma}}\leq r$ for a sufficiently large $r>0$. For any $\varepsilon> 0$ (different from that used in Lemma~\ref{T:LemmaMinSeqPoly}), the set
\begin{equation*}
K := \{\vec{\gamma} : \, \varepsilon \leq {\rm dist} (\vec{\gamma},T) \leq 2\,\varepsilon \},
\end{equation*}
 where 
$ {\rm dist} (\vec{\gamma},T) 
= \min_{ \vec{\eta} \in T} 
\norm{ \vec{\eta} - \vec{\gamma} }$ 
is the usual euclidean distance of $\vec{\gamma}$ from $T$, is compact,  and only contains  points that are infeasible for~\eqref{E:OptProblem}. 
By Lemma~\ref{T:LemmaMinSeqPoly}, for each $\vec{\gamma}\in K$ there exists an integer $N_{\vec{\gamma}}$ such that $t_{N}(\vec{\gamma})<0$, i.e. $\vec{\gamma}$ is infeasible for~\eqref{E:OuterSDP}, for all $N\geq N_{\vec{\gamma}}$. 
The compactness of $K$ and the continuity of $t_N(\vec{\gamma})$---the proof of this fact is not difficult and is left to the reader---imply the existence of an integer $N_0=N_0(\varepsilon)$ and a finite number of balls $B(\vec{\gamma}_i,\delta_i)$ with center $\vec{\gamma}_i$ and radius $\delta_i$ which cover $K$ such that $t_{N}(\vec{\gamma})<0$ in each ball for all $N\geq N_0$. Consequently, all points in $K$ are infeasible for the outer SDP relaxation~\eqref{E:OuterSDP} when $N\geq N_0$. Since the feasible set $T^\mathrm{out}_N$ of the outer SDP relaxation must be convex, we conclude that it must be contained within an $\varepsilon$-neighbourhood of $T$ for all $N\geq N_0$, {\it i.e.},
\begin{equation*}
\forall N\geq N_0, \quad \max_{ \vec{\gamma}\in T^\mathrm{out}_N } {\rm dist} (\vec{\gamma},T) < \varepsilon.
\end{equation*}
In particular, $T^\mathrm{out}_N$ is bounded, and there exists a point $\vec{\gamma}^\star_N$ with $\vec{c}^T\vec{\gamma}_N^\star = p^\star_N$ whose projection onto $T$, denoted $\mathcal{P}_T(\vec{\gamma}_N^\star)$, satisfies $\norm{\mathcal{P}_T(\vec{\gamma}_N^\star) - \vec{\gamma}_N^\star} < \varepsilon$. Then, for all $N\geq N_0$
\begin{align*}
p^\star  -   \norm{\vec{c}} \varepsilon
&\leq \vec{c}^T \mathcal{P}_T(\vec{\gamma}_N^\star)  
-  \norm{\vec{c}} \varepsilon
\\
&= \vec{c}^T \left[ \mathcal{P}_T(\vec{\gamma}_N^\star) - \vec{\gamma}_N^\star \right] + p^\star_N -  \norm{\vec{c}} \varepsilon
\\
&\leq \norm{\vec{c}}\,\norm{ \mathcal{P}_T(\vec{\gamma}_N^\star) - \vec{\gamma}_N^\star} + p^\star_N -  \norm{\vec{c}} \varepsilon
\\
&< p^\star_N.
\end{align*} 
Since $p_N^\star\leq p^\star$ we conclude that $p^\star  -   \norm{\vec{c}} \varepsilon \leq \lim_{N\to\infty} p_N^\star \leq p^\star$ for any $\varepsilon$, and the proof is concluded by letting $\varepsilon\to 0$.

\subsection{Proof of Lemma~\ref{T:IntegrationLemma}}
\label{A:ProofIntegrationLemma}

The statement is trivial when $\alpha=k$. Moreover, since $u\in C^{m}([-1,1])$ and $k\leq m-1$, the 
Legendre expansions of all derivatives $\der{\alpha}u$, $\alpha\in\{0,...,k\}$ 
converge uniformly, cf. Appendix~\ref{A:LegPolys}. Consequently, we can use the fundamental 
theorem of calculus for each $\alpha\leq k-1$ to write
\begin{equation}
\label{E:Theorem1DiffFormulaProof}
\left(\der{\alpha} u\right)(x) 
= \der{\alpha}u\at{-1} + \int_{-1}^{x} 
\partial^{\alpha+1} u(t) \,{\rm d}t
=
\partial^\alpha u\at{-1} +
\sum_{n\geq 0} \hat{u}^{\alpha+1}_n \, \int_{-1}^{x}  \mathcal{L}_n(t) \,{\rm d}t.
\end{equation} 
The last expression can be integrated recalling that $\mathcal{L}_0(x)=1$, 
$\mathcal{L}_1(x)=x$, $\mathcal{L}_n(\pm1) = (\pm 1)^n$ and using the recurrence 
relation~\eqref{E:LegRecursionDerivatives}. We can then rewrite~\eqref{E:Theorem1DiffFormulaProof} as
\begin{equation*}
\partial^\alpha u
=\partial^\alpha u\at{-1} +  \left[\mathcal{L}_1\!+\!\mathcal{L}_0 \right] 
\hat{u}^{\alpha+1}_0 
+ \sum_{n\geq 1} \left[\mathcal{L}_{n+1} - \mathcal{L}_{n-1} \right] 
\frac{\hat{u}^{\alpha+1}_n}{2n+1}.
\end{equation*}
Rearranging the series and comparing coefficients with the Legendre expansion of 
$\der{\alpha}u$ gives the relations
\begin{subequations}
\label{E:DifferentiationRule}
\begin{align}
\label{E:DifferentiationRule_0}
\hat{u}^{\alpha}_0 &= \partial^\alpha u\at{-1} + \hat{u}^{\alpha+1}_0 - 
\frac{1}{3}\hat{u}^{\alpha+1}_1,
\\
\label{E:DifferentiationRule_others}
\hat{u}^{\alpha}_n &= \frac{\hat{u}^{\alpha+1}_{n-1}}{2n-1} - 
\frac{\hat{u}^{\alpha+1}_{n+1}}{2n+3}, \qquad\qquad\qquad n\geq 1.
\end{align}
\end{subequations}

We can then find matrices $\vec{C}^{\alpha}$ and $\vec{E}^{\alpha}$ such that
\begin{equation}
\label{E:FirstIter}
\begin{aligned}
\vec{\hat{u}}^\alpha_{[r,s]}
&= \vec{E}^\alpha \mathcal{D}^{k-1}u\at{-1} + \vec{C}^\alpha \vec{\hat{u}}^{\alpha+1}_{[r-1,s+1]}.
\end{aligned}
\end{equation}
Here and in the following, it should be understood that negative indices should be replaced by $0$. Before proceeding, note that strictly speaking the matrices $\vec{C}^{\alpha}$ and $\vec{E}^{\alpha}$ depend on $r$ and $s$, but we do not write this explicitly to ease the notation. In particular,~\eqref{E:DifferentiationRule} implies that $\vec{E}^\alpha=\vec{0}$ if $r\geq 1$.

Expressions similar to~\eqref{E:FirstIter} can be built for all vectors $\vec{\hat{u}}^{\alpha+i}_{[r-i,s+i]}$, $i \in\{0,\,\hdots,\,k-\alpha-1\}$. After some algebra, it is therefore possible to write
\begin{equation}
\label{E:IntegrationLemmaEq1}
\vec{\hat{u}}^\alpha_{[r,s]} 
=\vec{B}^\alpha_{[r,s]} 
\mathcal{D}^{k-1}u\at{-1} 
+ \left(\prod_{i=0}^{k-\alpha-1}\!\!\vec{C}^{\alpha+i}\right)\!\vec{\hat{u}}^k_{[r-k+\alpha,s+k-\alpha]},
\end{equation}
where
\begin{equation*}
\vec{B}^\alpha_{[r,s]} \defeq  \vec{E}^\alpha + \vec{C}^\alpha\vec{E}^{\alpha+1}+ 
\cdots + 
\left(\prod_{i=0}^{k-\alpha-2}\vec{C}^{\alpha+i}\right)\vec{E}^{k-1}.
\end{equation*}
Note that, in light of~\eqref{E:DifferentiationRule}, all matrices $\vec{E}^{\alpha+i}$, $i \in\{0,\,\hdots,\,k-\alpha-1\}$, are zero if $r \geq k-\alpha$.
Since we have assumed that $s+k-\alpha \leq M$, the last term in~\eqref{E:IntegrationLemmaEq1} can be rewritten in terms of $\vec{\hat{u}}^k_{[0,M]}$ (recall that $r-k+\alpha$ is replaced by $0$ if it is negative). The proof is concluded by defining 
\begin{align}
\label{E:DrsDef}
\vec{D}^\alpha_{[r,s]} &\defeq \begin{bmatrix}
\vec{0}_{(s-r+1)\times(r-k+\alpha)}, && \displaystyle\prod_{i=0}^{k-\alpha-1}\vec{C}^{\alpha+i}, && \vec{0}_{(s-r+1)\times(M-s-k+\alpha)}
\end{bmatrix},
\end{align}
where the size of the zero matrices is indicated by subscripts.

\subsection{Proof of Lemma~\ref{T:BCLemma}}
\label{A:ProofBCLemma}

Recalling the definition of $\mathcal{B}^{k-1}u$, we only need to show that $\mathcal{D}^{k-1}u\at{1}$ can be expressed
as linear combination of the entries of $\vec{\check{u}}_M$. 
Applying the fundamental theorem of calculus as in Appendix~\ref{A:ProofIntegrationLemma}, it may be shown that
$\der{\alpha}u\at{1} 
=\der{\alpha}u\at{-1} + 2\hat{u}^{\alpha+1}_0$
for any $\alpha\in\{0,\,\hdots,\,k-1\}$. By Lemma~\ref{T:IntegrationLemma}, $\der{\alpha}u\at{1}$
can then be written as a linear combination of the entries of $\vec{\check{u}}_M$. Repeating this argument for all $\alpha\in\{0,\,\hdots,\,k-1\}$, we conclude the same for all entries of
$\mathcal{D}^{k-1}u\at{1}$, proving the existence of $\vec{G}_M$.

\subsection{Proof of Lemma~\ref{T:LemmaQ}}
\label{A:ProofLemmaQ}

\emph{(i)} Recall~\eqref{E:RemFunDef} and expand
\begin{equation*}
\mathcal{Q}^{\alpha\beta}_{uv} = \sum_{m=0}^{N_\alpha}\sum_{n=N_\beta+1}^{+\infty} \hat{u}^\alpha_m \hat{v}^\beta_n \int_{-1}^{1} f\,\Leg_m\,\Leg_n\,{\rm d}x 
+ \sum_{m=N_\alpha+1}^{\infty}\sum_{n=0}^{N_\beta} \hat{u}^\alpha_m \hat{v}^\beta_n \int_{-1}^{1} f\,\Leg_m\,\Leg_n\,{\rm d}x,
\end{equation*} 
where $N_\alpha = N+\alpha$ and $N_\beta = N+\beta$. Since $f$ is a polynomial of degree at most $d_F$, the product $f\mathcal{L}_m$ is a polynomial of degree at most $m+d_F$, so it is orthogonal to any Legendre polynomial $\mathcal{L}_n$ with $n>m+d_F$. In particular, it may be shown~\cite{Dougall1953}  that the integral $\int_{-1}^{1}f\Leg_n\Leg_m \dx$ vanishes if $\vert m-n\vert > d_F$. Using the short-hand notation $\overline{n}=n+1-d_F$, we can write 
\begin{equation}
\label{E:Qexpansion}
\mathcal{Q}^{\alpha\beta}_{uv} =
\begin{bmatrix}{\hat{u}}^\alpha_{\overline{N_\beta}} \\ \vdots \\ 
\hat{u}^\alpha_{N_\alpha}\end{bmatrix}^T
\vec{\Phi}{}_{[\overline{N_\beta},N_\alpha]}^{[N_\beta+1,N_\alpha+d_F]}
\begin{bmatrix}\hat{v}^\beta_{N_\beta+1} \\ \vdots \\ 
\hat{v}^\beta_{N_\alpha+d_F}\end{bmatrix}
+
\begin{bmatrix}{\hat{v}}^\beta_{\overline{N_\alpha}} \\ \vdots \\ 
\hat{v}^\beta_{N_\beta}\end{bmatrix}^T
\vec{\Phi}{}_{[\overline{N_\alpha},N_\beta]}^{[N_\alpha+1,N_\beta+d_F]}
\begin{bmatrix}\hat{u}^\alpha_{N_\alpha+1} \\ \vdots \\ 
\hat{u}^\alpha_{N_\beta+d_F}\end{bmatrix}.
\end{equation}
Note that we have assumed that $\alpha$, $\beta$ and $d_F$ are such that $1-d_F \leq 
\alpha-\beta \leq d_F - 1$,
so that the vectors in~\eqref{E:Qexpansion} are well-defined. If the left 
(resp. right) inequality is not satisfied, then the first 
(resp. second) term in~\eqref{E:Qexpansion} vanishes. 
Since $N_\alpha+d_F\leq M+\beta-k$ and $N_\beta+d_F\leq M+\alpha-k$, we can apply Lemma~\ref{T:IntegrationLemma}, and our assumption that $N \geq d_F + k -1$ guarantees that $\overline{N_\alpha}\geq k-\beta$ and $\overline{N_\beta}\geq k-\alpha$, so there is no dependence on the boundary values. Consequently, we can find a matrix $\vec{Q}(\vec{\gamma})$ such that
\begin{equation}
\label{E:QuvabQuadForm}
\mathcal{Q}^{\alpha\beta}_{uv} = \left(\vec{\hat{u}}^k_{[0,M]}\right)^T
\, \vec{Q}(\vec{\gamma}) \, \vec{\hat{v}}^k_{[0,M]}.
\end{equation} 
The matrix $\vec{Q}^{\alpha\beta}_{uv}$ is found using~\eqref{E:DeflMatDef} after taking the symmetric part of the right-hand side of~\eqref{E:QuvabQuadForm}.

\emph{(ii)} Let
$
\vec{\nu}=\begin{bmatrix}
\hat{u}^k_{M+1},\, \hdots,\, \hat{u}^k_{M+d_F},\,
\hat{v}^k_{M+1},\, \hdots,\, \hat{v}^k_{M+d_F}\end{bmatrix}^T.
$
After replacing $N_\alpha$ and $N_\beta$ with $M$ in~\eqref{E:Qexpansion}, it may be verified using~\eqref{E:DeflMatDef} that
$
\mathcal{Q}^{kk}_{uv} = 2\,{\vec{\psi}_M}^T \,{\vec{L}_{\overline{M}}}^T \vec{Y} \vec{\nu}.
$
By~\eqref{E:AuxiliaryLMI},
\begin{align*}
0 &\leq \begin{bmatrix}
\vec{L}_{\overline{M}} {\vec{\psi}_M}\\ \vec{\nu}
\end{bmatrix}^T
\begin{bmatrix}
\vec{Q}^{kk}_{uv} & \vec{Y} \\
\vec{Y}^T & \vec{\Sigma}^{kk}_{uv}\otimes\vec{\Delta}
\end{bmatrix}
\begin{bmatrix}
\vec{L}_{\overline{M}} {\vec{\psi}_M}\\ \vec{\nu}
\end{bmatrix} 
\notag \\
&= 
{\vec{\psi}_M}^T \left({\vec{L}_{\overline{M}}}^T\vec{Q}^{kk}_{uv} \vec{L}_{\overline{M}}\right) {\vec{\psi}_M}
+ \vec{\nu}^T \left(\vec{\Sigma}^{kk}_{uv}\otimes\vec{\Delta}\right) \vec{\nu} + \mathcal{Q}^{kk}_{uv}.
\end{align*}
Now, $\vec{\Sigma}^{kk}_{uv}$ is a diagonal matrix by assumption. Recalling the definition of $\vec{\Delta}$, $\vec{\nu}$, and rearranging we arrive at
\begin{equation}
\label{E:LowerBoundQkk}
\mathcal{Q}^{kk}_{uv} 
\geq 
-{\vec{\psi}_M}^T \left({\vec{L}_{\overline{M}}}^T\vec{Q}^{kk}_{uv} \vec{L}_{\overline{M}}\right) {\vec{\psi}_M}
-(\vec{\Sigma}^{kk}_{uv})_{11} 
\sum_{n=M+1}^{M+d_F}\frac{2\vert\hat{u}^k_n\vert^2}{2n+1} 
-(\vec{\Sigma}^{kk}_{uv})_{22} 
\sum_{n=M+1}^{M+d_F}\frac{2\vert\hat{v}^k_n\vert^2}{2n+1}.
\end{equation}
Recognizing from~\eqref{E:LegExpNorm} that the two sums in~\eqref{E:LowerBoundQkk} can be bounded
by $\norm{U^k_M}_2^2$ and $\norm{V^K_M}_2^2$ we obtain~\eqref{E:LemmaQEstimate}.

\subsection{Proof of Lemma~\ref{T:LemmaR}}
\label{A:ProofLemmaR}

We start by determining an upper bound on $\|U^\alpha_{N_\alpha}\|^2_{2}$ in 
terms of the vector $\vec{\hat{u}}^k_{[0,M]}$ and $\|U^k_M\|^2_{2}$ (similar bounds can be found for 
$V^\beta_{N_\beta}$).  Recalling~\eqref{E:RemFunDef},~\eqref{E:Ncondition} and~\eqref{E:Mdef}, we can write
\begin{align}
\label{E:EstimateNorm1}
\frac{1}{2}\norm{U^\alpha_{N_\alpha}}^2_{2} 
&= \sum_{n=N_\alpha+1}^{M-k+\alpha} \frac{\left(\hat{u}^\alpha_n\right)^2}{2n+1} 
+ \sum_{n=M-k+\alpha+1}^{+\infty} \frac{\left(\hat{u}^\alpha_n\right)^2}{2n+1}
\notag \\
&= \left(\vec{\hat{u}}^k_{[0,M]}\right)^T \,\vec{H}_\alpha \,\vec{\hat{u}}^k_{[0,M]} +\sum_{n=M-k+\alpha+1}^{+\infty} 
\frac{\left(\hat{u}^\alpha_n\right)^2}{2n+1},
\end{align}
where the matrix $\vec{H}_\alpha$ can be obtained from
Lemma~\ref{T:IntegrationLemma}. In particular, we note that~\eqref{E:DifferentiationRule_others} is applied $k-a$ times to $(\hat{u}^\alpha_n)^2$ to compute $\vec{H}_\alpha$, and since $n > N_\alpha \geq N$ it may be verified that $\norm{\vec{H}_\alpha}_F\sim N^{-2(k-\alpha)-1}$.

When $\alpha=k$, the last term in~\eqref{E:EstimateNorm1} is $\norm{U^k_M}_2^2/2$, so
\begin{equation}
\label{E:AlphaEqK}
\frac{1}{2}\norm{U^k_{N_k}}^2_{2} = 
\left( \vec{\hat{u}}^k_{[0,M]} \right)^T\, \vec{H}_k \,\vec{\hat{u}}^k_{[0,M]}
+
\frac{1}{2}\norm{U^k_M}_2^2.
\end{equation}

When $\alpha\leq k-1$, instead, we define
\begin{equation}
\label{E:OmegaDef}
\omega_{\eta} = \frac{4}{[2(M-k+\eta)+1][2(M-k+\eta)+5]}
\end{equation}
for $\eta \in \{0,\,\hdots,\,k-1\}$ and use~\eqref{E:DifferentiationRule}, the 
elementary inequality $(a-b)^2\leq 2(a^2+b^2)$, and appropriate changes of indices to show 
\begin{align}
\label{E:EstimateSumTerm}
\sum_{n=M-k+\alpha+1}^{+\infty} 
\frac{ \left(\hat{u}^{\alpha}_{n}\right)^2 }{ 2n+1 }
&\leq 
\sum_{n=M-k+\alpha+1}^{+\infty} 
\frac{2}{2n+1} \left[ 
\frac{\vert\hat{u}^{\alpha+1}_{n-1}\vert^2}{(2n-1)^2} 
+ \frac{\vert\hat{u}^{\alpha+1}_{n+1}\vert^2}{(2n+3)^2}
\right]
\notag\\
&\leq
\sum_{n=M-k+\alpha}^{M-k+\alpha+1} 
\frac{2\vert\hat{u}^{\alpha+1}_{n}\vert^2}{(2n+3)(2n+1)^2} 
+
\sum_{n=M-k+\alpha+2}^{\infty} 
\frac{4\vert\hat{u}^{\alpha+1}_{n}\vert^2}{(2n-1)(2n+1)(2n+3)} 
\notag\\
&\leq
\sum_{n=M-k+\alpha}^{M-k+\alpha+1} 
\frac{2\vert\hat{u}^{\alpha+1}_{n}\vert^2}{(2n+3)(2n+1)^2} 
+\omega_{\alpha+1} \sum_{n=M-k+\alpha+2}^{+\infty} 
\frac{\vert\hat{u}^{\alpha+1}_{n}\vert^2}{2n+1}.
\end{align}
Applying Lemma~\ref{T:IntegrationLemma} to the first term on the 
right-hand side of~\eqref{E:EstimateSumTerm} and substituting back into~\eqref{E:EstimateNorm1}, we 
can construct a matrix $\vec{T}_\alpha$ such that
\begin{equation}
\label{E:Estimate1AppendixC}
\frac{1}{2}\norm{U^\alpha_{N_\alpha}}^2_{2} 
\leq 
\left( \vec{\hat{u}}^k_{[0,M]} \right)^T\, \vec{T}_{\alpha}
\,\vec{\hat{u}}^k_{[0,M]}
+ \omega_{\alpha+1}\sum_{n=M-k+\alpha+2}^{+\infty} 
\frac{\vert\hat{u}^{\alpha+1}_{n}\vert^2}{2n+1}.
\end{equation}
As for $\vec{H}_\alpha$, it may be verified that $\norm{\vec{T}_\alpha}_F\sim N^{-2(k-\alpha)-1}$.

Similar estimates can be carried out for the infinite sum on the right-hand side 
of~\eqref{E:Estimate1AppendixC}. By recursion, we can eventually construct a matrix 
$\vec{Z}_\alpha$ and a constant $\lambda_\alpha$ such that
\begin{equation}
\label{E:FinalEstimate_utilde}
\begin{aligned}
\frac{1}{2}\norm{U^\alpha_{N_\alpha}}^2_{2}
\leq \left( \vec{\hat{u}}^k_{[0,M]} \right)^T \, \vec{Z}_\alpha
\, \vec{\hat{u}}^k_{[0,M]}
+\lambda_\alpha
\norm{U^k_M}^2_{2}.
\end{aligned}
\end{equation}
Note that $\norm{\vec{Z}_\alpha }_F \sim {N^{-2(k-\alpha)-1}}$, while $\lambda_\alpha \sim N^{-2(k-\alpha)}$ since every recursion step introduces a 
factor of $N^{-2}$ according to~\eqref{E:OmegaDef}. Moreover, the right-hand side of~\eqref{E:FinalEstimate_utilde} has the same form 
as~\eqref{E:AlphaEqK}, so for the rest of this section we will not distinguish the cases $\alpha\leq k-1$ and 
$\alpha=k$.

The estimate~\eqref{E:FinalEstimate_utilde} can be used in conjunction with Young's inequality and~\eqref{E:DeflMatDef} to show that for any $\varepsilon>0$ we can bound
\begin{equation*}
\vert \mathcal{R}^{\alpha\beta}_{uv} \vert
\leq 
\norm{f}_\infty 
{\vec{\psi}_M}^T \left(
 {\vec{L}_0}^T
\begin{bmatrix}
\varepsilon \vec{Z}_\alpha  & \vec{0} \\ 
\vec{0} & \frac{1}{\varepsilon}
\vec{Z}_\beta 
\end{bmatrix}
\vec{L}_0 
\right) {\vec{\psi}_M}^T
+ \norm{f}_\infty 
\left( \varepsilon\lambda_\alpha\norm{U_k}^2_{2} + 
\frac{\lambda_\beta }{\varepsilon}
\norm{V_k}^2_{2}\right).
\end{equation*}
We now set $\varepsilon=(N+1)^{\beta-\alpha}$, so
$\varepsilon\lambda_\alpha \sim \varepsilon^{-1}\lambda_\beta \sim 
{N^{\alpha+\beta-2k}}$ and
$\norm{\varepsilon\vec{Z}_\alpha}_F \sim \norm{\varepsilon^{-1}\vec{Z}_\beta}_F 
\sim N^{\alpha+\beta-2k-1}$, and let
\begin{align*}
\vec{R}^{\alpha\beta}_{uv} &\defeq	
{\vec{L}_0}^T
\begin{bmatrix}
\varepsilon \vec{Z}_\alpha  & \vec{0} \\ 
\vec{0} & \frac{1}{\varepsilon}
\vec{Z}_\beta 
\end{bmatrix}
\vec{L}_0 ,
&
\vec{\Sigma}^{\alpha\beta}_{uv} &\defeq
\begin{bmatrix}
\varepsilon\lambda_\alpha & 0 \\
0 &\varepsilon^{-1}\lambda_\beta
\end{bmatrix}.
\end{align*}
Recalling that $\left\vert\mathcal{L}_n(x) \right\vert\leq 1$ for all $n\geq 0$~\cite{Jackson1930}, equation~\eqref{E:Restimate} then follows from the estimate
\begin{equation*}
\norm{f}_\infty 
=\sup_{x\in [-1,1]} 
\left\vert \sum_{n=0}^{p} \hat{f}_n(\gamma) \mathcal{L}_n(x) \right\vert
\leq  \sum_{n=0}^{p} \left\vert\hat{f}_n(\gamma)\right\vert 
= \norm{\vec{\hat{f}}(\gamma)}_1.
\end{equation*}

\bibliographystyle{plain}
\bibliography{references}

\begin{thebibliography}{10}

\bibitem{Agarwal2009}
R.~P. Agarwal and D.~O'Regan.
\newblock {\em {Ordinary and Partial Differential Equations, With Special
  Functions, Fourier Series and Boundary Value Problems}}.
\newblock Universitext. Springer-Verlag New York, 2009.

\bibitem{Ahmadi2014}
M.~Ahmadi, G.~Valmorbida, and A.~Papachristodoulou.
\newblock {Input-Output Analysis of Distributed Parameter Systems Using Convex
  Optimization}.
\newblock In {\em IEEE 53rd Annu. Conf. Decis. Control (CDC), 2014}, pages 4310
  -- 4315, Los Angeles, USA, 2014. IEEE.

\bibitem{Ahmadi2015}
M.~Ahmadi, G.~Valmorbida, and A.~Papachristodoulou.
\newblock {A Convex Approach to Hydrodynamic Analysis}.
\newblock In {\em Proc. 54th IEEE Conf. Decis. Control}, pages 7262--7267,
  Osaka, Japan, 2015.

\bibitem{Ahmadi2016}
M.~Ahmadi, G.~Valmorbida, and A.~Papachristodoulou.
\newblock {Dissipation inequalities for the analysis of a class of PDEs}.
\newblock {\em Automatica}, 66:163--171, 2016.

\bibitem{Andersen2009}
E.~D. Andersen, B.~Jensen, J.~Jensen, and R.~Sandvik.
\newblock {MOSEK version 6 . MOSEK Technical report : TR-2009-3}.
\newblock Technical report, MOSEK ApS, Fruebjergvej 3 Box 16, 2100 ,
  Copenhagen, Denmark, Copenhagen, 2009.

\bibitem{Bertsimas2006}
D.~Bertsimas and C.~Caramanis.
\newblock {Bounds on linear PDEs via semidefinite optimization}.
\newblock {\em Math. Program. Ser. A}, 108(1):135--158, 2006.

\bibitem{Boyd2004}
S.~Boyd and L.~Vandenberghe.
\newblock {\em {Convex Optimization}}.
\newblock Cambridge University Press, 2004.

\bibitem{Constantin1995}
P.~Constantin and C.~R. Doering.
\newblock {Variational bounds in dissipative systems}.
\newblock {\em Phys. D Nonlinear Phenom.}, 82(3):221--228, 1995.

\bibitem{Constantin1995a}
P.~Constantin and C.~R. Doering.
\newblock {Variational bounds on energy dissipation in incompressible flows.
  II. Channel flow}.
\newblock {\em Phys. Rev. E}, 51(4):3192--3198, 1995.

\bibitem{Courant1953}
R.~Courant and D.~Hilbert.
\newblock {\em {Methods of Mathematical Physics}}, volume~1.
\newblock Interscience Publisher Inc., New York, 1st edition, 1953.

\bibitem{Doering1992}
C.~R. Doering and P.~Constantin.
\newblock {Energy dissipation in shear driven turbulence}.
\newblock {\em Phys. Rev. Lett.}, 69(11):1648--1651, 1992.

\bibitem{Doering1994}
C.~R. Doering and P.~Constantin.
\newblock {Variational bounds on energy dissipation in incompressible flows:
  Shear flow}.
\newblock {\em Phys. Rev. E}, 49(5):4087--4099, 1994.

\bibitem{Doering1996}
C.~R. Doering and P.~Constantin.
\newblock {Variational bounds on energy dissipation in incompressible flows.
  III. Convection}.
\newblock {\em Phys. Rev. E}, 53(6):5957--5981, may 1996.

\bibitem{Doering1997}
C.~R. Doering and J.~M. Hyman.
\newblock {Energy stability bounds on convective heat transport: Numerical
  study}.
\newblock {\em Phys. Rev. E}, 55(6):7775--7778, 1997.

\bibitem{Dougall1953}
J.~Dougall.
\newblock {The product of two Legendre polynomials}.
\newblock {\em Proc. Glas. Math. Assoc.}, 1(3):121--125, 1953.

\bibitem{Fantuzzi2015}
G.~Fantuzzi and A.~Wynn.
\newblock {Construction of an optimal background profile for the
  Kuramoto--Sivashinsky equation using semidefinite programming}.
\newblock {\em Phys. Lett. A}, 379(1-2):23--32, jan 2015.

\bibitem{Fantuzzi2016}
G.~Fantuzzi and A.~Wynn.
\newblock {Optimal bounds with semidefinite programming: An application to
  stress driven shear flows}.
\newblock {\em Phys. Rev. E}, 93(4):{043308}, 2016.

\bibitem{Giaquinta2004}
M.~Giaquinta and S.~Hildebrandt.
\newblock {\em {Calculus of Variations I}}, volume 310 of {\em Grundlehren der
  mathematischen Wissenschaften}.
\newblock Springer Berlin Heidelberg, 1996.

\bibitem{Hagstrom2014}
G.~I. Hagstrom and C.~R. Doering.
\newblock {Bounds on Surface Stress-Driven Shear Flow}.
\newblock {\em J. Nonlinear Sci.}, 24(1):185--199, 2014.

\bibitem{Huang2015a}
D.~Huang and S.~I. Chernyshenko.
\newblock {Low-order state-feedback controller design for long-time average
  cost control of fluid flow systems: A sum-of-squares approach}.
\newblock In {\em Proc. 34th Chinese Control Conf.}, pages 2479--2484,
  Hangzhou, China, 2015.

\bibitem{Huang2015}
D.~Huang, S.~I. Chernyshenko, D.~Lasagna, and O.~R. Tutty.
\newblock {Long-time Average Cost Control of Polynomial Systems : A Sum of
  Squares Approach}.
\newblock In {\em Proc. 2015 Eur. Control Conf.}, pages 3244--3249, Linz,
  Austria, 2015.

\bibitem{Jackson1930}
D.~Jackson.
\newblock {\em {The Theory of Approximation}}, volume~11 of {\em American
  Mathematical Society Colloquium Publications}.
\newblock American Mathematical Society, New York, 1930.

\bibitem{Lasagna2016a}
D.~Lasagna, D.~Huang, O.~R. Tutty, and S.~I. Chernyshenko.
\newblock {A Sum-of-Squares approach to feedback control of laminar wake
  flows}.
\newblock {\em J. Fluid Mech.}, 809:628--663, 2016.

\bibitem{Lasagna2016}
D.~Lasagna, D.~Huang, O.~R. Tutty, and S.~I. Chernyshenko.
\newblock {Controlling fluid flows with positive polynomials}.
\newblock In {\em Proc. 35th Chinese Control Conf.}, pages 1301--1306, Chengdu,
  China, 2016.

\bibitem{Lofberg2004}
J.~L\"ofberg.
\newblock {YALMIP: A toolbox for modeling and optimization in MATLAB}.
\newblock In {\em IEEE Int. Symp. Comput. Aided Control Syst. Des.}, pages 284
  -- 289, Taipei, Taiwan, 2004.

\bibitem{Lofberg2009}
J.~L\"ofberg.
\newblock {Pre-and post-processing sum-of-squares programs in practice}.
\newblock {\em IEEE Trans. Automat. Contr.}, 54(5):1007--1011, 2009.

\bibitem{ODonoghue2016}
B.~O'Donoghue, E.~Chu, N.~Parikh, and S.~Boyd.
\newblock {Conic Optimization via Operator Splitting and Homogeneous Self-Dual
  Embedding}.
\newblock {\em J. Optim. Theory Appl.}, 169(3):1--27, 2016.

\bibitem{Papachristodoulou2006}
A.~Papachristodoulou and M.~M. Peet.
\newblock {On the Analysis of Systems Described by Classes of Partial
  Differential Equations}.
\newblock {\em Proc. 45th IEEE Conf. Decis. Control}, pages 747--752, 2006.

\bibitem{Parrilo2003}
P.~A. Parrilo.
\newblock {Semidefinite programming relaxations for semialgebraic problems}.
\newblock {\em Math. Program. Ser. B}, 96(2):293--320, 2003.

\bibitem{Peet2007}
M.~M. Peet and P.-A. Bliman.
\newblock {An Extension of the Weierstrass Theorem to Linear Varieties:
  Application to Delayed Systems}.
\newblock In {\em 7th IFAC Work. Time-Delay Syst.}, pages 1--4, Nantes, France,
  2007.

\bibitem{Straughan2004}
B.~Straughan.
\newblock {\em {The Energy Method, Stability, and Nonlinear Convection}},
  volume~91 of {\em Applied Mathematical Sciences}.
\newblock Springer-Verlag New York, 2 edition, 2004.

\bibitem{Tan2006a}
W.~Tan and A.~Packard.
\newblock {Stability region analysis using sum of squares programming}.
\newblock In {\em Am. Control Conf.}, pages 2297--2302, Minneapolis, USA, 2006.

\bibitem{Tang2004}
W.~Tang, C.~P. Caulfield, and W.~R. Young.
\newblock {Bounds on dissipation in stress-driven flow}.
\newblock {\em J. Fluid Mech.}, 510:333--352, 2004.

\bibitem{Tutuncu2003}
R.~H. T{\"{u}}t{\"{u}}nc{\"{u}}, K.~C. Toh, and M.~J. Todd.
\newblock {Solving semidefinite-quadratic-linear programs using SDPT3}.
\newblock {\em Math. Program. Ser. B}, 95(2):189--217, 2003.

\bibitem{Valmorbida2014}
G.~Valmorbida, M.~Ahmadi, and A.~Papachristodoulou.
\newblock {Semi-definite programming and functional inequalities for
  Distributed Parameter Systems}.
\newblock In {\em 53rd IEEE Conf. Decis. Control}, pages 4304 -- 4309, Los
  Angeles, CA, 2014.

\bibitem{Valmorbida2014a}
G.~Valmorbida, M.~Ahmadi, and A.~Papachristodoulou.
\newblock {Semi-definite programming and functional inequalities for
  Distributed Parameter Systems}.
\newblock {arXiv:1403.6882 [mathOC]}, 2014.

\bibitem{Valmorbida2016}
G.~Valmorbida, M.~Ahmadi, and A.~Papachristodoulou.
\newblock {Stability Analysis for a Class of Partial Differential Equations via
  Semidefinite Programming}.
\newblock {\em IEEE Trans. Automat. Contr.}, 61(6):1649--1654, 2016.

\bibitem{Valmorbida2015}
G.~Valmorbida and A.~Papachristodoulou.
\newblock {Introducing INTSOSTOOLS: A SOSTOOLS plug-in for integral
  inequalities}.
\newblock In {\em Proc. 2015 Eur. Control Conf.}, pages 1231--1236, Linz,
  Austria, 2015.

\bibitem{Wen2013}
B.~Wen, G.~P. Chini, N.~Dianati, and C.~R. Doering.
\newblock {Computational approaches to aspect-ratio-dependent upper bounds and
  heat flux in porous medium convection}.
\newblock {\em Phys. Lett. A}, 377(41):2931--2938, dec 2013.

\bibitem{Wen2015a}
B.~Wen, G.~P. Chini, R.~R. Kerswell, and C.~R. Doering.
\newblock {Time-stepping approach for solving upper-bound problems: Application
  to two-dimensional Rayleigh-B{\'{e}}nard convection}.
\newblock {\em Phys. Rev. E}, 92(4):{043012}, 2015.

\bibitem{Zeidler1995}
E.~Zeidler.
\newblock {\em {Applied Functional Analysis - Applications to Mathematical
  Physics}}, volume 108 of {\em Applied Mathematical Sciences}.
\newblock Springer-Verlag, New York, 1st edition, 1995.

\bibitem{Zheng2016b}
Y.~Zheng, G.~Fantuzzi, A.~Papachristodoulou, P.~Goulart, and A.~Wynn.
\newblock {Fast ADMM for homogeneous self-dual embeddings of sparse SDPs}.
\newblock {arXiv:1611.01828v1 [math.OC]}, 2016.

\bibitem{Zheng2016a}
Y.~Zheng, G.~Fantuzzi, A.~Papachristodoulou, P.~J. Goulart, and A.~Wynn.
\newblock {Fast ADMM for Semidefinite Programs with Chordal Sparsity}.
\newblock {arXiv:1609.06068v1 [math.OC]}, 2016.

\end{thebibliography}

\end{document}